\documentclass[a4paper]{article}

%
%

\newlabel{thetadefeig}{{5.8}{Theorem}}
\newlabel{Xidefeig}{{5.11}{Lemma}}
\newlabel{einbett2pre}{{4.8}{Lemma}}
\newlabel{muldefb1}{{4.1}{Definition}}
\newlabel{FdefklM}{{4.11}{Definition}}

\usepackage{amsmath}
\usepackage{amssymb}
\usepackage{amsthm}
\usepackage{dsfont}
\usepackage{enumerate}
\usepackage{pifont}
\usepackage{theoremref}
\usepackage{tikz}
\usetikzlibrary{trees}
\usetikzlibrary{decorations.markings}

%
%

\setlength{\marginparsep}{5mm}
\setlength{\marginparwidth}{2cm}

\sloppy

\newcommand{\mc}[1]{{\mathcal{#1}}}
\newcommand{\mf}[1]{{\mathfrak{#1}}}
\newcommand{\bb}[1]{{\mathbb{#1}}}

\DeclareMathOperator{\RE}{Re}
\DeclareMathOperator{\IM}{Im}
\renewcommand{\Re}{\RE}
\renewcommand{\Im}{\IM}

\DeclareSymbolFont{largesymbolsA}{U}{txexa}{m}{n}
\DeclareMathSymbol{\varprod}{\mathop}{largesymbolsA}{16}

\DeclareMathOperator{\dom}{dom}
\DeclareMathOperator{\mul}{mul}
\DeclareMathOperator{\ran}{ran}



%
%

\newlength{\maxlabwidth}

%
%

\numberwithin{equation}{section}
\swapnumbers
\theoremstyle{plain}
	\newtheorem{lemma}{Lemma}[section]
	\newtheorem{proposition}[lemma]{Proposition}
	\newtheorem{theorem}[lemma]{Theorem}
	\newtheorem{corollary}[lemma]{Corollary}
	\newtheorem{ntheoreM}[lemma]{}
\theoremstyle{definition}
	\newtheorem{definitioN}[lemma]{Definition}
\theoremstyle{remark}
	\newtheorem{remarK}[lemma]{Remark}
	\newtheorem{examplE}[lemma]{Example}
	\newtheorem{nremarK}[lemma]{}
\newcommand{\thlab}[1]{\thlabel{#1}\label{#1.}}
\renewcommand{\qedsymbol}{\raisebox{-2pt}{\large\ding{113}}}
\newcommand{\defendsymbol}{$\lozenge$}
\newcommand{\qedsymbolsave}{\qedsymbol}

\newenvironment{definition}{\begin{definitioN}}{
	\renewcommand{\qedsymbolsave}{\qedsymbol}\renewcommand{\qedsymbol}{\defendsymbol}
	\popQED{\qed}\renewcommand{\qedsymbol}{\qedsymbolsave}\end{definitioN}}
\newenvironment{remark}{\begin{remarK}}{
	\renewcommand{\qedsymbolsave}{\qedsymbol}\renewcommand{\qedsymbol}{\defendsymbol}
	\popQED{\qed}\renewcommand{\qedsymbol}{\qedsymbolsave}\end{remarK}}
\newenvironment{example}{\begin{examplE}}{
	\renewcommand{\qedsymbolsave}{\qedsymbol}\renewcommand{\qedsymbol}{\defendsymbol}
	\popQED{\qed}\renewcommand{\qedsymbol}{\qedsymbolsave}\end{examplE}}

\newcommand{\bibi}[5]{\bibitem[#5]{#1} \textsc{#2}:\ \textit{#3,}\ {#4.} }

%
%



\begin{document}
\begin{flushleft}
	{\Large\bf Definitizability of normal operators on Krein spaces and their functional calculus}
	\\[5mm]
	\textsc{Michael Kaltenb\"ack\footnote{This work was supported by a joint project of the Austrian Science Fund 
		  (FWF, I1536--N25) and the Russian Foundation for Basic Research (RFBR, 13-01-91002-ANF).}}
	\\[6mm]
	{\small
	\textbf{Abstract: We discuss a new concept of definitizability of a normal 
		operator on Krein spaces. For this new concept we develop
		a functional calculus $\phi \mapsto \phi(N)$ which is the 
		proper analogue of $\phi \mapsto \int \phi \, dE$ in the Hilbert space situation.}
	}
\end{flushleft}

\begin{flushleft}
   {\small
   {\bf Mathematics Subject Classification (2010):} 47A60, 47B50, 47B15. 
   }
\end{flushleft}
\begin{flushleft}
   {\small
   {\bf Keywords:} Krein space, definitizable operators, normal operators, spectral theorem
   }
\end{flushleft}

\section{Introduction}

A bounded linear operator $N$ on a Krein space $(\mc K,[.,.])$ is normal, if
$N$ commutes with its Krein space adjoint $N^+$. If we write $N$ as $A+iB$
with the selfadjoint real part $A:=\Re N:=\frac{N+N^+}{2}$ and the selfadjoint
imaginary part $B:=\Im N:=\frac{N-N^+}{2i}$, then $N$ is normal if and only if $AB=BA$.
In \cite{Ka2015} we called a normal $N$ definitizable
whenever $A$ and $B$ were both definitizable in the classical sense,
i.e.\ there exist so-called definitizing polynomials $p(z),q(z)\in \bb R[z]\setminus\{0\}$ such that 
$[p(A)x,x] \geq 0$ and $[q(B)x,x] \geq 0$ for all $x\in \mc K$. 

For such definitizable operators in \cite{Ka2015} we could build a functional calculus
in analogy to the functional calculus $\phi\mapsto \int \phi \, dE$ 
mapping the $*$-algebra of bounded and measurable functions on $\sigma(N)$ 
to $B(\mc H)$ in the Hilbert space case. The functional calculus in \cite{Ka2015} can also be seen as
a generalization of Heinz Langers spectral theorem on definitizable selfadjoint operators
on Krein spaces; see \cite{langer1982}, \cite{KaPr2014}.
Unfortunately, there are unsatisfactory phenomenons with this concept of definitizability in \cite{Ka2015}.  
For example, it is not clear, whether for a bijective, normal definitizable $N$ also  
$N^{-1}$ definitizable.

In the present paper we choose a more general concept of definitizability. We shall say that a normal
$N$ on a Krein space $\mc K$ is definitizable if $[p(A,B)u,u] \geq 0$ for all $u \in \mc K$
for some, so-called definitizing, $p(x,y)\in\bb C[x,y]\setminus\{0\}$ with real coefficients.
Then we study the ideal $I$ generated by all definitizing polynomials with real coefficients in $\bb C[x,y]$, and 
assume that $I$ is large in the sense that it is zero-dimensional, i.e.\ $\dim \bb C[x,y]/I<\infty$.
By the way, if $N$ is definitizable in the sense of \cite{Ka2015}, then $I$ is always zero-dimensional.

Using results from algebraic geometry, under the assumption that $I$ is zero-dimensional, the 
variety $V(I)=\{a\in \bb C^2: f(a)=0 \ \text{ for all } \ f \in I\}$ is a finite set. We split
this subset of $\bb C^2$ up as
\[
    V(I) = (V(I)\cap \bb R^2) \dot \cup (V(I)\setminus \bb R^2) \,,
\]
and interpret $V_{\bb R}(I):=V(I)\cap \bb R^2$ in the following as a subset of $\bb C$ by
consider the first entry as the real and the second entry as the imaginary part.

By the ascending chain condition the ideal $I$ is generated by real definitizing polynomials
$p_1,\dots,p_m$. With the help of the positive semidefinite scalar products 
$[p_j(A,B).,.]$, $j=1,\dots,m$ and $\sum_{k=1}^m [p_k(A,B).,.]$ we construct Hilbert spaces 
$\mc H_j$, $j=1,\dots,m$ and $\mc H$ together with bounded and injective
$T_j: \mc H_j \to \mc K$ and $T: \mc H \to \mc K$. We consider
$\Theta_j: (T_jT_j^+)' \to (T_j^+T_j)'$ and $\Theta: (TT^+)' \to (T^+T)'$ by 
$\Theta_j(C):= (T_j\times T_j)^{-1}(C)$ and $\Theta(C):= (T\times T)^{-1}(C)$, as studied in \cite{KaPr2014}.
Here $(T_jT_j^+)', (TT^+)'\subseteq B(\mc K)$ and $(T_j^+T_j)'\subseteq B(\mc H_j), (T^+T)'\subseteq B(\mc H)$ denote 
the commutant of the respective operators.

The proper family $\mc F_N$ of functions suitable for the aimed functional 
calculus are functions defined on
\[
    \big(\sigma(\Theta(N)) \cup V_{\bb R}(I) \big) \dot\cup (V(I)\setminus \bb R^2) \,.
\]
Moreover, the functions $\phi\in \mc F_N$ assume values in $\bb C$ on 
$\sigma(\Theta(N)) \setminus V_{\bb R}(I)$ and values in a certain finite dimensional 
$*$-algebras $\mc A(z)$ at $z\in V_{\bb R}(I)$ and $\mc B((\xi,\eta))$ at $(\xi,\eta)\in V(I)\setminus \bb R^2$.
On $\sigma(\Theta(N)) \setminus V_{\bb R}(I)$ we assume $\phi$ to be bounded and measurable.
Finally, $\phi\in \mc F_N$ satisfies a growth regularity condition at all
$w$ points from $V_{\bb R}(I)$ which are not isolated in
$\sigma(\Theta(N)) \cup V_{\bb R}(I)$.
Vaguely speaking, this growth regularity condition means that around 
$w$ the function $\phi$ admits an approximation by a Taylor polynomial, which is determined by 
$\phi(w)\in \mc A(w)$.
Any polynomial $s(x,y)\in \bb C[x,y]$ can be seen as a function $s_N\in \mc F_N$ in a natural way.

For each $\phi\in\mc F_N$ we will see that there exists $p\in\bb C[x,y]$ and bounded, measurable
$f_1,\dots,f_m: \sigma(\Theta(N)) \cup V_{\bb R}(I) \to \bb C$ with 
$f_j(z)=0$ for $z\in V_{\bb R}(I)$ such that
\begin{equation}\label{decompact2}
  \phi(z) = p_N(z) + \sum_j f_j(z) \, (p_j)_N(z) 
\end{equation}
for all $z\in \sigma(\Theta(N)) \cup V_{\bb R}(I)$, and that
$\phi((\xi,\eta)) = p_N((\xi,\eta))$ for all $(\xi,\eta) \in V(I)\setminus \bb R^2$.
We then define 
\[
    \phi(N):=p(A,B) + \sum_{k=1}^m T_k \int_{\sigma(\Theta_k(N))} f_k \, dE \, T_k^+ \,, 
\]
and show that
this operator does not depend on the actual decomposition \eqref{decompact2} and 
that $\phi \mapsto \phi(N)$ is indeed a $*$-homomorphism satisfying $\phi(N) = s(A,B)$
for $\phi=s_N$.

\section{Multiple embeddings}

In the present section $(\mc K,[.,.])$ will be a Krein space and
$(\mc H,(.,.))$, $(\mc H_j,(.,.)), \ j=1,\dots,m$, will denote Hilbert spaces.
Moreover, let $T: \mc H \to \mc K$, $T_j: \mc H_j \to \mc K$ and
$R_j: \mc H_j \to \mc H$ bounded, linear and injective mappings such that $TR_j=T_j$.
By $T^+: \mc K \to \mc H$ and $T_j^+: \mc K \to \mc H_j$ we denote the respective Krein space adjoints.

If $D$ is an operator on a Krein space, then we shall denote by $D'$ the commutant
of $D$, i.e.\ the algebra of all operators commuting with $D$. For a selfadjoint $D$
this commutant is a $*$-algebra with respect to forming adjoint operators.

For $j=1,\dots,m$ we shall denote by $\Theta_j: (T_jT_j^+)' \ (\subseteq B(\mc K)) \to (T_j^+T_j)' \ (\subseteq B(\mc H_j))$, and
by $\Theta: (TT^+)' \ (\subseteq B(\mc K)) \to (T^+T)' \ (\subseteq B(\mc H))$ the
$*$-algebra homomorphisms mapping the identity operator to the identity operator 
as in \thref{thetadefeig} from \cite{KaPr2014} corresponding to the mappings $T_j$ and $T$:
\[
    \Theta_j(C_j) = (T_j\times T_j)^{-1}(C_j) = T_j^{-1}C_jT_j, \ \ C_j\in (T_jT_j^+)' \,,
\]    
\begin{equation}\label{thetaVdef}
    \Theta(C) = (T\times T)^{-1}(C) = T^{-1}CT, \ \ C\in (TT^+)' \,.
\end{equation}
We can apply \thref{thetadefeig} in \cite{KaPr2014} also to the bounded linear, injective  
$R_j: \mc H_j \to \mc H$, and denote the corresponding $*$-algebra homomorphisms
by $\Gamma_j : (R_jR_j^*)' \ (\subseteq B(\mc H)) \to (R_j^*R_j)' \ (\subseteq B(\mc H_j))$:
\[
    \Gamma_j(D) = (R_j\times R_j)^{-1}(D) = R_j^{-1}DR_j, \ \ D \in (R_jR_j^*)' \,.
\]
For the following note that due to $(\ran T^+)^{[\bot]} = \ker T = \{0\}$ the range of $T^+$ is dense in $\mc H$.

\begin{lemma}\thlab{comreg}
    For $j=1,\dots,m$ we have 
    $\Theta((T_jT_j^+)'\cap (TT^+)') \subseteq (R_jR_j^*)' \cap (T^+T)'$,
    where in fact
\begin{equation}\label{zuef}
  \Theta(C) R_j R_j^* = R_j \Theta_j(C) R_j^*=R_j R_j^* \Theta(C), \ \ C\in (T_jT_j^+)'\cap (TT^+)'  \,.
\end{equation}
    Moreover,
\begin{equation}\label{zuefvor}
  \Theta_j(C) = \Gamma_j \circ \Theta(C), \ \ C\in (T_jT_j^+)'\cap (TT^+)'  \,.
\end{equation}
\end{lemma}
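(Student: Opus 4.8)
The plan is to reduce all three assertions to a single intertwining identity between $\Theta_j(C)$ and $\Theta(C)$ through $R_j$. Fix $C\in(T_jT_j^+)'\cap(TT^+)'$, so that both $\Theta_j(C)$ and $\Theta(C)$ are defined; recall from \thref{thetadefeig} in \cite{KaPr2014} that they are characterized by $T_j\Theta_j(C)=CT_j$ and $T\Theta(C)=CT$ (which is what $(T_j\times T_j)^{-1}(C)$ and $(T\times T)^{-1}(C)$ stand for, $T$ and $T_j$ not being boundedly invertible). Using $T_j=TR_j$ together with $CT=T\Theta(C)$ one computes
\[
  TR_j\,\Theta_j(C)=T_j\,\Theta_j(C)=C\,T_j=C\,T\,R_j=T\,\Theta(C)\,R_j ,
\]
and since $T$ is injective this yields the intertwining relation $R_j\,\Theta_j(C)=\Theta(C)\,R_j$.

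Next I would establish the adjoint companion $\Theta_j(C)\,R_j^*=R_j^*\,\Theta(C)$, an identity of bounded operators $\mc H\to\mc H_j$. Taking Hilbert-space adjoints of $R_j\,\Theta_j(C)=\Theta(C)\,R_j$ gives $\Theta_j(C)^*R_j^*=R_j^*\,\Theta(C)^*$; since $T_jT_j^+$ and $TT^+$ are selfadjoint, their commutants are $*$-algebras, so $C^+$ again lies in $(T_jT_j^+)'\cap(TT^+)'$, and since $\Theta$ and $\Theta_j$ are $*$-homomorphisms we have $\Theta_j(C)^*=\Theta_j(C^+)$, $\Theta(C)^*=\Theta(C^+)$. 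As $C$ ranges over $(T_jT_j^+)'\cap(TT^+)'$ so does $C^+$, and the claimed identity follows. (Alternatively: from the intertwining relation and the $*$-homomorphism property one gets the covariant identities $\Theta(C)\,T^+=T^+C$ and $\Theta_j(C)\,T_j^+=T_j^+C$; then, using $T_j^+=R_j^*T^+$,
\[
  \Theta_j(C)\,R_j^*T^+=\Theta_j(C)\,T_j^+=T_j^+C=R_j^*T^+C=R_j^*\,\Theta(C)\,T^+ ,
\]
and since $\ran T^+$ is dense in $\mc H$ while both sides are bounded, $\Theta_j(C)\,R_j^*=R_j^*\,\Theta(C)$.)

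With both intertwiners available, \eqref{zuef} is immediate: right-multiplying $R_j\,\Theta_j(C)=\Theta(C)\,R_j$ by $R_j^*$ gives $\Theta(C)\,R_jR_j^*=R_j\,\Theta_j(C)\,R_j^*$, and left-multiplying $\Theta_j(C)\,R_j^*=R_j^*\,\Theta(C)$ by $R_j$ gives $R_j\,\Theta_j(C)\,R_j^*=R_jR_j^*\,\Theta(C)$. In particular $\Theta(C)$ commutes with $R_jR_j^*$, and since $\Theta$ maps $(TT^+)'$ into $(T^+T)'$ by \thref{thetadefeig}, we conclude $\Theta\big((T_jT_j^+)'\cap(TT^+)'\big)\subseteq(R_jR_j^*)'\cap(T^+T)'$. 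Finally, for \eqref{zuefvor}: since $\Theta(C)\in(R_jR_j^*)'$, the operator $\Gamma_j\big(\Theta(C)\big)=(R_j\times R_j)^{-1}\big(\Theta(C)\big)$ is defined and is characterized by $R_j\,\Gamma_j(\Theta(C))=\Theta(C)\,R_j$; comparing this with the intertwining relation $R_j\,\Theta_j(C)=\Theta(C)\,R_j$ and cancelling the injective $R_j$ yields $\Gamma_j(\Theta(C))=\Theta_j(C)$.

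I do not anticipate a genuine obstacle here; the argument is essentially bookkeeping with the characterizing identities of \thref{thetadefeig}. The one step asking for some care is the adjoint companion $\Theta_j(C)\,R_j^*=R_j^*\,\Theta(C)$: merely adjoining the first intertwiner is not enough, and one must invoke the $*$-homomorphism property of $\Theta$ and $\Theta_j$ (hence the selfadjointness of $T_jT_j^+$ and $TT^+$, which is what makes $C^+$ admissible), or else route the argument through the dense range of $T^+$.
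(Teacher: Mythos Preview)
Your proof is correct and uses essentially the same ingredients as the paper's --- the characterizing intertwiners from \thref{thetadefeig} in \cite{KaPr2014} together with the $*$-homomorphism property applied to $C^+$. The only execution difference is that you first isolate the one-sided intertwining $R_j\Theta_j(C)=\Theta(C)R_j$ (using only injectivity of $T$) and read off everything from that, whereas the paper sandwiches between $T$ and $T^+$ to obtain $R_j\Theta_j(C)R_j^*=R_jR_j^*\Theta(C)$ directly (using also the density of $\ran T^+$); your route is marginally cleaner and makes \eqref{zuefvor} an immediate consequence of the intertwiner.
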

\begin{proof}
    According to \thref{thetadefeig} in \cite{KaPr2014} we have 
    $\Theta_j(C) T_j^+ = T_j^+ C$ and $T^+ C = \Theta(C) T^+$ for $C\in (T_jT_j^+)'\cap (TT^+)'$.
    Therefore,
\begin{align*}
    T (\, R_j \Theta_j(C) R_j^* \, ) T^+ & =  T_j \Theta_j(C) T_j^+ = T_j T_j^+ C \\ & =
    T R_j R_j^* T^+ C = T ( \, R_j R_j^* \Theta(C) \, ) T^+ \,. 
\end{align*}
$\ker T=\{0\}$ and the density of $\ran T^+$ yield $R_j \Theta_j(C) R_j^*=R_j R_j^* \Theta(C)$.
Applying this equation to $C^+$ and taking adjoints yields 
$R_j \Theta_j(C) R_j^*=\Theta(C) R_j R_j^*$. In particular, $\Theta(C) \in (R_jR_j^*)'$.
Therefore, we can apply $\Gamma_j$ to $\Theta(C)$ and get
\[
  \Gamma_j \circ \Theta(C) = R_j^{-1} T^{-1}C T R_j = T_j^{-1} C T_j = \Theta_j(C) \,.
\]
\end{proof}

For the following assertion note that by \eqref{zuefvor} and by the fact that $\Gamma_j$
is a $*$-algebra homomorphism mapping the identity operator to the identity operator,
for $j=1,\dots,m$ we have 
\begin{equation}\label{4ucd75}
  \sigma(\Theta(C)) \subseteq \sigma(\Theta_j(C)) \ \ \text{ for all } \ \ C\in (T_jT_j^+)'\cap (TT^+)' \,.
\end{equation}

\begin{corollary}\thlab{normtransf}
     For a $j\in\{1,\dots,m\}$ let $N\in B(\mc K)$ be normal such that $N\in(T_jT_j^+)'\cap (TT^+)'$.
     Then $\Theta(N)$ is a normal operator in the Hilbert space $\mc H$, and 
     $\Theta_j(N)$  is a normal operator in the Hilbert space $\mc H_j$.
     Denoting by $E$ ($E_j$) the spectral
     measure of $\Theta(N)$ ($\Theta_j(N)$), we have $E(\Delta) \in (R_jR_j^*)' \cap (T^+T)'$ and
     \[
	\Gamma_j(E(\Delta)) = E_j(\Delta) \,,
     \]
     for all Borel subsets $\Delta$ of $\bb C$, where $E_j(\Delta) \in (R_j^*R_j)' \cap (T_j^+T_j)'$. 
     
     Moreover, $\int h \, dE \in (R_jR_j^*)' \cap (T^+T)'$ 
     and 
     \[
	\Gamma_j\left(\int h \, dE \right) = \int h \, dE_j 
     \]
     for any bounded and measurable $h: \sigma(\Theta(N)) \to \bb C$, where $\int h \, dE_j 
     \in (R_j^*R_j)' \cap (T_j^+T_j)'$.
\end{corollary}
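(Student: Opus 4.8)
The plan is to settle normality first, then place the relevant spectral data inside suitable von Neumann algebras, and finally transport one spectral measure onto the other by an intertwining argument.

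\emph{Normality.} Since $N$ lies in the $*$-algebras $(TT^+)'$ and $(T_jT_j^+)'$, so does $N^+$; because $\Theta$ and $\Theta_j$ are $*$-algebra homomorphisms we have $\Theta(N)^*=\Theta(N^+)$ and $\Theta_j(N)^*=\Theta_j(N^+)$, so that $\Theta(N)\Theta(N)^*=\Theta(NN^+)=\Theta(N^+N)=\Theta(N)^*\Theta(N)$, and likewise for $\Theta_j$. Hence $\Theta(N)$ on $\mc H$ and $\Theta_j(N)$ on $\mc H_j$ are normal and carry spectral measures $E$ and $E_j$.

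\emph{Locating the spectral data.} The operators $T^+T$ and $R_jR_j^*$ in $B(\mc H)$ are selfadjoint, so $(T^+T)'$ and $(R_jR_j^*)'$ are von Neumann algebras. By \eqref{thetaVdef} they contain $\Theta(N)$, and applying $\Theta$ to $N^+$ also $\Theta(N)^*$; by \eqref{zuef}, applied with $C=N$ and with $C=N^+$, the same is true for $(R_jR_j^*)'$. Consequently $(R_jR_j^*)'\cap(T^+T)'$ contains the von Neumann algebra generated by $\Theta(N)$ and $\Theta(N)^*$, and the latter contains every spectral projection $E(\Delta)$ and every $\int h\,dE$ with $h$ bounded and measurable. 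In the same way $(R_j^*R_j)'$ and $(T_j^+T_j)'$ are von Neumann algebras containing $\Theta_j(N)=\Gamma_j\circ\Theta(N)$ (by \eqref{zuefvor}) and $\Theta_j(N)^*=\Theta_j(N^+)$, hence containing $E_j(\Delta)$ and $\int h\,dE_j$.

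\emph{Intertwining.} Since $N$ and $N^+$ both lie in $(T_jT_j^+)'\cap(TT^+)'$, \eqref{zuefvor} gives $R_j\Theta_j(N)=\Theta(N)R_j$ and $R_j\Theta_j(N^+)=\Theta(N^+)R_j$, hence $R_j\,p(\Theta_j(N),\Theta_j(N)^*)=p(\Theta(N),\Theta(N)^*)\,R_j$ for every two-variable polynomial $p$. Fix a closed disc $D\subseteq\bb C$ containing $\sigma(\Theta(N))\cup\sigma(\Theta_j(N))$. The continuous functional calculi $g\mapsto g(\Theta(N))=\int g\,dE$ and $g\mapsto g(\Theta_j(N))=\int g\,dE_j$ are norm continuous on $C(D)$, left and right multiplication by $R_j$ are norm continuous, and polynomials in $z$ and $\qu z$ are dense in $C(D)$ by Stone--Weierstrass; therefore $R_j\,g(\Theta_j(N))=g(\Theta(N))\,R_j$ for all $g\in C(D)$. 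For fixed $x\in\mc H_j$ and $y\in\mc H$ the finite complex Borel measures $\Delta\mapsto(R_jE_j(\Delta)x,y)$ and $\Delta\mapsto(E(\Delta)R_jx,y)$ on $D$ integrate every $g\in C(D)$ to the same value, so they coincide by the uniqueness part of the Riesz representation theorem; letting $x$ and $y$ vary yields $R_jE_j(\Delta)=E(\Delta)R_j$ for all Borel $\Delta$, and, passing to simple functions and then to uniform limits, $R_j\int h\,dE_j=\bigl(\int h\,dE\bigr)R_j$ for every bounded measurable $h$.

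\emph{Conclusion.} Because $E(\Delta)$ and $\int h\,dE$ lie in $(R_jR_j^*)'$, the operators $\Gamma_j(E(\Delta))=R_j^{-1}E(\Delta)R_j$ and $\Gamma_j\bigl(\int h\,dE\bigr)=R_j^{-1}\bigl(\int h\,dE\bigr)R_j$ are defined, and the intertwining relations identify them with $E_j(\Delta)$ and $\int h\,dE_j$; combined with the memberships from the second paragraph this is the full statement. The only delicate point is this transport: one cannot conjugate $\Theta_j(N)=R_j^{-1}\Theta(N)R_j$ directly through a functional calculus, since $R_j^{-1}$ is generally unbounded, so the passage from the polynomial (continuous) functional calculus to the spectral measure has to go through the Riesz representation theorem as above; everything else is routine.
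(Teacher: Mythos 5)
Your proof is correct and follows essentially the same route as the paper: normality via the $*$-homomorphism property, membership of the spectral data in the commutants via standard double-commutant facts, and transport of the spectral measure by checking the intertwining on polynomials in $z,\bar z$ and invoking Stone--Weierstrass plus the uniqueness part of the Riesz representation theorem. The only (harmless) difference is that you intertwine on the other side, using $E(\Delta)R_j=R_jE_j(\Delta)$ together with the injectivity of $R_j$ to identify $\Gamma_j(E(\Delta))$, whereas the paper uses $\Gamma_j(D)R_j^*=R_j^*D$ together with the density of $\ran R_j^*$ in $\mc H_j$.
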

\begin{proof}
    The normality of $\Theta(N)$ and $\Theta_j(N)$ is clear, since $\Theta$ and $\Theta_j$ are
    $*$-homomorphisms. From \thref{comreg} we know that $\Theta(N) \in (R_jR_j^*)' \cap (T^+T)'$.
    According to the well known properties of $\Theta(N)$'s spectral measure we obtain
    $E(\Delta) \in (R_jR_j^*)' \cap (T^+T)'$ and, in turn, 
    $\int h \, dE \in (R_jR_j^*)' \cap (T^+T)'$. In particular, $\Gamma_j$ can be applied 
    to $E(\Delta)$ and $\int h \, dE$.
    Similarly, $\Theta_j(N)\in (T_j^+T_j)'$ implies $E_j(\Delta), \int h \, dE_j \in (T_j^+T_j)'$ 
    for a bounded and measurable $h$.
    
    Recall from \thref{thetadefeig} in \cite{KaPr2014} that $\Gamma_j(D) R_j^*x = R_j^* D$
    for $D \in (R_jR_j^*)'$. Hence, for $x\in \mc H$ and $y\in \mc H_j$ we have
    \[
	(\Gamma_j(E(\Delta)) R_j^*x, y) = (R_j^* E(\Delta) x,y) = (E(\Delta) x, R_j y)
    \]
    and, in turn,
    \begin{multline*}
	\int_{\bb C} s(z,\bar z) \, d(\Gamma_j(E) R_j^*x, y) = 
	\int_{\bb C} s(z,\bar z) \, d(E x, R_j y) = (s(\Theta(N),\Theta(N)^*)x, R_j y) \\
	= (R_j^* s(\Theta(N),\Theta(N)^*)x, y) = (\Gamma_j\big(s(\Theta(N),\Theta(N)^*)\big) R_j^* x,y) 
    \end{multline*}
    for any $s(z,w) \in \bb C[z,w]$.
    By \eqref{zuefvor} and the fact, that $\Gamma_j$ is a $*$-homomorphism, 
    we have $\Gamma_j(s(\Theta(N),\Theta(N)^*)) = s(\Theta_j(N),\Theta_j(N)^*)$.
    Consequently,
    \[
	\int_{\bb C} s(z,\bar z) \, d(\Gamma_j(E) R_j^*x, y) = \int_{\bb C} s(z,\bar z) \, d(E_j R_j^*x, y) \,.
    \]
    Since $E(\bb C\setminus K)=0$ and $E_j(\bb C\setminus K)=0$ for a certain compact $K\subseteq \bb C$ and since 
    the set of all $s(z,\bar z), \ s\in \bb C[z,w]$, is densely contained in $C(K)$, we obtain from the uniqueness assertion 
    in the Riesz Representation Theorem 
    \[
	(\Gamma_j(E(\Delta)) R_j^*x, y) = (E_j(\Delta) R_j^*x, y) \ \ \text{ for all } \ \ x\in \mc H, \, y\in \mc H_j \,,
    \]
    for all Borel subsets $\Delta$ of $\bb C$.
    Due to the density of $\ran R_j^*$ in $\mc H_j$ we even have 
    $(\Gamma_j(E(\Delta)) z, y) = (E_j(\Delta) z, y)$ for all $y,z \in \mc H_j$, and in turn
    $\Gamma_j(E(\Delta))=E_j(\Delta)$. Since $\Gamma_j$ maps into $(R_j^*R_j)'$, we have
    $E_j(\Delta)\in (R_j^*R_j)'$. This yields  $\int h \, dE_j\in (R_j^*R_j)'$ for any bounded and measurable $h$.
    
    If $h: \sigma(\Theta(N)) \to \bb C$ is bounded and measurable, then by \eqref{4ucd75} also
    its restriction to $\sigma(\Theta_j(N)) = \sigma(\Gamma_j\circ\Theta(N))$ is bounded and measurable. 
    Due to $E_j(\Delta) R_j^*= \Gamma_j(E(\Delta)) R_j^* = R_j^*E(\Delta)$, for 
    $x\in\mc H$ and $y\in\mc H_j$ we have
    \begin{multline*}
	(\Gamma_j\left(\int h \, dE \right) R_j^*x,y) = 
	(R_j^* \left(\int h \, dE \right) x,y) = 
	(\left(\int h \, dE \right) x, R_j y) \\
	= \int h \, d(E x,R_j y) = \int h \, d(E_j R_j^* x, y) =
	(\left(\int h \, dE_j\right) R_j^*x,y) \,. 
    \end{multline*}
    The density of $\ran R_j^*$ yields $\Gamma_j\left(\int h \, dE \right) = \int h \, dE_j$.
\end{proof}

Recall from \thref{Xidefeig} in \cite{KaPr2014} the mappings ($j=1,\dots,m$)
\[
    \Xi_j : B(\mc H_j) \to B(\mc K), \ \
		\Xi_j(D_j) = T_j D_j T_j^{+} \,,
\]
and $\Xi : B(\mc H) \to B(\mc K)$ with $\Xi(D) = T D T^{+}$.
By ($j=1,\dots,m$)
\[
    \Lambda_j: B(\mc H_j) \to B(\mc H), 
	\ \ \Lambda_j(D_j) = R_j D_j R_j^{*} \,,
\]
we shall denote the corresponding mappings outgoing from the mappings $R_j: \mc H_j \to \mc H$. 
Due to $T_j=T R_j$ we have $\Xi_j = \Xi \circ \Lambda_j$.

According to \thref{Xidefeig} in \cite{KaPr2014}, $\Lambda_j\circ \Gamma_j(D) = D R_jR_j^*$ for
$D\in (R_jR_j^*)'$.
Hence, using the notation from \thref{normtransf}
\begin{equation}\label{zuef2}
    \Xi_j(\int h \, dE_j) = \Xi\Big(\Lambda_j\circ \Gamma_j\left(\int h \, dE \right)\Big) = 
    \Xi(R_jR_j^{*} \int h \, dE) \,.
\end{equation}

\begin{lemma}\thlab{existtreanspost}
    Assume that for $j\in\{1,\dots,m\}$ the operator $T_jT_j^+$ commutes with $TT^+$ on $\mc K$.
    Then the operators $R_j R_j^*$, $T^+ T$ commute on $\mc H$ and $R_j^*R_j$, $T_j^+ T_j$ commute on $\mc H_j$. Moreover, 
    \begin{equation}\label{zuef3}
	\Theta(T_jT_j^+) = R_j R_j^* T^+ T = T^+ T R_j R_j^* \,.
    \end{equation}
\end{lemma}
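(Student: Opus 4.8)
The plan is to reduce everything to \thref{comreg}, observing that the hypothesis ``$T_jT_j^+$ commutes with $TT^+$ on $\mc K$'' says exactly that both $TT^+$ and $T_jT_j^+$ lie in $(T_jT_j^+)'\cap(TT^+)'$, so that \eqref{zuef} and \eqref{zuefvor} become available for $C=TT^+$ and for $C=T_jT_j^+$.

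First I would record the two elementary computations $\Theta(TT^+)=T^+T$ and $\Theta(T_jT_j^+)=R_jR_j^*T^+T$: both are immediate from $\Theta(C)=T^{-1}CT$ together with the injectivity of $T$ and the identities $T_j=TR_j$, $T_j^+=R_j^*T^+$, whence $T_jT_j^+=TR_jR_j^*T^+$. Feeding $C=TT^+$ into \eqref{zuef} then gives $\Theta(TT^+)R_jR_j^*=R_jR_j^*\Theta(TT^+)$, that is, $T^+T\,R_jR_j^*=R_jR_j^*\,T^+T$ on $\mc H$; this is the first assertion, and combined with $\Theta(T_jT_j^+)=R_jR_j^*T^+T$ it simultaneously yields the displayed identity $\Theta(T_jT_j^+)=R_jR_j^*T^+T=T^+TR_jR_j^*$.

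For the commutativity on $\mc H_j$ I would apply $\Gamma_j$. The step just carried out shows $T^+T\in(R_jR_j^*)'$, so $\Gamma_j(T^+T)$ is defined; since $\Gamma_j$ is a $*$-algebra homomorphism and $R_jR_j^*$, $T^+T$ commute inside $(R_jR_j^*)'$, their images $\Gamma_j(R_jR_j^*)=R_j^*R_j$ and $\Gamma_j(T^+T)$ commute in $B(\mc H_j)$. To connect this with $T_j^+T_j$ I would use the intertwining relation $\Gamma_j(D)R_j^*=R_j^*D$ for $D\in(R_jR_j^*)'$ recalled in the proof of \thref{normtransf}: with $D=T^+T$, right multiplication by $R_j$ gives $\Gamma_j(T^+T)\,R_j^*R_j=R_j^*T^+TR_j=T_j^+T_j$. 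Hence $T_j^+T_j$ is a product of $\Gamma_j(T^+T)$ and $R_j^*R_j$, each of which commutes with $R_j^*R_j$, so $T_j^+T_j$ commutes with $R_j^*R_j$, as desired.

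Most of this is bookkeeping once \thref{comreg} is in hand; the one place calling for a little care is the last step, since the $\mc H_j$-level commutativity is not simply the homomorphic image of the $\mc H$-level one — the auxiliary identity $\Gamma_j(T^+T)R_j^*R_j=T_j^+T_j$ is genuinely needed to bridge between $\Gamma_j(T^+T)$ and $T_j^+T_j$. One should also check that each use of $\Theta$, $\Theta_j$ or $\Gamma_j$ is legitimate, i.e.\ that the operator plugged in lies in the relevant commutant, but for $TT^+$, $T_jT_j^+$ and $T^+T$ this is either the hypothesis itself or has just been established.
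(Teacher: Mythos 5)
Your proof is correct. The paper establishes all three assertions by direct computation: it sandwiches the claimed $\mc H$-identity between $T$ and $T^+$, reduces it to the hypothesis $TT^+\,T_jT_j^+ = T_jT_j^+\,TT^+$ via the factorization $T_jT_j^+ = TR_jR_j^*T^+$, and cancels using $\ker T=\{0\}$ together with the density of $\ran T^+$; the $\mc H_j$-level commutativity is then read off by conjugating the $\mc H$-identity with $R_j^*(\cdot)R_j$ and using $T_j^+T_j = R_j^*T^+TR_j$; and \eqref{zuef3} is the one-line computation $T^{-1}T_jT_j^+T = T^{-1}TR_jR_j^*T^+T = R_jR_j^*T^+T$. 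You instead route the cancellation through \eqref{zuef} of \thref{comreg} applied to $C=TT^+$, which is legitimate because the hypothesis says precisely that $TT^+$ and $T_jT_j^+$ both lie in $(T_jT_j^+)'\cap(TT^+)'$, and you obtain the commutativity on $\mc H_j$ via $\Gamma_j$ and the intertwining relation $\Gamma_j(D)R_j^*=R_j^*D$ rather than by direct conjugation. Both routes are sound; the mechanism hidden inside \eqref{zuef} is the same sandwich-and-cancel argument, so your version buys economy (no fresh cancellation argument) at the cost of checking that each operator fed to $\Theta$ or $\Gamma_j$ lies in the appropriate commutant, which you do. The paper's direct chain $T_j^+T_jR_j^*R_j = R_j^*(T^+TR_jR_j^*)R_j = R_j^*(R_jR_j^*T^+T)R_j = R_j^*R_jT_j^+T_j$ is a bit shorter than your $\Gamma_j$ detour, but your observation that $T_j^+T_j=\Gamma_j(T^+T)\,R_j^*R_j$ is a product of two operators in $(R_j^*R_j)'$ is a perfectly valid substitute.
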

\begin{proof}
If $T_jT_j^+$ and $TT^+$ commute on $\mc K$, then
\[
    T ( \, T^+ T R_j R_j^* \, )T^+ = TT^+ T_jT_j^+ = T_jT_j^+ TT^+ = T ( \, R_j R_j^* T^+ T \, )T^+ \,.
\]
Employing $T$'s injectivity and the density of $\ran T^+$, we see that $R_j R_j^*$ and $T^+ T$ commute. 
From this we derive
\[
    T_j^+ T_j R_j^*R_j = R_j^* (T^+ T R_j R_j^*) R_j = R_j^* (R_j R_j^* T^+ T) R_j = R_j^*R_j T_j^+ T_j \,.
\]
\eqref{zuef3} follows from
\[
    T^{-1} T_jT_j^+ T = T^{-1} T R_j R_j^* T^+ T = R_j R_j^* T^+ T \,.
\]
\end{proof}

\section{Definitizability} \label{definitiza}

In \cite{Ka2015} we said that a normal $N\in B(\mc K)$ is definitizable, if its real part 
$A:=\frac{N+N^+}{2}$ and its imaginary part $B:=\frac{N-N^*}{2i}$ are definitizable 
in the sense that there exist real polynomials $p,q\in\bb R[z]\setminus \{0\}$ such that 
$[p(A)v,v] \geq 0$ and $[q(B)v,v] \geq 0$ for all $v\in \mc K$.
In the present note we will relax this condition. 

\begin{definition}\thlab{definidef}
For a normal $N\in B(\mc K)$ we call $p(x,y) \in \bb C[x,y]\setminus \{0\}$ a definitizing
polynomial for $N$, if 
\begin{equation}\label{defalgl}
    [p(A,B) v,v] \geq 0 \ \ \text{ for all } \ \ v\in \mc K \,.
\end{equation}
where $A=\frac{N+N^+}{2}$ and $B=\frac{N-N^+}{2i}$. If such a definitizing
$p \in \bb C[x,y]\setminus \{0\}$ exists, then we call $N$ definitizable normal.
\end{definition}

Clearly, we could also write $p$ as a polynomial of the variables $N$ and $N^+$.
But because of $A=A^+$ and $B=B^+$, writing $p$ as a polynomial of the 
variables $A$ and $B$ has some notational advantages.

\begin{remark}\thlab{definidefrem}
    According to \eqref{defalgl} the operator $p(A,B)\in B(\mc K)$ must be selfadjoint;
    i.e.\ $p(A,B)^+ = p^\#(A,B)$, where $p^\#(x,y) = \overline{p(\overline{x},\overline{y})}$.
    Hence, $q:=\frac{p_j+ p_j^\#}{2}$ is real, i.e.\ $q(x,y) \in \bb R[x,y]\setminus \{0\}$, and     
    satisfies $q(A,B) = p(A,B)$.
    Thus, we can assume that a definitizing polynomial is real.
\end{remark}

In the present section we assume that $p_j(x,y) \in \bb R[x,y]\setminus \{0\}$, $j=1,\dots,m$, are real, 
definitizing polynomial for $N$.

\begin{proposition}\thlab{defNspaces}
  With the above assumptions and notation
  there exist Hilbert spaces $(\mc H,(.,.))$, $(\mc H_j,(.,.))$, $j=1,\dots,m$ and 
  bounded linear and injective operators $T: \mc H \to \mc K$, $T_j: \mc H_j \to \mc K$,
  such that 
  \[
      T_j T_j^+ = p_j(A,B), \ \ \text{ and } \ \ 
      T T^+ = \sum_{k=1}^m T_k T_k^+ = \sum_{k=1}^m p_k(A,B) \,.  
  \]
\end{proposition}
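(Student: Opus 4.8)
The plan is to deduce the statement from a single, GNS-type, construction: to every bounded selfadjoint operator $S\in B(\mc K)$ satisfying $[Sv,v]\ge 0$ for all $v\in\mc K$ I will attach a Hilbert space $\mc H_S$ and a bounded injective operator $T_S:\mc H_S\to\mc K$ with $T_ST_S^+=S$. Once this is available the proposition follows at once: since $AB=BA$, each $p_j(A,B)$ is unambiguously defined, it is selfadjoint by \thref{definidefrem}, and $[p_j(A,B)v,v]\ge 0$ by assumption; moreover $S_0:=\sum_{k=1}^m p_k(A,B)$ is selfadjoint and $[S_0v,v]=\sum_{k=1}^m[p_k(A,B)v,v]\ge 0$. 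Applying the construction with $S=p_j(A,B)$ yields $\mc H_j:=\mc H_{p_j(A,B)}$ and $T_j:=T_{p_j(A,B)}$ with $T_jT_j^+=p_j(A,B)$, and applying it with $S=S_0$ yields $\mc H:=\mc H_{S_0}$ and $T:=T_{S_0}$ with $TT^+=S_0=\sum_{k=1}^m p_k(A,B)=\sum_{k=1}^m T_kT_k^+$, which is exactly the asserted pair of identities.

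For the construction itself I would proceed as follows. The form $\langle u,v\rangle_S:=[Su,v]$ on $\mc K$ is sesquilinear, Hermitian because $S=S^+$, and positive semidefinite by hypothesis. By the Cauchy--Schwarz inequality for positive semidefinite forms its isotropic part $\{v:\langle v,v\rangle_S=0\}$ equals $\{v:\langle v,w\rangle_S=0\text{ for all }w\in\mc K\}$, and the latter is $\ker S$ because $[.,.]$ is nondegenerate. Hence $\langle .,.\rangle_S$ induces a genuine inner product on $\mc K/\ker S$; let $\mc H_S$ be the completion of the resulting pre-Hilbert space and $\iota_S:\mc K\to\mc H_S$, $\iota_S v=v+\ker S$, the canonical map, which has dense range by construction. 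Since $\|\iota_S v\|_{\mc H_S}^2=[Sv,v]$ is dominated by a constant times $\|v\|^2$ for any Hilbert space norm $\|.\|$ on $\mc K$ coming from a fundamental symmetry, $\iota_S$ is bounded, so its Krein space adjoint $T_S:=\iota_S^+:\mc H_S\to\mc K$ exists and is bounded.

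It remains to check that $T_S$ is injective and that $T_ST_S^+=S$. For injectivity, $T_Sh=0$ means $(h,\iota_S v)_{\mc H_S}=0$ for all $v\in\mc K$, i.e. $h\perp\overline{\ran\iota_S}=\mc H_S$, whence $h=0$. For the identity, using $(\iota_S^+)^+=\iota_S$ one gets, for all $v,w\in\mc K$,
\[
  [T_ST_S^+v,w]=[\iota_S^+\iota_S v,w]=(\iota_S v,\iota_S w)_{\mc H_S}=[Sv,w]\,,
\]
and nondegeneracy of $[.,.]$ forces $T_ST_S^+=S$. The whole argument is essentially routine; the only points that need a little care are the identification of the isotropic subspace of $\langle .,.\rangle_S$ with $\ker S$ (which is exactly where nondegeneracy of $[.,.]$ enters) and the bookkeeping with the Krein adjoint that legitimizes $T_S^+=\iota_S$. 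Alternatively, one may simply invoke the embedding lemma from \cite{KaPr2014} applied to the positive operators $p_j(A,B)$ and $\sum_{k=1}^m p_k(A,B)$; I do not expect a genuine obstacle along either route.
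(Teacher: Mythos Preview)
Your proof is correct and follows essentially the same route as the paper: both construct $\mc H_j$ (resp.\ $\mc H$) as the completion of $\mc K/\ker p_j(A,B)$ (resp.\ $\mc K/\ker\sum_k p_k(A,B)$) with respect to the semidefinite form $[p_j(A,B)\,\cdot,\cdot]$ (resp.\ $[\sum_k p_k(A,B)\,\cdot,\cdot]$), take $T_j$ (resp.\ $T$) to be the Krein adjoint of the factor map, and verify the operator identity via $[T_jT_j^+x,y]=(T_j^+x,T_j^+y)$. Your write-up is somewhat more explicit about boundedness of the factor map and the identification of the isotropic subspace with $\ker S$, but there is no substantive difference in method.
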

\begin{proof}
Let $(\mc H_j,(.,.))$ be the Hilbert space completion of $\mc K/\ker p_j(A,B)$ with respect to 
$[p_j(A,B).,.]$ and let $T_j: \mc H_j \to \mc K$
be the adjoint of the factor mapping $x\mapsto x+ \ker p_j(A,B)$ of $\mc K$ into $\mc H_j$. 
Since $T_j^+$ has dense range, $T_j$ must be injective.
Similarly, let 
$(\mc H,(.,.))$ be the Hilbert space completion of $\mc K/(\ker \sum_{k=1}^m p_k(A,B))$ with respect 
to $[\big(\sum_{k=1}^m p_k(A,B)\big).,.]$ and let
$T: \mc H \to \mc K$ be the injective adjoint of the factor mapping of $\mc K$ into $\mc H$. 

From $[T T^+ x,y] = (T^+ x,T^+ y) = (x,y) = [\big(\sum_{k=1}^m p_k(A,B)\big) x,y]$ and
$[T_j T_j^+ x,y] = (T_j^+ x,T_j^+ y) = (x,y) = [p_j(A,B)x,y]$ 
for all $x,y\in\mc K$ we conclude
  \[
      T_j T_j^+ = p_j(A,B) \ \ \text{ and } \ \ T T^+ = \sum_{k=1}^m p_k(A,B) \,,
  \]
where the operators $T_j T_j^+ = p_j(A,B)$, $j=1,\dots,m$, pairwise commute, because $A$ and $B$ do.
\end{proof}

\thref{defNspaces} in particular yields
\begin{equation}\label{ghqw73si}
    TT^+ = \sum_{k=1}^m T_kT_k^+ 
\end{equation}
Since for $x\in \mc K$ and $j\in \{1,\dots,m\}$ we have
\[ 
	(T^+x,T^+x) = [TT^+x,x] = \sum_{k=1}^m [T_kT_k^+x,x] = 
		  \sum_{k=1}^m (T_k^+x,T_k^+x) \geq (T_j^+x,T_j^+x)  \,,
\] 
one easily concludes that $T^+ x \mapsto T_j^+x$ constitutes a well-defined, contractive linear mapping from 
$\ran T^+$ onto $\ran T_j^+$. By $(\ran T^+)^\bot = \ker T=\{0\}$ and $(\ran T_j^+)^\bot = \ker T_j=\{0\}$ 
these ranges are dense in the Hilbert spaces $\mc H$ and $\mc H_j$. Hence, 
there is a unique bounded linear continuation of $T^+ x \mapsto T_j^+x$ to $\mc H$, 
which has dense range in $\mc H_j$.

Denoting by $R_j$ the adjoint mapping of this continuation we clearly have $T_j = T R_j$
and $\ker R_j \subseteq \ker T_j = \{0\}$. From \eqref{ghqw73si} we conclude
\[
      T( \, I_{\mc H} \, )T^+ = TT^+ = \sum_{k=1}^m T R_k R_k^+T^+ = T( \, \sum_{k=1}^m R_k R_k^+ \, )T^+ \,.
\]
$\ker T=\{0\}$ and the density of $\ran T^+$ yield $\sum_{k=1}^m R_k R_k^* = I_{\mc H}$.

\begin{lemma}\thlab{existtreans}
    With the above notations and assumptions for $j=1,\dots,m$ there exist injective contractions $R_j: \mc H_j \to \mc H$
    such that $T_j = T R_j$ and $\sum_{k=1}^m R_k R_k^* = I_{\mc H}$. Moreover, we have
    \begin{equation}\label{zfv6lkc}
	  \{N,N^+\}'=\{A,B\}' \subseteq
          \bigcap_{k=1,\dots,m} (T_kT_k^+)' \subseteq (TT^+)'
    \end{equation}
    for all $j\in\{1,\dots,m\}$. Finally,
\begin{equation}\label{heaybab}
\begin{aligned}
    p_j(\Theta(A),\Theta(B)) & = R_j R_j^* \big( \sum_{k=1}^m p_k(\Theta(A),\Theta(B)) \big) \\ & =
    \big( \sum_{k=1}^m p_k(\Theta(A),\Theta(B)) \big) R_j R_j^* 
    \,,
\end{aligned}
\end{equation}
and for any $u \in \bb C[x,y]$
\begin{equation}\label{tt2479}
     p_j(A,B)u(A,B) = \Xi_j\big(u(\Theta_j(A),\Theta_j(B))\big) = \Xi\big(R_jR_j^{*} u(\Theta(A),\Theta(B))\big) \,, 
\end{equation}
    where $\Theta: (TT^+)' \ (\subseteq B(\mc K)) \to (T^+T)' \ (\subseteq B(\mc H))$ is as in
  \eqref{thetaVdef} and $\Xi : B(\mc H) \to B(\mc K)$ with $\Xi(D) = T D T^{+}$. 
\end{lemma}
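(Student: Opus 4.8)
The plan is to assemble \thref{existtreans} from the pieces already established in the excerpt. The existence of the injective contractions $R_j$ with $T_j=TR_j$ and $\sum_{k=1}^m R_kR_k^*=I_{\mc H}$ is exactly the content of the paragraph preceding the statement; I would simply record that this construction, together with the identity $\sum_{k=1}^m R_kR_k^*=I_{\mc H}$ derived there, gives the first assertion. For the inclusion \eqref{zfv6lkc}, note that $\{N,N^+\}'=\{A,B\}'$ because $A,B$ are polynomials in $N,N^+$ and conversely. Since $T_jT_j^+=p_j(A,B)$ and $TT^+=\sum_k p_k(A,B)$ by \thref{defNspaces}, any operator commuting with $A$ and $B$ commutes with each $p_j(A,B)$ and hence lies in $\bigcap_k (T_kT_k^+)'$; and commuting with all $p_k(A,B)$ forces commuting with their sum, giving the second inclusion. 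In particular $N\in(TT^+)'\cap\bigcap_k(T_kT_k^+)'$, so $\Theta(N)$, $\Theta(A)$, $\Theta(B)$ and the $\Theta_j$'s are all defined, and $A,B$ fall under the hypotheses of \thref{comreg} and \thref{existtreanspost}.

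For \eqref{heaybab} I would apply \eqref{zuef3} from \thref{existtreanspost} with $T_jT_j^+$ in place of the generic operator: since $T_jT_j^+=p_j(A,B)$ commutes with $TT^+=\sum_k p_k(A,B)$ (the $p_k(A,B)$ pairwise commute as $A,B$ do), we get $\Theta(p_j(A,B))=R_jR_j^*\,T^+T=T^+T\,R_jR_j^*$. Now $\Theta$ is a $*$-homomorphism fixing the identity, and $\Theta(T^+T)=\Theta(\sum_k p_k(A,B))=\sum_k p_k(\Theta(A),\Theta(B))$ while $\Theta(p_j(A,B))=p_j(\Theta(A),\Theta(B))$; substituting gives \eqref{heaybab}. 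For \eqref{tt2479}: applying $\Theta_j$, which is also a $*$-homomorphism, to $p_j(A,B)u(A,B)=(T_jT_j^+)u(A,B)$ and using $\Theta_j(T_jT_j^+)=T_j^+T_j$ yields $\Theta_j(p_j(A,B)u(A,B))=T_j^+T_j\cdot u(\Theta_j(A),\Theta_j(B))$; then \thref{Xidefeig} in \cite{KaPr2014} gives $\Xi_j(T_j^+T_j\,D_j)=T_jT_j^+\cdot(\text{the operator with that }\Theta_j\text{-image})$, more precisely $\Xi_j\circ\Theta_j(C)=T_jT_j^+\,C$ for $C\in(T_jT_j^+)'$, applied to $C=p_j(A,B)u(A,B)\in\{A,B\}'\subseteq(T_jT_j^+)'$. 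Finally the identity $\Xi_j=\Xi\circ\Lambda_j$, together with $\Lambda_j\circ\Gamma_j=(\cdot)R_jR_j^*$ on $(R_jR_j^*)'$ and \eqref{zuefvor} ($\Theta_j=\Gamma_j\circ\Theta$), rewrites $\Xi_j(u(\Theta_j(A),\Theta_j(B)))$ as $\Xi(R_jR_j^*\,u(\Theta(A),\Theta(B)))$.

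The one point requiring a little care is bookkeeping the domains: before invoking $\Theta$, $\Theta_j$, $\Gamma_j$ or $\Xi_j$ on a given operator I must check it lies in the appropriate commutant, and these checks all reduce to \eqref{zfv6lkc} together with the fact that $u(\Theta(A),\Theta(B))\in(T^+T)'$ (again because $\Theta(A),\Theta(B)$ commute with $T^+T=\Theta(TT^+)$, $\Theta$ being a homomorphism). Once the membership statements are in place, every displayed equality is a direct consequence of the homomorphism properties of $\Theta,\Theta_j,\Gamma_j$ and the intertwining identities $\Xi_j\circ\Theta_j(C)=T_jT_j^+C$, $\Theta_j=\Gamma_j\circ\Theta$, $\Xi_j=\Xi\circ\Lambda_j$, $\Lambda_j\circ\Gamma_j(D)=DR_jR_j^*$ recalled in \thref{comreg}, \thref{normtransf} and the paragraph around \eqref{zuef2}. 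I expect no genuine obstacle beyond this routine verification.
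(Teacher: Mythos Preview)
Your approach is essentially the paper's: the $R_j$ come from the preceding construction, \eqref{zfv6lkc} follows because the $T_kT_k^+=p_k(A,B)$ are polynomials in $A,B$, \eqref{heaybab} is obtained from \eqref{zuef3} together with $T^+T=\Theta(TT^+)$, and \eqref{tt2479} from the identity $\Xi_j\circ\Theta_j(C)=T_jT_j^+C$ combined with $\Xi_j=\Xi\circ\Lambda_j$, $\Theta_j=\Gamma_j\circ\Theta$ and $\Lambda_j\circ\Gamma_j(D)=DR_jR_j^*$.

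Two small slips to correct. First, you wrote ``$\Theta(T^+T)=\Theta(\sum_k p_k(A,B))$'', but $T^+T$ is an operator on $\mc H$ and is not in the domain of $\Theta$; the identity you need (and which the paper uses) is $T^+T=\Theta(TT^+)=\Theta\big(\sum_k p_k(A,B)\big)=\sum_k p_k(\Theta(A),\Theta(B))$. Second, in the derivation of \eqref{tt2479} you apply $\Xi_j\circ\Theta_j(C)=T_jT_j^+C$ with $C=p_j(A,B)u(A,B)$; that produces $T_jT_j^+\,p_j(A,B)u(A,B)=p_j(A,B)^2u(A,B)$ on the right, one factor of $p_j(A,B)$ too many. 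The correct choice is $C=u(A,B)\in\{A,B\}'\subseteq(T_jT_j^+)'$, giving directly
\[
p_j(A,B)u(A,B)=T_jT_j^+\,u(A,B)=\Xi_j\big(\Theta_j(u(A,B))\big)=\Xi_j\big(u(\Theta_j(A),\Theta_j(B))\big),
\]
after which your final step via $\Xi_j=\Xi\circ\Lambda_j$ and $\Lambda_j\circ\Gamma_j$ goes through as written. With these two fixes the argument is complete and matches the paper's proof.
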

\begin{proof}
    The first part was shown above, and \eqref{zfv6lkc} is clear from \thref{defNspaces}.  

      From \eqref{zuef3} and \thref{thetadefeig} in \cite{KaPr2014} we get
\begin{align*}
      p_j(\Theta(A),\Theta(B)) & = \Theta(p_j(A,B)) = \Theta(T_jT_j^+) = R_j R_j^* \, T^+ T = R_j R_j^* \, \Theta(TT^+) \\ & =
      R_j R_j^* \, \Theta( \sum_{k=1}^m p_k(A,B) ) =  R_j R_j^* \, \big( \sum_{k=1}^m p_k(\Theta(A),\Theta(B)) \big) \,, 
\end{align*}
      where $R_j R_j^*$ commutes with $T^+ T= \sum_{k=1}^m p_k(\Theta(A),\Theta(B))$ by \thref{existtreanspost}.
      Finally, \eqref{tt2479} follows from (see \thref{Xidefeig} in \cite{KaPr2014})
\begin{align*}
     p_j(A,B)u(A,B) & = \Xi_j\big(\Theta_j(u(A,B))\big) = \Xi\circ\Lambda_j\circ\Gamma_j\big(\Theta(u(A,B))\big) \\ 
     \nonumber & =
    \Xi\big(R_jR_j^{*} u(\Theta(A),\Theta(B))\big) \,. 
\end{align*}
\end{proof}

By \eqref{zfv6lkc} we can apply \thref{normtransf} in the present situation. In particular, $\Theta(N)$
is a normal operator on the Hilbert space $\mc H$.
Condition \eqref{defalgl} for $p=p_j, \ j=1,\dots,m$, implies certain spectral properties of $\Theta(N)$.

\begin{lemma}\thlab{speknorm}
  With the above assumptions and notation for $j\in\{1,\dots,m\}$ we have
\[
    \{z\in \bb C : |p_j(\Re z,\Im z)| > \| R_j R_j^* \| \cdot |\sum_{k=1}^m p_k(\Re z,\Im z)| \} \subseteq \rho(\Theta(N)) \,.
\]
In particular, the zeros of $\sum_{k=1}^m p_k(\Re z,\Im z)$ in $\bb C$ are contained in 
$\rho(\Theta(N)) \cup \{z\in \bb C: p_j(\Re z,\Im z)=0 \ \text{ for all } \ j=1,\dots,m\}$.
\end{lemma}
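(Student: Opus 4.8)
The plan is to exploit the fact that $\Theta(N)$ is a normal operator on the Hilbert space $\mathcal H$, so that for any polynomial $r(x,y)\in\mathbb R[x,y]$ we have the norm estimate
\[
   \|r(\Theta(A),\Theta(B))\| = \sup\{|r(\Re z,\Im z)| : z\in\sigma(\Theta(N))\}\,,
\]
coming from the continuous functional calculus for the normal operator $\Theta(N)$ (note $\Theta(A)=\Re\Theta(N)$ and $\Theta(B)=\Im\Theta(N)$ since $\Theta$ is a $*$-homomorphism fixing the identity, so $r(\Theta(A),\Theta(B))=(r\circ(\Re,\Im))(\Theta(N))$ in the functional calculus). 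Combined with \eqref{heaybab}, which reads
\[
   p_j(\Theta(A),\Theta(B)) = R_jR_j^*\Big(\sum_{k=1}^m p_k(\Theta(A),\Theta(B))\Big)\,,
\]
this gives the operator inequality (in norm) $\|p_j(\Theta(A),\Theta(B))\|\le \|R_jR_j^*\|\cdot\|\sum_k p_k(\Theta(A),\Theta(B))\|$.

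The core of the argument is then local at a point $z_0\in\mathbb C$ lying in the left-hand set, i.e.\ with $|p_j(\Re z_0,\Im z_0)| > \|R_jR_j^*\|\cdot|\sum_k p_k(\Re z_0,\Im z_0)|$. I would argue by contradiction: suppose $z_0\in\sigma(\Theta(N))$. Since $\Theta(N)$ is normal, $z_0\in\sigma(\Theta(N))$ means $z_0$ is an approximate eigenvalue, so there exist unit vectors $u_n\in\mathcal H$ with $\|(\Theta(N)-z_0)u_n\|\to 0$, hence also $(\Theta(A)-\Re z_0)u_n\to 0$ and $(\Theta(B)-\Im z_0)u_n\to 0$. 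Applying the polynomial functional calculus, $r(\Theta(A),\Theta(B))u_n - r(\Re z_0,\Im z_0)u_n\to 0$ for every $r$; in particular for $r=p_j$ and for $r=\sum_k p_k$. Feeding this into the identity from \eqref{heaybab} and passing to the limit, the scalars must satisfy $|p_j(\Re z_0,\Im z_0)| = \lim\|p_j(\Theta(A),\Theta(B))u_n\| = \lim\|R_jR_j^*\big(\sum_k p_k(\Theta(A),\Theta(B))\big)u_n\|\le \|R_jR_j^*\|\cdot|\sum_k p_k(\Re z_0,\Im z_0)|$, contradicting the defining inequality for $z_0$. Hence $z_0\in\rho(\Theta(N))$, which proves the first inclusion.

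For the ``in particular'' statement, let $z_0$ be a zero of $\sum_k p_k(\Re z,\Im z)$ and suppose $z_0\notin\{z: p_j(\Re z,\Im z)=0\ \forall j\}$, so that $p_{j_0}(\Re z_0,\Im z_0)\ne 0$ for some $j_0$. Then $|p_{j_0}(\Re z_0,\Im z_0)| > 0 = \|R_{j_0}R_{j_0}^*\|\cdot 0 = \|R_{j_0}R_{j_0}^*\|\cdot|\sum_k p_k(\Re z_0,\Im z_0)|$, so $z_0$ lies in the left-hand set for $j=j_0$ and therefore in $\rho(\Theta(N))$ by the first part. This gives exactly the claimed dichotomy for the zeros of $\sum_k p_k(\Re z,\Im z)$.

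The main obstacle I anticipate is being careful about what ``$p_j(\Re z,\Im z)$'' means as opposed to $p_j(\Theta(A),\Theta(B))$ and making the passage between the spectral-measure/functional-calculus language and the approximate-eigenvector language rigorous: one needs $\Theta(A),\Theta(B)$ to be the real and imaginary parts of the single normal operator $\Theta(N)$ and to be commuting selfadjoint operators, which follows from \thref{normtransf} together with $\Theta$ being a $*$-homomorphism, but the bookkeeping — identifying $r(\Theta(A),\Theta(B))$ with $\int r(\Re z,\Im z)\,dE(z)$ for the joint/normal spectral measure $E$ of $\Theta(N)$, so that $\|r(\Theta(A),\Theta(B))\| = \sup_{\sigma(\Theta(N))}|r(\Re z,\Im z)|$ — must be spelled out. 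Everything else is a routine limiting argument with unit approximate eigenvectors.
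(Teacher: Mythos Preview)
Your argument is correct. It differs from the paper's proof in the way the spectral theorem is invoked, but both approaches rest on the same identity \eqref{heaybab}.

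The paper works with the spectral measure $E$ of $\Theta(N)$ directly: it introduces the open sets
\[
   \Delta_n=\Big\{z: |p_j(\Re z,\Im z)|^2>\tfrac1n+\|R_jR_j^*\|^2\,\Big|\sum_k p_k(\Re z,\Im z)\Big|^2\Big\},
\]
computes $\|p_j(\Theta(A),\Theta(B))x\|^2$ as a spectral integral for $x\in E(\Delta_n)\mathcal H$, and compares it with $\|R_jR_j^*\big(\sum_k p_k(\Theta(A),\Theta(B))\big)x\|^2$; by \eqref{heaybab} these two norms are equal, so the strict inequality forces $x=0$, i.e.\ $E(\Delta_n)=0$, and hence $\Delta_n\subseteq\rho(\Theta(N))$. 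You instead argue pointwise by contradiction with an approximate eigensequence $(u_n)$ for $z_0$; for a normal operator one also has $(\Theta(N)^*-\bar z_0)u_n\to 0$, whence $(\Theta(A)-\Re z_0)u_n\to 0$ and $(\Theta(B)-\Im z_0)u_n\to 0$, and then \eqref{heaybab} forces the scalar inequality $|p_j(\Re z_0,\Im z_0)|\le\|R_jR_j^*\|\,|\sum_k p_k(\Re z_0,\Im z_0)|$. Your route is slightly more elementary in that it avoids setting up the spectral integral explicitly and only uses that for normal operators the spectrum equals the approximate point spectrum; the paper's route has the advantage of packaging the estimate uniformly over the open set $\Delta_n$ in one stroke. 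Both are entirely standard readings of the spectral theorem, and your derivation of the ``in particular'' clause is identical in spirit to what the paper intends.
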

\begin{proof}
Let $n\in \bb N$ and set 
\[
    \Delta_n:= \{z\in \bb C : |p_j(\Re z,\Im z)|^2 > \frac{1}{n} + \| R_j R_j^* \|^2 \cdot |\sum_{k=1}^m p_k(\Re z,\Im z)|^2 \} \,.
\]
For $x\in E(\Delta_n)(\mc H)$, where $E$ denotes $\Theta(N)$'s special measure, we then have
\begin{multline*}
    \| p_j(\Theta(A),\Theta(B)) x \|^2 = \int_{\Delta_n} |p_j(\Re \zeta,\Im \zeta)|^2 \, d(E(\zeta)x,x) \geq \\ 
	\int_{\Delta_n} \frac{1}{n} \, d(E(\zeta)x,x) +
	\| R_j R_j^* \|^2 \int_{\Delta_n} |\sum_{k=1}^m p_k(\Re \zeta,\Im \zeta)|^2 \, d(E(\zeta)x,x) \\ \geq
	\frac{1}{n} \|x\|^2 + \| R_j R_j^* \big(\sum_{k=1}^m p_k(\Theta(A),\Theta(B))\big)x \|^2 \,.
\end{multline*}
By \eqref{heaybab} this inequality can only hold for $x=0$. Since $\Delta_n$ is open,
by the Spectral Theorem for normal operators on Hilbert spaces we have $\Delta_n \subseteq \rho(N)$.
The asserted inclusion now follows from 
\[
  \{z\in \bb C : |p_j(\Re z,\Im z)| > \| R_j R_j^* \| \cdot |\sum_{k=1}^m p_k(\Re z,\Im z)| \} = 
  \bigcup_{n\in\bb N} \Delta_n \,.
\]
\end{proof}

In the following let $I$ the ideal $\langle p_1,\dots,p_m\rangle$ generated by 
the real definitizing polynomials $p_1,\dots,p_m$ in the ring $\bb C[x,y]$.
The variety $V(I)$ is the set of all common zeros $a=(a_1,a_2) \in \bb C^2$ of all
$p\in I$. Clearly, $V(I)$ coincides with the set of all $a\in \bb C^2$ such that
$p_1(a_1,a_2) = \dots = p_m(a_1,a_2) = 0$. $V_{\bb R}(I)$ is the set of all $a\in \bb R^2$,
which belong to $V(I)$. It is convenient for our purposes, to 
consider $V_{\bb R}(I)$ as a subset of $\bb C$:
\begin{align}\label{nullstmereal}
    V_{\bb R}(I) :&=
      \{z\in \bb C: f(\Re z,\Im z) = 0 \ \text{ for all } \ f\in I\} 
      \\ & \nonumber = \{z\in \bb C: p_k(\Re z,\Im z) = 0 \ \text{ for all } \ k \in \{1,\dots,m\}\} \,.
\end{align}

\begin{corollary}\thlab{korvda}
    Let $E$ denote the special measure of $\Theta(N)$. Then we have
\[
	R_jR_j^* \, E(\bb C \setminus V_{\bb R}(I)) = E(\bb C \setminus V_{\bb R}(I)) \, R_jR_j^* =
	\int_{\bb C \setminus V_{\bb R}(I)} \frac{p_j(\Re z,\Im z)}{\sum_{k=1}^m p_k(\Re z,\Im z)} \, dE(z) \,. 
\]
\end{corollary}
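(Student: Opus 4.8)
The plan is to verify the asserted identity by a functional-calculus / measure-theoretic computation on the Hilbert space $\mc H$, exploiting \eqref{heaybab} from \thref{existtreans} together with \thref{speknorm}. First I would record that $R_jR_j^*$ lies in $(T^+T)'$, hence commutes with $\Theta(N)$ and therefore with its spectral measure $E$; in particular $R_jR_j^*\,E(\Delta) = E(\Delta)\,R_jR_j^*$ for every Borel set $\Delta$, which already gives the first equality in the claim with $\Delta = \bb C\setminus V_{\bb R}(I)$. So the real content is the second equality
\[
    R_jR_j^* \, E(\bb C\setminus V_{\bb R}(I)) = \int_{\bb C\setminus V_{\bb R}(I)} \frac{p_j(\Re z,\Im z)}{\sum_{k=1}^m p_k(\Re z,\Im z)} \, dE(z) \,,
\]
and the crucial point to address is that the integrand is a well-defined bounded measurable function on $\sigma(\Theta(N))\setminus V_{\bb R}(I)$.

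The boundedness/measurability I would get directly from \thref{speknorm}: that lemma says that on $\sigma(\Theta(N))$ one has $|p_j(\Re z,\Im z)| \le \|R_jR_j^*\|\cdot|\sum_k p_k(\Re z,\Im z)|$, so wherever the denominator is nonzero the quotient is bounded in modulus by $\|R_jR_j^*\|$; and by the second part of \thref{speknorm} the denominator $\sum_k p_k(\Re z,\Im z)$ can only vanish on $\sigma(\Theta(N))$ at points where every $p_j$ vanishes, i.e.\ exactly on $V_{\bb R}(I)$ (using the description \eqref{nullstmereal}; note $V(I)\setminus\bb R^2$ plays no role here since $E$ is supported on $\sigma(\Theta(N))\subseteq\bb C$ and the $\Re z,\Im z$ substitution makes the relevant zero set $V_{\bb R}(I)$). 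Hence the integrand is a genuine bounded Borel function on $\sigma(\Theta(N))\setminus V_{\bb R}(I)$, and the integral on the right-hand side makes sense as an element of $B(\mc H)$.

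Next I would evaluate both sides against $E(\bb C\setminus V_{\bb R}(I))$. Write $g := \sum_{k=1}^m p_k(\Re\cdot,\Im\cdot)$ and $h := p_j(\Re\cdot,\Im\cdot)$, both continuous functions on $\bb C$, and let $P := E(\bb C\setminus V_{\bb R}(I))$. By \eqref{heaybab} (applied via the functional calculus of the normal operator $\Theta(N)$, using that $p_j(\Theta(A),\Theta(B)) = \int h\,dE$ and $\sum_k p_k(\Theta(A),\Theta(B)) = \int g\,dE$) we have
\[
    \int h \, dE = R_jR_j^* \int g\, dE \,.
\]
Multiply this on both sides by $P$ and by the bounded function $1/g$ restricted to $\bb C\setminus V_{\bb R}(I)$: since on $\bb C\setminus V_{\bb R}(I)$ the function $g$ has no zeros on $\sigma(\Theta(N))$, the operator $\int_{\bb C\setminus V_{\bb R}(I)} (1/g)\,dE$ is a well-defined bounded two-sided inverse of $(\int g\,dE)\,P$ on $P\mc H$. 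Multiplicativity of the spectral integral then yields
\[
    \int_{\bb C\setminus V_{\bb R}(I)} \frac{h}{g}\, dE = \Big(\int \tfrac{1}{g}\mathds{1}_{\bb C\setminus V_{\bb R}(I)}\, dE\Big)\Big(\int h\, dE\Big) = \Big(\int \tfrac{1}{g}\mathds{1}_{\bb C\setminus V_{\bb R}(I)}\, dE\Big) R_jR_j^* \Big(\int g\, dE\Big) = R_jR_j^*\, P \,,
\]
where in the last step I commuted $R_jR_j^*$ past the spectral-measure factor (legitimate since $R_jR_j^* \in (T^+T)' = (\Theta(TT^+))'$ commutes with $E$) and used $(\int (1/g)\mathds{1}_{\bb C\setminus V_{\bb R}(I)}\,dE)(\int g\,dE) = E(\bb C\setminus V_{\bb R}(I)) = P$. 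This is exactly the desired equality.

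The main obstacle, and the step deserving the most care, is the well-definedness and boundedness of the quotient $p_j/(\sum_k p_k)$ as a function on $\sigma(\Theta(N))\setminus V_{\bb R}(I)$: one must be sure that the zero set of the denominator inside $\sigma(\Theta(N))$ is contained in $V_{\bb R}(I)$ (so removing $V_{\bb R}(I)$ really removes all singularities), which is precisely what the "in particular" part of \thref{speknorm} provides, and one must handle the possibility that $\sigma(\Theta(N))$ is not compactly bounded away from $V_{\bb R}(I)$ — but the pointwise bound $|p_j| \le \|R_jR_j^*\|\,|\sum_k p_k|$ from \thref{speknorm} gives a uniform bound on the quotient regardless. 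Everything else is routine manipulation with the spectral measure $E$ and the commutation relation $R_jR_j^* E = E R_jR_j^*$.
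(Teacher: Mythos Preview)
Your proof has a genuine gap at the step where you introduce $\int_{\bb C\setminus V_{\bb R}(I)}(1/g)\,dE$ and call it ``a well-defined bounded two-sided inverse of $(\int g\,dE)\,P$ on $P\mc H$''. Nothing guarantees that $1/g$ is bounded on $\sigma(\Theta(N))\setminus V_{\bb R}(I)$: the function $g=\sum_k p_k(\Re\cdot,\Im\cdot)$ vanishes at the points of $V_{\bb R}(I)$, and such points may well be accumulation points of $\sigma(\Theta(N))$ (this is precisely the case singled out later in \eqref{fn8qw3bab}). So $1/g$ can blow up near $V_{\bb R}(I)$, the spectral integral $\int (1/g)\,\mathds 1_{\bb C\setminus V_{\bb R}(I)}\,dE$ need not define a bounded operator, and $(\int g\,dE)\,P$ is in general only injective with dense range on $P\mc H$, not boundedly invertible. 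The ratio $h/g$ is bounded by \thref{speknorm}, but the individual factor $1/g$ is not.

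The paper circumvents exactly this obstacle by a density argument: rather than inverting $\int g\,dE$, it checks the identity on vectors of the form $x=\big(\sum_k p_k(\Theta(A),\Theta(B))\big)y$ with $y\in\ran E(\bb C\setminus V_{\bb R}(I))$, where the computation reduces to \eqref{heaybab}, and then uses that such $x$ are dense in $\ran E(\bb C\setminus V_{\bb R}(I))$ (injectivity of the restricted multiplication operator, which follows from \thref{speknorm}). Your argument can be repaired along the same lines, or equivalently by replacing the single unbounded factor $1/g$ with an approximation $\int (1/g)\,\mathds 1_{\{|g|\ge\varepsilon\}}\,dE$ and letting $\varepsilon\downarrow 0$; either way one is forced to pass through a limit rather than a direct algebraic inversion.

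A minor side remark: your justification that $R_jR_j^*$ commutes with $E$ because ``$R_jR_j^*\in (T^+T)'$'' is not a valid inference on its own, since $(T^+T)'$ is not abelian; the correct reference is \thref{normtransf}, which gives $E(\Delta)\in (R_jR_j^*)'$ directly.
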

\begin{proof}
    First note that the integral on the right hand side exists as a bounded operator, because 
    by \thref{speknorm} we have $|p_j(\Re z,\Im z)| \leq \| R_j R_j^* \| \cdot |\sum_{k=1}^m p_k(\Re z,\Im z)|$
    for $z\in\sigma(\Theta(N))$.
    The first equality is known from \thref{normtransf}.

    Concerning the second equality, note that both sides vanish on the range of 
    $E(V_{\bb R}(I))$. 
    Its orthogonal complement $\mc Q:= \ran E(\bb C \setminus V_{\bb R}(I))$ is invariant under
    \[
	\int \big(\sum_{k=1}^m p_k(\Re z,\Im z)\big) \, dE(z) = \sum_{k=1}^m p_k(\Theta(A),\Theta(B)) \,.
    \]
    By \thref{speknorm} the restriction of this operator to $\mc Q$
    is injective, and hence, has dense range in $\mc Q$.
    If $x$ belongs to this dense range, i.e.\ 
    $x=\big(\sum_{k=1}^m p_k(\Theta(A),\Theta(B))\big) y$ with $y\in \mc Q$, then
    \begin{align*}
	\int_{\bb C \setminus V_{\bb R}(I)} & \frac{p_j(\Re z,\Im z)}{\sum_{k=1}^m p_k(\Re z,\Im z)} \, dE(z) x = 
	\int_{\bb C \setminus V_{\bb R}(I)} p_j(\Re z,\Im z) \, dE(z) y \\ & =
	p_j(\Theta(A),\Theta(B))y = R_j R_j^* \big(\sum_{k=1}^m p_k(\Theta(A),\Theta(B))\big) y = R_j R_j^* x \,.
\end{align*}
    By a density argument the second asserted equality of the present corollary holds true on $\mc Q$
    and in turn on $\mc H$. 
\end{proof}

\begin{remark}\thlab{nullrem}
    In \thref{defNspaces} the case that $p_j(A,B) = 0$ for some $j$, or even for all $j$, is not excluded, and 
    yields $\mc H_j=\{0\}$, $T_j=0$ and $R_j=0$ (in \thref{existtreans}), or even $\mc H=\{0\}$ and $T=0$.
    Also the remaining results hold true, if we interpret $\rho(R)$ as $\bb C$ and $\sigma(R)$ as $\emptyset$ 
    for the only possible linear operator $R=(0\mapsto 0)$ on the vector space $\{0\}$.
\end{remark}

\section{An Abstract Functional Calculus}
\label{abstrfunccal}

In this section let $\mc K$ be again a Krein space, $N\in B(\mc K)$ be a definitizable normal operator.
Let $I$ be the ideal in $\bb C[x,y]$, which is generated by all real definitizing polynomials.
By the ascending chain condition for the ring $\bb C[x,y]$ (see for example \cite{CLO1}, 
Theorem 7, Chapter 2, \S5) $I$ is generated by finitely many
real definitizing polynomials $p_1,\dots,p_m$, i.e.\ $I = \langle p_1,\dots,p_m \rangle$.
We employ the same notion as in the previous sections for these polynomials
$p_1,\dots,p_m$. In particular, $E_j$ ($E$) denotes the spectral measure
of $\Theta_j(N)$ on $\mc H_j$ ($\Theta(N)$ on $\mc H$).

We also make the convention that for $p\in \bb C[x,y]$ and $z\in \bb C$ we write
$p(z)$ short for $p(\Re z,\Im z)$.

\begin{lemma}\thlab{lre0}
    For a bounded and measurable $f: \sigma(\Theta(N)) \to \bb C$ and 
    $j\in \{1,\dots,m\}$ we have
    \begin{multline*}
	\Xi_j\left( \int_{\sigma(\Theta_j(N))} f \, dE_j \right) = \\ 
	\Xi\left( \int_{\sigma(\Theta(N))\setminus V_{\bb R}(I)} f\frac{p_j}{\sum_{l=1}^m p_l} \, dE 
	    + R_jR_j^* \int_{\sigma(\Theta(N))\cap V_{\bb R}(I)} f \, dE \right) \,. 
    \end{multline*}
\end{lemma}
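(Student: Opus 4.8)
The plan is to obtain the identity by combining equation~\eqref{zuef2} with \thref{korvda}, after splitting the integration domain along $V_{\bb R}(I)$.

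First, \eqref{zfv6lkc} shows $N\in(T_jT_j^+)'\cap(TT^+)'$, so $N$ meets the hypotheses of \thref{normtransf}; in particular $\Theta(N)$ is normal on $\mc H$ with spectral measure $E$, and \eqref{zuef2} is at our disposal. Taking $h:=f$ there gives
\[
    \Xi_j\!\left(\int_{\sigma(\Theta_j(N))} f\,dE_j\right) = \Xi\!\left(R_jR_j^*\int_{\sigma(\Theta(N))} f\,dE\right)\,.
\]
Since $V_{\bb R}(I)=\{z\in\bb C:p_l(z)=0\text{ for all }l\}$ is closed, hence Borel, I would split $\int_{\sigma(\Theta(N))}f\,dE$ into the integral over $\sigma(\Theta(N))\setminus V_{\bb R}(I)$ plus the one over $\sigma(\Theta(N))\cap V_{\bb R}(I)$. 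After left multiplication by $R_jR_j^*$ the second summand is exactly the corresponding term on the right-hand side of the assertion, so it remains to identify $R_jR_j^*\int_{\sigma(\Theta(N))\setminus V_{\bb R}(I)}f\,dE$.

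For that I would write $\int_{\sigma(\Theta(N))\setminus V_{\bb R}(I)}f\,dE=E(\bb C\setminus V_{\bb R}(I))\int f\,dE$ and insert \thref{korvda}, i.e.\ $R_jR_j^*E(\bb C\setminus V_{\bb R}(I))=\int_{\bb C\setminus V_{\bb R}(I)}\frac{p_j}{\sum_{l=1}^m p_l}\,dE$ (which coincides with the same integral over $\sigma(\Theta(N))\setminus V_{\bb R}(I)$, as $E$ is concentrated on $\sigma(\Theta(N))$). By \thref{speknorm} the function $\frac{p_j}{\sum_{l=1}^m p_l}$ is well defined, measurable and bounded by $\|R_jR_j^*\|$ on $\sigma(\Theta(N))\setminus V_{\bb R}(I)$; in particular $\sum_{l=1}^m p_l$ has no zero there. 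Using multiplicativity of the spectral calculus,
\[
    R_jR_j^*\int_{\sigma(\Theta(N))\setminus V_{\bb R}(I)}f\,dE = \left(\int_{\bb C\setminus V_{\bb R}(I)}\frac{p_j}{\sum_{l=1}^m p_l}\,dE\right)\left(\int f\,dE\right) = \int_{\sigma(\Theta(N))\setminus V_{\bb R}(I)} f\,\frac{p_j}{\sum_{l=1}^m p_l}\,dE\,.
\]
Reassembling the two summands and applying $\Xi$ then yields the claim.

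I do not expect a genuine obstacle here; the work is essentially bookkeeping. The points requiring care are the applicability of \eqref{zuef2} via \eqref{zfv6lkc} and \thref{normtransf}, and—probably the most delicate ingredient—the boundedness and measurability of the quotient integrand, which rests on \thref{speknorm} and, through it, on the absence of zeros of $\sum_{l=1}^m p_l$ on $\sigma(\Theta(N))\setminus V_{\bb R}(I)$, together with the observation that $V_{\bb R}(I)$ is Borel so the domain split is legitimate. The degenerate cases of \thref{nullrem}, where $R_j=0$ or $\mc H=\{0\}$, are automatic: \thref{korvda} then forces all the terms occurring above to vanish.
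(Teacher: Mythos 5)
Your argument is correct and follows the paper's own proof essentially verbatim: apply \eqref{zuef2} with $h=f$, split the integral along $V_{\bb R}(I)$, and convert $R_jR_j^*E(\bb C\setminus V_{\bb R}(I))$ into the multiplication by $p_j/\sum_l p_l$ via \thref{korvda}. The extra remarks on boundedness of the quotient (via \thref{speknorm}) and the degenerate cases are sound but not needed beyond what \thref{korvda} already guarantees.
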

\begin{proof}
    By \eqref{zuef2} the left hand side coincides with
    \[
	\Xi\left( R_jR_j^* \int_{\sigma(\Theta(N)) \setminus V_{\bb R}(I)} f \, dE 
		  + R_jR_j^* \int_{\sigma(\Theta(N)) \cap V_{\bb R}(I)} f \, dE \right) 
	\,.
    \]
    $\int_{\sigma(\Theta(N)) \setminus V_{\bb R}(I)} f \, dE = 
      E(\bb C \setminus V_{\bb R}(I)) \int_{\sigma(\Theta(N)) \setminus V_{\bb R}(I)} f \, dE$
    together with \thref{korvda} prove the equality. 
\end{proof}

\begin{lemma}\thlab{lre1}
    Let $f,g: \sigma(\Theta(N)) \to \bb C$ be bounded and measurable,
    and let $r\in \bb C[x,y]$. For $j,k\in \{1,\dots,m\}$ 
    we then have
    \begin{align}\label{rthz23}
	r(A,B) \, \Xi_j( \int_{\sigma(\Theta_j(N))} f \, dE_j ) & = 
	\Xi_j( \int_{\sigma(\Theta_j(N))} f \, dE_j ) \, r(A,B) \\ \nonumber & =
	\Xi_j( \int_{\sigma(\Theta_j(N))} r f \, dE_j ) \,,
    \end{align}
    and
    \begin{equation}\label{rthz23b}
	\Xi_j\left(\int_{\sigma(\Theta_j(N))} f \, dE_j \right)
	      \Xi_k\left(\int_{\sigma(\Theta_k(N))} g \, dE_k\right)
    \end{equation}
    \[
	= \Xi\left(\int_{\sigma(\Theta(N))} fg \, \frac{p_jp_k}{\sum_{l=1}^m p_l} \, dE\right)
    \]
    \[
	= \Xi_j\left(\int_{\sigma(\Theta_j(N))} fg \, p_k \, dE_j \right) = 
	\Xi_k\left(\int_{\sigma(\Theta_k(N))} fg \, p_j \, dE_k \right) \,. 
    \]
\end{lemma}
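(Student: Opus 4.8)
The plan is to reduce everything to $\Theta$-level identities and then transport them back to $\mc K$ via $\Xi$. The two main tools are: (i) the multiplicativity relation $\Xi_j = \Xi\circ\Lambda_j$ together with $\Lambda_j\circ\Gamma_j(D)=DR_jR_j^*$ for $D\in(R_jR_j^*)'$, which is how \eqref{zuef2} was obtained; and (ii) \thref{lre0}, which expresses $\Xi_j(\int f\,dE_j)$ purely in terms of $\Xi$ applied to operators built from $E$ on $\mc H$. A recurring point is that $R_jR_j^*$ commutes with $E$ (from \thref{korvda} / \thref{normtransf}) and with $\sum_l p_l(\Theta(A),\Theta(B))$ (from \thref{existtreanspost}), and that $p_j(\Theta(A),\Theta(B))=R_jR_j^*\sum_l p_l(\Theta(A),\Theta(B))$ by \eqref{heaybab}, while on $\bb C\setminus V_{\bb R}(I)$ one has the sharper $R_jR_j^*\,E(\bb C\setminus V_{\bb R}(I))=\int_{\bb C\setminus V_{\bb R}(I)}\frac{p_j}{\sum_l p_l}\,dE$ by \thref{korvda}.

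For \eqref{rthz23}: the commutation of $r(A,B)$ with $\Xi_j(\int f\,dE_j)$ follows because $\Xi_j(\int f\,dE_j)$ lies in the range of $\Xi_j$ restricted to $(R_j^*R_j)'\cap(T_j^+T_j)'$-type operators and $r(A,B)\in\{A,B\}'\subseteq(T_jT_j^+)'\cap(TT^+)'$; concretely, write $r(A,B)=\Xi_j(\Theta_j(r(A,B)))\cdot(\text{factor})$ — more cleanly, use \eqref{tt2479}-style identities together with the intertwining $\Theta_j(C)T_j^+=T_j^+C$ and $T^+C=\Theta(C)T^+$ from \thref{thetadefeig}. I would in fact prove all three members of \eqref{rthz23} equal by pushing $r$ inside: $r(A,B)T_j = T_j r(\Theta_j(A),\Theta_j(B))$ (a consequence of the intertwining), so $r(A,B)\,T_j(\int f\,dE_j)T_j^+ = T_j\,(r(\Theta_j(A),\Theta_j(B))\int f\,dE_j)\,T_j^+ = T_j(\int rf\,dE_j)T_j^+$, using that $r(\Theta_j(A),\Theta_j(B))=\int r\,dE_j$ by the spectral calculus for the normal operator $\Theta_j(N)$; the other equality is symmetric.

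For \eqref{rthz23b}: apply \thref{lre0} to both $\Xi_j(\int f\,dE_j)$ and $\Xi_k(\int g\,dE_k)$, or more efficiently compute $\Xi_j(\int f\,dE_j)\,\Xi_k(\int g\,dE_k)=T_j(\int f\,dE_j)T_j^+T_k(\int g\,dE_k)T_k^+$ and use $T_j^+T_k = R_j^*T^+TR_k = R_j^*\big(\sum_l p_l(\Theta(A),\Theta(B))\big)R_k$ from \eqref{zuef3}/\thref{existtreanspost}, together with the fact that $\int f\,dE_j$ and $\int g\,dE_k$ intertwine appropriately. Substituting $T_j=TR_j$, $T_k=TR_k$ and using $\Lambda_j\circ\Gamma_j$ to rewrite $R_j(\int f\,dE_j)R_j^* = R_jR_j^*\int f\,dE$ (valid by \thref{normtransf} since $\int f\,dE\in(R_jR_j^*)'$), one gets $\Xi\big(R_jR_j^*(\int f\,dE)\,(\sum_l p_l(\Theta(A),\Theta(B)))\,R_kR_k^*(\int g\,dE)\big)$; now commute the central factors (everything here commutes: $E$, $R_jR_j^*$, $R_kR_k^*$, $\sum_l p_l(\Theta(A),\Theta(B))$ all lie in a commutative algebra) and use \eqref{heaybab} in the form $R_jR_j^*R_kR_k^*\sum_l p_l(\Theta(A),\Theta(B)) = \frac{p_jp_k}{\sum_l p_l}\cdot\sum_l p_l(\Theta(A),\Theta(B))$ away from $V_{\bb R}(I)$ — more precisely, on $\bb C\setminus V_{\bb R}(I)$ invoke \thref{korvda} to replace each $R_jR_j^*E(\bb C\setminus V_{\bb R}(I))$ by $\int\frac{p_j}{\sum_l p_l}\,dE$, and check separately that the $V_{\bb R}(I)$-part contributes $0$ because $\sum_l p_l$ vanishes there so the middle factor $\sum_l p_l(\Theta(A),\Theta(B))$ kills $E(V_{\bb R}(I))$. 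This yields the first equality; the last two then follow by running \thref{lre0} (or \eqref{zuef2} and \thref{korvda}) backwards, absorbing one copy of $p_k$ (resp.\ $p_j$) into $dE_j$ (resp.\ $dE_k$).

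The main obstacle I expect is the bookkeeping around $V_{\bb R}(I)$: one must be careful that $\frac{p_jp_k}{\sum_l p_l}$ is only defined (and bounded, by \thref{speknorm}) on $\sigma(\Theta(N))\setminus V_{\bb R}(I)$, so every manipulation of the right-hand integral must be understood as taking place on $\ran E(\bb C\setminus V_{\bb R}(I))$, with the complementary piece handled by the vanishing of $\sum_l p_l$ on $V_{\bb R}(I)$; and one must confirm that the central operator $\sum_l p_l(\Theta(A),\Theta(B))$ genuinely appears sandwiched between the two integrals, which is exactly what $T_j^+T_k=R_j^*(\sum_l p_l(\Theta(A),\Theta(B)))R_k$ delivers. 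Everything else is routine commutativity of the Hilbert-space spectral calculus and the density/injectivity arguments already used repeatedly above.
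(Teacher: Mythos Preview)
Your proposal is correct and follows essentially the same route as the paper. For \eqref{rthz23} the paper invokes \thref{Xidefeig} from \cite{KaPr2014} to get $r(A,B)\Xi_j(D)=\Xi_j(r(\Theta_j(A),\Theta_j(B))D)$ and then sets $D=\int f\,dE_j$, which is exactly your intertwining argument; for \eqref{rthz23b} the paper rewrites both factors via \eqref{zuef2} as $\Xi(R_jR_j^*\int f\,dE)\,\Xi(R_kR_k^*\int g\,dE)$, uses $\Xi(D_1)\Xi(D_2)=\Xi(\Theta(TT^+)D_1D_2)$ with $\Theta(TT^+)=(\int\sum_l p_l\,dE)\,E(\bb C\setminus V_{\bb R}(I))$, and then applies \thref{korvda}---precisely your strategy of inserting $T^+T=\sum_l p_l(\Theta(A),\Theta(B))$ in the middle, handling the $V_{\bb R}(I)$ part by the vanishing of $\sum_l p_l$ there, and deducing the last two equalities from \thref{lre0}.
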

\begin{proof}
    By \thref{Xidefeig} in \cite{KaPr2014} we have
    \[
    r(A,B)\,\Xi_j(D)=\Xi_j(\Theta(r(A,B)) D) = \Xi_j(r(\Theta_j(A),\Theta_j(B)) D)\,,
    \]
    \[
    \Xi_j(D)r(A,B)=\Xi_j( D \, \Theta_j(r(A,B))) = \Xi_j(D \, r(\Theta_j(A),\Theta_j(B)))
    \]
    for $D\in (T^+ T)'$. For $D=\int_{\sigma(\Theta_j(N))} f \, dE_j$ this 
    implies \eqref{rthz23}.
    
    According to \eqref{zuef2} the expression in \eqref{rthz23b} coincides with
    \[
	\Xi\left(R_jR_j^* \int_{\sigma(\Theta(N))} f \, dE \right)
	      \Xi\left(R_kR_k^* \int_{\sigma(\Theta(N))} g \, dE\right) \,.
    \]
    By \thref{Xidefeig} and \thref{thetadefeig} in \cite{KaPr2014}, we also know that 
    $\Xi(D_1)\Xi(D_2)=\Xi(T^+T D_1D_2)=\Xi(\Theta(TT^+) D_1D_2)$, where  
    (see \thref{defNspaces} and \eqref{nullstmereal})
    \[
	\Theta(TT^+) = \sum_{l=1}^m p_l(\Theta(A),\Theta(B)) = \int \sum_{l=1}^m p_l \, dE =
	(\int \sum_{l=1}^m p_l \, dE) \, E(\bb C \setminus V_{\bb R}(I)) \,.
    \]
    Therefore, by \thref{korvda} and the fact, that 
    $E(\bb C \setminus V_{\bb R}(I))$ commutes with $\int_{\sigma(\Theta(N))} f \, dE$, 
    \eqref{rthz23b} can be written as
    \[
	\Xi\left( (\int \sum_{l=1}^m p_l \, dE) 
	    (\int \frac{p_j}{\sum_{l=1}^m p_l} \, dE) (\int f \, dE)
	    (\int \frac{p_k}{\sum_{l=1}^m p_l} \, dE) (\int g \, dE)\right) =
    \]
    \[
	\Xi\left(\int_{\sigma(\Theta(N))} fg \, \frac{p_jp_k}{\sum_{l=1}^m p_l} \, dE\right) \,.
    \]
    The remaining equalities follow from \thref{lre0} since the respective integrands
    vanish on $V_{\bb R}(I)$.
\end{proof}

\begin{lemma}\thlab{lre2}
    For a bounded and measurable $f: \sigma(\Theta(N)) \to \bb C$ and $j\in \{1,\dots,m\}$ the 
    operator $\Xi_j\left( \int_{\sigma(\Theta_j(N))} f \, dE_j \right)$ belongs to $\{N,N^+\}''$.
\end{lemma}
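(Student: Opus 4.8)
The plan is to show that the operator $\Xi_j\!\left(\int f\,dE_j\right)$ commutes with everything in $\{N,N^+\}'=\{A,B\}'$, so it lies in the bicommutant $\{N,N^+\}''$. The natural route is to approximate $f$ by polynomials in $z,\bar z$ (equivalently in $\Re z,\Im z$) in a way compatible with the integral, and to exploit the fact that $\Theta_j$ and $\Xi_j$ intertwine the relevant commutants, a fact established in the cited \thref{thetadefeig} and \thref{Xidefeig} from \cite{KaPr2014} and used throughout \thref{normtransf}, \thref{comreg}, and \thref{lre1}.

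First I would fix an arbitrary $S\in\{A,B\}'=\{N,N^+\}'$ and observe, using \eqref{zfv6lkc}, that $S\in(T_jT_j^+)'\cap(TT^+)'$, so that $\Theta_j(S)$ and $\Theta(S)$ are defined; moreover, by \thref{thetadefeig} in \cite{KaPr2014}, $\Theta_j(S)T_j^+=T_j^+S$ and $S T_j=T_j\Theta_j(S)$. Since $S$ commutes with $A$ and $B$, hence with $N=A+iB$ and $N^+=A-iB$, the operator $\Theta_j(S)$ commutes with $\Theta_j(N)$ and $\Theta_j(N)^+=\Theta_j(N^+)$ because $\Theta_j$ is a $*$-homomorphism; therefore $\Theta_j(S)$ commutes with the spectral measure $E_j$ of the normal operator $\Theta_j(N)$, and with $\int f\,dE_j$ for every bounded measurable $f$ by the usual functional-calculus argument (first for $f$ a polynomial in $z,\bar z$, then by a monotone-class / Riesz-representation density argument exactly as in the proof of \thref{normtransf}). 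Then using $\Xi_j(D)S = T_jDT_j^+S = T_jD\,\Theta_j(S)T_j^+$ and $S\,\Xi_j(D) = S T_j D T_j^+ = T_j\,\Theta_j(S)\,D T_j^+$, the commutation $\Theta_j(S)D=D\Theta_j(S)$ for $D=\int f\,dE_j$ immediately yields $\Xi_j(D)S = S\,\Xi_j(D)$.

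Since $S\in\{N,N^+\}'$ was arbitrary, this shows $\Xi_j\!\left(\int_{\sigma(\Theta_j(N))}f\,dE_j\right)\in\{N,N^+\}''$, as claimed. A small point to be careful about is the degenerate case $p_j(A,B)=0$ flagged in \thref{nullrem}: then $\mc H_j=\{0\}$, $T_j=0$, and $\Xi_j\!\left(\int f\,dE_j\right)=0$, which trivially lies in $\{N,N^+\}''$, so the statement is vacuous there and no separate treatment is needed.

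The main obstacle I anticipate is not conceptual but a matter of bookkeeping: justifying that $\Theta_j(S)$ commutes with $\int f\,dE_j$ for general bounded measurable $f$, rather than just for $f$ polynomial or continuous. This requires passing from polynomials in $z$ and $\bar z$ — which are dense in $C(K)$ on a compact set $K\supseteq\sigma(\Theta_j(N))$ — through the Riesz representation theorem to arbitrary bounded Borel $f$, essentially repeating the density argument already carried out in detail in the proof of \thref{normtransf}. Everything else is a direct application of the intertwining identities $\Xi_j(D)S=T_j D\,\Theta_j(S)\,T_j^+=S\,\Xi_j(D)$ together with the $*$-homomorphism property of $\Theta_j$.
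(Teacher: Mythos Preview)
Your proposal is correct and follows essentially the same route as the paper: pick an arbitrary $C\in\{N,N^+\}'=\{A,B\}'\subseteq(T_jT_j^+)'$, use that $\Theta_j$ is a homomorphism so $\Theta_j(C)$ commutes with $\Theta_j(N)$ and hence with $\int f\,dE_j$, and then apply the intertwining identities from \thref{Xidefeig} in \cite{KaPr2014} to pull $C$ through $\Xi_j$. The only difference is that the paper simply invokes the standard spectral-theoretic fact that an operator commuting with a normal $M$ (and $M^*$) commutes with every $\int f\,dE$, whereas you flag this as a potential obstacle requiring a density argument; no such argument is actually needed beyond the usual spectral theorem.
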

\begin{proof}
    Take $C\in \{N,N^+\}'=\{A,B\}' \subseteq \bigcap_{j=1,\dots,m} (T_jT_j^+)'$; see \eqref{zfv6lkc}.
    From \thref{Xidefeig} in \cite{KaPr2014} we conclude
    \[
	C \, \Xi_j\left( \int_{\sigma(\Theta_j(N))} f \, dE_j \right)
	= \Xi_j\left( \Theta_j(C) (\int_{\sigma(\Theta_j(N))} f \, dE_j) \right) \,.
    \]
    Since $\Theta_j$ is a homomorphism, $\Theta_j(C)$ commutes with $\Theta_j(N)$ and, in turn, with 
    $\int_{\sigma(\Theta_j(N))} f \, dE_j$. Hence, employing \thref{Xidefeig} in \cite{KaPr2014} once 
    more, the above expression coincides with
    \[
	\Xi_j\left( (\int_{\sigma(\Theta_j(N))} f \, dE_j) \, \Theta_j(C) \right) 
	= \Xi_j\left( \int_{\sigma(\Theta_j(N))} f \, dE_j \right) C \,.
    \]
\end{proof}

\begin{definition}\thlab{Psidef}
Denoting by $\mf B\big(\sigma(\Theta(N))\big)$ the $*$-algebra of complex valued, 
bounded and measurable functions on $\sigma(\Theta(N))$, 
for $(r,f_1,\dots,f_m) \in \mc R:=\bb C[x,y]\times \mf B\big(\sigma(\Theta(N))\big) 
\times \dots \times \mf B\big(\sigma(\Theta(N))\big)$ we set
\[
    \Psi(r,f_1,\dots,f_m) := r(A,B) + \sum_{k=1}^m \Xi_k\left( \int_{\sigma(\Theta_k(N))} f_k \, dE_k \right) \,.
\]
By $\mc N$ we denote the set of all $(r,f_1,\dots,f_m) \in \mc R$ such that
\[
    r +\sum_{k=1}^m f_k p_k = 0 \ \ \text{ on } \ \ \sigma(\Theta(N))\setminus V_{\bb R}(I)
\]    
and such that there exist $u_1,\dots,u_m \in \bb C[x,y]$ with \ $r=\sum_{k=1}^m u_k p_k$ \ and
\[
  (f_j + u_j)(z)=0 \ \text{ for } \ j=1,\dots,m, \ z\in V_{\bb R}(I)\cap \sigma(\Theta(N)) \,. 
\]  
\end{definition}

\begin{remark}\thlab{compmist}
    Obviously, $\Psi$ is linear. From $\Xi_j(D^*)=\Xi_j(D)^+$ we easily deduce
    $\Psi(r^\#,\overline{f_1},\dots,\overline{f_m})=\Psi(r,f_1,\dots,f_m)^*$.
    Moreover, $\mc N$ constitutes a linear subspace of $\mc R$ invariant under
    $.^\#: (r,f_1,\dots,f_m)\mapsto (r^\#,\overline{f_1},\dots,\overline{f_m})$.
\end{remark}

\begin{lemma}\thlab{lre3}
      If $(r,f_1,\dots,f_m) \in \mc N$, then $\Psi(r,f_1,\dots,f_m)=0$.
\end{lemma}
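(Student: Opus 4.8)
The plan is to reduce the claim to an identity already available for polynomial functional calculus. Suppose $(r,f_1,\dots,f_m)\in\mc N$, and let $u_1,\dots,u_m\in\bb C[x,y]$ be the polynomials provided by the definition of $\mc N$, so that $r=\sum_k u_kp_k$ and $(f_j+u_j)(z)=0$ for all $z\in V_{\bb R}(I)\cap\sigma(\Theta(N))$ and all $j$. First I would replace the polynomial part: using $r=\sum_k u_kp_k$ and \eqref{tt2479} (with $u=u_k$), write
\[
    r(A,B)=\sum_{k=1}^m p_k(A,B)\,u_k(A,B)=\sum_{k=1}^m \Xi_k\big(u_k(\Theta_k(A),\Theta_k(B))\big)
          =\sum_{k=1}^m \Xi_k\Big(\int_{\sigma(\Theta_k(N))} u_k\,dE_k\Big),
\]
the last step being the identification of the polynomial $u_k(\Theta_k(A),\Theta_k(B))$ with $\int u_k\,dE_k$ via the Spectral Theorem for the normal operator $\Theta_k(N)$. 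Substituting this into $\Psi(r,f_1,\dots,f_m)$ gives
\[
    \Psi(r,f_1,\dots,f_m)=\sum_{k=1}^m \Xi_k\Big(\int_{\sigma(\Theta_k(N))} (f_k+u_k)\,dE_k\Big).
\]

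Next I would analyse each summand $\Xi_k\big(\int (f_k+u_k)\,dE_k\big)$ using \thref{lre0}. That lemma rewrites it as $\Xi$ applied to
\[
    \int_{\sigma(\Theta(N))\setminus V_{\bb R}(I)} (f_k+u_k)\,\frac{p_k}{\sum_l p_l}\,dE
      \;+\; R_kR_k^*\int_{\sigma(\Theta(N))\cap V_{\bb R}(I)} (f_k+u_k)\,dE.
\]
The second integral vanishes: by hypothesis $(f_k+u_k)(z)=0$ for every $z\in V_{\bb R}(I)\cap\sigma(\Theta(N))$, so the integrand is zero $E$-a.e.\ on $\sigma(\Theta(N))\cap V_{\bb R}(I)$. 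Hence $\Psi(r,f_1,\dots,f_m)$ equals $\Xi$ applied to $\int_{\sigma(\Theta(N))\setminus V_{\bb R}(I)} \big(\sum_k (f_k+u_k)\,\frac{p_k}{\sum_l p_l}\big)\,dE$. Now the first defining condition of $\mc N$, namely $r+\sum_k f_kp_k=0$ on $\sigma(\Theta(N))\setminus V_{\bb R}(I)$, combined with $r=\sum_k u_kp_k$, gives $\sum_k (f_k+u_k)p_k=0$ there; dividing by $\sum_l p_l$ (which is nonzero on $\sigma(\Theta(N))\setminus V_{\bb R}(I)$ by \eqref{nullstmereal}) shows the integrand vanishes. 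Therefore $\Psi(r,f_1,\dots,f_m)=\Xi(0)=0$.

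The main point requiring care is the passage $u_k(\Theta_k(A),\Theta_k(B))=\int u_k\,dE_k$: one must know that $\Theta_k(A)=\Re\Theta_k(N)$ and $\Theta_k(B)=\Im\Theta_k(N)$, which follows since $\Theta_k$ is a $*$-homomorphism mapping $N$ to $\Theta_k(N)$, and then invoke the Hilbert-space spectral calculus so that the polynomial in the real and imaginary parts is exactly the spectral integral of the corresponding polynomial $u_k$ against $E_k$. The only other subtlety is making the "vanishes $E$-a.e." steps precise, i.e.\ that $E(\{z\in\sigma(\Theta(N))\cap V_{\bb R}(I): (f_k+u_k)(z)\neq 0\})=0$ and likewise on $\sigma(\Theta(N))\setminus V_{\bb R}(I)$; both are immediate from the pointwise identities in the definition of $\mc N$, since those sets of bad points are empty. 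Everything else is bookkeeping with $\Xi$, $\Xi_k$, $R_kR_k^*$ already packaged in \thref{lre0} and \eqref{tt2479}.
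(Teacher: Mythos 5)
Your argument is correct and is essentially the paper's own proof: both use $r=\sum_k u_kp_k$ together with \eqref{tt2479} to fold $r(A,B)$ into the $\Xi_k$-terms, then apply \thref{lre0} and observe that the integrand over $\sigma(\Theta(N))\setminus V_{\bb R}(I)$ vanishes by the first condition in \thref{Psidef} and the integrand over $\sigma(\Theta(N))\cap V_{\bb R}(I)$ vanishes by the second. One tiny quibble: the non-vanishing of $\sum_l p_l$ on $\sigma(\Theta(N))\setminus V_{\bb R}(I)$ comes from \thref{speknorm} (and \thref{korvda}), not from \eqref{nullstmereal}, which only describes the common zeros of all the $p_k$.
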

\begin{proof}
    Due to \eqref{tt2479} $r=\sum_{k=1}^m u_k p_k$ implies
    \[
      r(A,B) = \sum_{k=1}^m p_k(A,B)u_k(A,B)  
	  = \sum_{k=1}^m  \Xi_k\big( u_k(\Theta_k(A),\Theta_k(B))\big) \,. 
    \]
    From this and \thref{lre0} we obtain
    \[
	\Psi(r,f_1,\dots,f_m) = \sum_{k=1}^m  \Xi_k\big( \int_{\sigma(\Theta_k(N))} (f_k + u_k) \, dE_k \big) =
    \]
    \[
	\Xi\left(\int\limits_{\sigma(\Theta(N))\setminus V_{\bb R}(I)} 
	      \sum_{k=1}^m \frac{f_kp_k + u_kp_k}{\sum_{l=1}^m p_l} \, dE 
	    + \sum_{k=1}^m R_kR_k^* \hspace{-2mm} \int\limits_{\sigma(\Theta(N))\cap V_{\bb R}(I)} 
	    \hspace{-2mm} (f_k + u_k) \, dE \right) = 0 \,.
    \]
\end{proof}

\begin{lemma}\thlab{lre4}
    For $(r,f_1,\dots,f_m), (s,g_1,\dots,g_m) \in \mc R$ have
    \begin{align*}
	\Psi(r,f_1,\dots, & f_m) \, \Psi(s,g_1,\dots,g_m) =
	    \\ & = \Psi(rs, rg_1+sf_1 + f_1 \sum_{k=1}^m g_k p_k,\dots, rg_m+sf_m + f_m \sum_{k=1}^m g_kp_k )
	    \\ & = \Psi(rs, rg_1+sf_1 + g_1 \sum_{k=1}^m f_k p_k,\dots, rg_m+sf_m + g_m \sum_{k=1}^m f_kp_k ) \,.
    \end{align*}
\end{lemma}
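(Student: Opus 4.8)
The plan is to compute the product $\Psi(r,f_1,\dots,f_m)\,\Psi(s,g_1,\dots,g_m)$ by expanding it with the bilinearity of $\Psi$ (noted in \thref{compmist}) and then to reduce every resulting summand to the normal form $\Xi_j(\int_{\sigma(\Theta_j(N))} \cdot \, dE_j)$ by means of \thref{lre1}. Writing out the product, one obtains four kinds of terms: the ``polynomial $\times$ polynomial'' term $r(A,B)\,s(A,B)$; the ``polynomial $\times$ integral'' terms $r(A,B)\,\Xi_k(\int_{\sigma(\Theta_k(N))} g_k\,dE_k)$ for $k=1,\dots,m$; the ``integral $\times$ polynomial'' terms $\Xi_j(\int_{\sigma(\Theta_j(N))} f_j\,dE_j)\,s(A,B)$ for $j=1,\dots,m$; and the ``integral $\times$ integral'' terms $\Xi_j(\int_{\sigma(\Theta_j(N))} f_j\,dE_j)\,\Xi_k(\int_{\sigma(\Theta_k(N))} g_k\,dE_k)$ for $j,k=1,\dots,m$.

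Next I would dispose of each kind. The first term is $(rs)(A,B)$, because $A$ and $B$ commute and $r\mapsto r(A,B)$ is an algebra homomorphism. For the mixed terms I would invoke \eqref{rthz23} of \thref{lre1}, which gives $r(A,B)\,\Xi_k(\int g_k\,dE_k)=\Xi_k(\int r g_k\,dE_k)$ and $\Xi_j(\int f_j\,dE_j)\,s(A,B)=\Xi_j(\int s f_j\,dE_j)$. For the ``integral $\times$ integral'' terms I would use \eqref{rthz23b} of \thref{lre1}, which supplies the two equal expressions $\Xi_j(\int f_j\,dE_j)\,\Xi_k(\int g_k\,dE_k)=\Xi_j(\int f_j g_k p_k\,dE_j)=\Xi_k(\int f_j g_k p_j\,dE_k)$. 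Here one should record that the new integrands $r g_k$, $s f_j$, $f_j g_k p_k$, being products of bounded measurable functions on the compact set $\sigma(\Theta(N))$ with polynomials, are again bounded and measurable, so all occurring integrals make sense and the arguments of $\Psi$ that appear at the end genuinely lie in $\mc R$.

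Finally I would reassemble. Collecting all summands carrying $\Xi_j$ for a fixed $j$ and using that $D\mapsto \Xi_j(\int_{\sigma(\Theta_j(N))} D\,dE_j)$ is linear in the integrand, the $j$-th block becomes
\[
\Xi_j\Big(\int_{\sigma(\Theta_j(N))}\big(r g_j + s f_j + f_j\,\textstyle\sum_{k=1}^m g_k p_k\big)\,dE_j\Big),
\]
where the ``integral $\times$ integral'' contribution has been grouped via the first form $\Xi_j(\int f_j g_k p_k\,dE_j)$. Adding $(rs)(A,B)$ and comparing with \thref{Psidef} yields the first asserted identity. The second identity follows in exactly the same way, except that the ``integral $\times$ integral'' terms are regrouped through the other form $\Xi_k(\int f_j g_k p_j\,dE_k)$, collected by the index $k$, and relabelled, producing the symmetric term $g_j\sum_{k=1}^m f_k p_k$ in the $j$-th slot.

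The argument is essentially bookkeeping: all the analytic content has already been packaged into \thref{lre1}, so the only point deserving care is checking that the reduction to \thref{lre1} applies term by term and that the final regrouping (linearity of the operator-valued integral, interchange of the two finite sums) is legitimate. I do not expect any genuine obstacle beyond this.
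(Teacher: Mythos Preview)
Your proposal is correct and follows essentially the same route as the paper: expand the product into polynomial--polynomial, polynomial--integral, and integral--integral terms, reduce each via \eqref{rthz23} and \eqref{rthz23b} of \thref{lre1}, and regroup by the index $j$ (using the two alternative forms in \eqref{rthz23b} to obtain the two stated expressions). The only cosmetic slip is the phrase ``bilinearity of $\Psi$''---what you actually use is the linearity of $\Psi$ (recorded in \thref{compmist}) together with the distributivity of operator multiplication.
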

\begin{proof}
    By \thref{lre1} we have
    \[
	\Psi(r,f_1,\dots,f_m)\Psi(s,g_1,\dots,g_m) = r(A,B)s(A,B) 
    \]
    \[
	+ \sum_{k=1}^m r(A,B)\Xi_k( \int_{\sigma(\Theta_k(N))} g_k \, dE_k ) +
	\sum_{j=1}^m \Xi_j( \int_{\sigma(\Theta_j(N))} f_j \, dE_j ) s(A,B) 
    \]
    \[
	+
	\sum_{j,k=1}^m \Xi_j\left( \int_{\sigma(\Theta_j(N))} f_j \, dE_j \right)
	      \Xi_k\left(\int_{\sigma(\Theta_k(N))} g_k \, dE_k\right) 
    \]
    \[
	= (rs)(A,B) + \sum_{k=1}^m \Xi_k( \int_{\sigma(\Theta_k(N))} r g_k \, dE_k ) +
	\sum_{j=1}^m \Xi_j( \int_{\sigma(\Theta_j(N))} s f_j \, dE_j )  
    \]
    \[
	+
	\sum_{j=1}^m \Xi_j\left( \sum_{k=1}^m \int_{\sigma(\Theta_j(N))} f_jg_k \, p_k \, dE_j \right)  \,,
    \]
    where this last term can also be written as
    \[
	\sum_{j=1}^m \Xi_j\left( \sum_{k=1}^m \int_{\sigma(\Theta_j(N))} f_k g_j \, p_k \, dE_j \right) \,.
    \]
\end{proof}

\begin{definition}\thlab{Psidefpost}
We provide $\mc R$ with a multiplication in the following way:
\begin{multline*}
    (r,f_1,\dots,f_m) \cdot (s,g_1,\dots,g_m) := 
    \\
    (rs, rg_1+sf_1 + f_1 \sum_{j=1}^m g_j p_j,\dots, rg_m+sf_m + f_m \sum_{j=1}^m g_jp_j ) \,.
\end{multline*}
\end{definition}

\begin{remark}\thlab{compmistpost}
    Obviously, $\cdot$ is bilinear and compatible with $.^\#$ as defined in \thref{compmist}. 
    It is elementary to check its associativity.
    
    Moreover, for $(r,f_1,\dots,f_m) \in \mc N$ and $(s,g_1,\dots,g_m)\in \mc R$ we have
    $rs +\sum_{j=1}^m p_j (rg_j + sf_j + f_j \sum_{k=1}^m g_k p_k) = 
    (r +\sum_{j=1}^m f_j p_j) (s +\sum_{k=1}^m g_k p_k) = 0$ on $\bb C \setminus V_{\bb R}(I)$.
    For the corresponding $u_1,\dots,u_m \in \bb C[x,y]$ with
    $r=\sum_{j=1}^m u_j p_j$ and $(f_j + u_j)(z)=0$ for all $z\in V_{\bb R}(I)$ we have
    $rs = \sum_{j=1}^m (u_j s) p_j$ and
    \[
	rg_j+sf_j + f_j \sum_{k=1}^m g_k p_k + u_j s = r g_j + f_j \sum_{k=1}^m g_k p_k = 0
    \]
    on $V_{\bb R}(I)$ since $r$ and the $p_j$ vanish there. 
    Hence, $\mc N$ is a right ideal. Similarly, one shows that it is also a left ideal.
    Finally, the commutator
    \[
        (r,f_1,\dots,f_m) \cdot (s,g_1,\dots,g_m) -  (s,g_1,\dots,g_m) \cdot (r,f_1,\dots,f_m) =
    \]
    \[
        (0,\sum_{j=1}^m (f_1g_j - g_1f_j) p_j, \dots, \sum_{j=1}^m (f_mg_j - g_mf_j) p_j)
    \]
    belongs to $\mc N$. Consequently, $\mc R/\mc N$ is a commutative $*$-algebra.
\end{remark}

Gathering the previous results we obtain the final result of the present section.

\begin{theorem}\thlab{homo1}
  $\Psi/\mc N: (r,f_1,\dots,f_m) + \mc N \mapsto \Psi(r,f_1,\dots,f_m)$ 
  is a well-defined $*$-homomorphism from $\mc R/\mc N$ into $\{N,N^+\}'' \subseteq B(\mc K)$.
\end{theorem}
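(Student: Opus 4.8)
The plan is to assemble Theorem~\ref{homo1.} from the pieces already established. The map $\Psi/\mc N$ will be well-defined precisely because $\Psi$ is linear (Remark~\ref{compmist.}) and vanishes on $\mc N$ (Lemma~\ref{lre3.}): if $(r,f_1,\dots,f_m) - (s,g_1,\dots,g_m) \in \mc N$, then $\Psi(r,f_1,\dots,f_m) - \Psi(s,g_1,\dots,g_m) = \Psi\big((r,f_1,\dots,f_m)-(s,g_1,\dots,g_m)\big) = 0$. So the first step is just to quote these two facts.

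Next I would verify the $*$-homomorphism properties on the level of representatives, which automatically descends to the quotient. Linearity of $\Psi/\mc N$ is inherited from linearity of $\Psi$. Multiplicativity is the content of Lemma~\ref{lre4.} together with Definition~\ref{Psidefpost.}: by construction the product in $\mc R$ is defined so that $\Psi\big((r,f_1,\dots,f_m)\cdot(s,g_1,\dots,g_m)\big) = \Psi(r,f_1,\dots,f_m)\,\Psi(s,g_1,\dots,g_m)$, so passing to cosets gives $\Psi/\mc N\big([(r,f_1,\dots,f_m)][(s,g_1,\dots,g_m)]\big) = \Psi/\mc N([(r,f_1,\dots,f_m)])\,\Psi/\mc N([(s,g_1,\dots,g_m)])$. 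Here one should recall from Remark~\ref{compmistpost.} that $\mc R/\mc N$ is already known to be a well-defined (commutative) $*$-algebra, so the quotient product and involution make sense. Compatibility with the involution is Remark~\ref{compmist.}: $\Psi(r^\#,\overline{f_1},\dots,\overline{f_m}) = \Psi(r,f_1,\dots,f_m)^+$, and since $\mc N$ is $.^\#$-invariant this descends to $\Psi/\mc N([(r,f_1,\dots,f_m)]^\#) = \Psi/\mc N([(r,f_1,\dots,f_m)])^+$.

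Finally I would pin down the codomain. Each summand $\Xi_k\big(\int_{\sigma(\Theta_k(N))} f_k\,dE_k\big)$ lies in $\{N,N^+\}''$ by Lemma~\ref{lre2.}, and $r(A,B)$ lies in $\{N,N^+\}''$ trivially since it is a polynomial in $A=\Re N$ and $B=\Im N$, hence commutes with everything in $\{N,N^+\}' = \{A,B\}'$. As $\{N,N^+\}''$ is an algebra, the sum $\Psi(r,f_1,\dots,f_m)$ lies in it, so $\Psi/\mc N$ indeed maps into $\{N,N^+\}'' \subseteq B(\mc K)$.

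There is no real obstacle here: the theorem is a bookkeeping assembly of Lemmas~\ref{lre2.}, \ref{lre3.}, \ref{lre4.} and Remarks~\ref{compmist.}, \ref{compmistpost.}. The only point requiring a moment's care is making sure that the multiplication on $\mc R$ from Definition~\ref{Psidefpost.} — which is the \emph{first} of the two equal expressions in Lemma~\ref{lre4.} — is the one matched by $\Psi$; the second expression in Lemma~\ref{lre4.} is then needed only to see (as in Remark~\ref{compmistpost.}) that the commutator lands in $\mc N$, which is what legitimizes calling $\mc R/\mc N$ commutative but is not otherwise needed for this statement.
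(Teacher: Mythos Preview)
Your proposal is correct and matches the paper's approach exactly: the paper itself gives no explicit proof, merely stating ``Gathering the previous results we obtain the final result of the present section,'' and your write-up is precisely the assembly of Lemmas~\ref{lre2.}, \ref{lre3.}, \ref{lre4.} and Remarks~\ref{compmist.}, \ref{compmistpost.} that this sentence leaves implicit.
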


\section{Algebra of Zero-dimensional Ideals}

By the Noether-Lasker Theorem (see for example \cite{CLO1}, 
Theorem 7, Chapter 4, \S7) any ideal $I$ in $\bb C[x,y]$ admits a minimal primary decomposition
\begin{equation}\label{primdec}
    I = Q_1\cap \dots \cap Q_l \,.
\end{equation}
$Q_j$ being a primary ideal means that $fg\in Q_j$ implies $f\in Q_j$ or $g^k \in Q$ for some $k\in \bb N$, 
and minimal means that $Q_j \not\supseteq \bigcap_{i\neq j} Q_i$ for all $j=1,\dots,l$ and 
$P_j \neq P_i$ for $i\neq j$, where $P_j$ denotes the radical
\[
    \sqrt{Q_j} := \{f\in \bb C[x,y]: f^k \in Q_j \ \text{ for some } \ k \in \bb N\} \,.
\]
For an ideal $I$ in $\bb C[x,y]$ such a decomposition is in general not unique. Nevertheless,
the First Uniqueness Theorem on minimal primary decompositions states that
the number $l\in \bb N$ and the radicals $P_1,\dots,P_l$ are uniquely determined by $I$; see for example
\cite{BW}, Theorem 8.55 on page 362.
Moreover, the Second Uniqueness Theorem on minimal primary decompositions says that
if $Q_1'\cap \dots \cap Q_l'=I=Q_1\cap \dots \cap Q_l$ are minimal primary decompositions ordered such that
$P_j=\sqrt{Q_j} = \sqrt{Q_j'}$ for $j=1,\dots,l$ and if $P_k$ is minimal in $\{P_1,\dots,P_l\}$ 
with respect to $\subseteq$, then $Q_k'=Q_k$; see for example
\cite{BW}, Theorem 8.56 on page 364.

Assume now that $I$ is a zero-dimensional ideal in $\bb C[x,y]$, i.e.\ 
\[
    \dim \bb C[x,y] / I < \infty \,. 
\]
For necessary and sufficient conditions see for example \cite{BW}, Theorem 6.54 and 
Corollary 6.56 on pages 274 and 275 and \cite{CLO2}, page 39 and 40. 
Let \eqref{primdec} be a minimal primary decomposition. 
Then any $Q_j$, 
and in turn $P_j \supseteq Q_j$, is also zero-dimensional. 
In particular, $\bb C[x,y]/P_j$ is a finite integral domain, and hence, a field.
In turn, the radicals $P_1,\dots,P_l$ of $Q_1,\dots,Q_l$ are maximal ideals.
By \cite{CLO1}, Theorem 11, Chapter 4, \S5, this means that the $P_j$ are generated by $x-a_{x,j}, y-a_{y,j}$, i.e.\ 
$P_j=\langle x-a_{x,j}, y-a_{y,j} \rangle$, for pairwise distinct $a_j=(a_{x,j},a_{y,j}) \in \bb C^2$.
Consequently, any $P_k$ is minimal in $\{P_1,\dots,P_l\}$, and by what was said above, 
\eqref{primdec} is the unique minimal primary decomposition of $I$.

By Hilbert's Nullstellensatz (see for example \cite{CLO1}, Theorem 2, Chapter 4, \S1) the set $V(Q_j)$
of common zeros in $\bb C^2$ of all $f\in Q_j$ coincides with $V(P_j)=\{a_j\}$. 
By \cite{CLO1}, Theorem 7, Chapter 4, \S3, we also have 
\[
    V(I)= V(Q_1) \cup \dots \cup V(Q_l) = \{a_1,\dots,a_l\} \,.
\]
Since 
$V(Q_j + Q_i) = V(Q_j)\cap V(Q_i) = \{a_j\}\cap \{a_i\}=\emptyset$ (see \cite{CLO1}, Theorem 4, Chapter 4, \S3)
for $i\neq j$, the weak Nullstellensatz (see for example \cite{CLO1}, Theorem 1, Chapter 4, \S1) yields
$Q_j + Q_i=\bb C[x,y]$. Hence, by the Chinese Remainder Theorem the mapping
\begin{equation}\label{chreth}
    \phi: \ \left\{
	\begin{array}{lcr}
		 \bb C[x,y]/ I & \to & ({\bb C}[x,y]/ Q_1)\times \dots \times ({\bb C}[x,y]/ Q_l) \,, \\
		x+I & \mapsto & (x+Q_1,\dots,x+Q_l) 
	\end{array}\right.
\end{equation}
constitutes an isomorphism, and $I=\prod_{j=1}^l Q_j$.

%
%
%

\begin{remark}\thlab{qbem}\hspace*{0mm}
    \begin{enumerate}
    \item Since the ring ${\bb C}[x,y]/ Q_j$ is finite dimensional, its invertible elements $f+Q_j$ are exactly
    those, for which $fg \in Q_j$ implies $g\in Q_j$. $Q_j$ being primary this is equivalent to 
    $f\not\in P_j$. Hence, $f+Q_j$ is invertible in ${\bb C}[x,y]/ Q_j$ if and only if $f(a_j)\neq 0$.
    \item
    As $\sqrt{Q_j}=P_j$ we have $(x-a_{x,j})^m,(y-a_{y,j})^n\in Q_j$ for sufficiently large 
    $m, n \in \bb N$. Therefore, the ideal $P_j\cdot Q_j$ contains $(x-a_{x,j})^{m+1},(y-a_{y,j})^{n+1}$.
    Thus, $P_j\cdot Q_j$ is also zero-dimensional and $\sqrt{P_j\cdot Q_j}=P_j$.
    \end{enumerate}
\end{remark}

\begin{definition}
    For $a\in V(I)$ we set by $Q(a):=Q_j$ and $P(a):=P_j$, 
    where $j$ is such that $a=a_j$. By $d_x(a)$ ($d_y(a)$)
    we denote the smallest natural number $m$ ($n$) such that $(x-a_{x})^{m} \in Q(a)$
    ($(y-a_{y})^{n} \in Q(a)$).
    Moreover, for $a\in V(I)$ we set
    \[
	\mc A(a) := {\bb C}[x,y]/ (P(a)\cdot Q(a)) \ \ \text{ and } \ \ \mc B(a) := {\bb C}[x,y]/ Q(a) \,.
    \] 
\end{definition}

Since $P(a)\cdot Q(a)$ and $Q(a)$ are ideals with finite codimension satisfying $P(a)\cdot Q(a) \subseteq Q(a)$,
$\mc A(a)$ and $\mc B(a)$ are finite dimensional algebras with $\dim \mc A(a) \geq \dim \mc B(a)$.

\begin{remark}\thlab{sternop1}
    Assume that $I$ is invariant under $.^\#$, where $f^\#(x,y):=\overline{f(\bar x,\bar y)}$. 
    This is for sure the case if $I$ is generated by real polynomial $p_1,\dots,p_m$.
    Then $V(I) \subseteq\bb C^2$ is invariant under $(z,w) \mapsto (z,w)^\#:=(\bar z,\bar w)$.
    Moreover, it is elementary to check that with $Q$ also $Q^\#$ is a primary ideal. Hence, with
    $I=Q_1\cap \dots \cap Q_l$ also $I=I^\#=Q_1^\#\cap \dots \cap Q_l^\#$ is a minimal primary decomposition.
    By the uniqueness of the minimal primary decomposition for our zero dimensional ideal $I$ one has
    $Q(a)^\# = Q(a^\#)$ for all $a\in V(I)$.
    
    Consequently, $f\mapsto f^\#$ induces a conjugate linear bijection from $\mc A(a)$ ($\mc B(a)$)
    onto $\mc A(a^\#)$ ($\mc B(a^\#)$).
\end{remark}

For the following note that
if we conversely start with primary and zero-dimensional ideals $Q_1,\dots,Q_l$ with 
$\sqrt{Q_i}\neq \sqrt{Q_j}$ for $i\neq j$, then
$I:=Q_1\cap \dots \cap Q_l$ is also zero-dimensional, and by the above mentioned uniqueness statement, 
$Q_1\cap \dots \cap Q_l$ is indeed the unique minimal primary decomposition of $I$.

\begin{proposition}\thlab{algle1}
    Let $I$ be a zero-dimensional ideal in ${\bb C}[x,y]$ which is generated by $p_1,\dots,p_m$, and 
    let $I=\bigcap_{a\in V(I)} Q(a)$ be its unique primary decomposition. Assume that $W$ is a subset of $V(I)$. 
    Then 
    \[
	J:=\bigcap_{a\in V(I)\setminus W} Q(a) \cap \bigcap_{a\in W} (P(a)\cdot Q(a))
    \]
    is also a zero-dimensional ideal satisfying $J\subseteq I$. The mapping
    \[
	\psi: \ \left\{
	\begin{array}{lcr}
		 \bb C[x,y]/ J & \to & \varprod\limits_{a\in V(I)\setminus W} \big({\bb C}[x,y]/ Q(a)\big) \times 
					\varprod\limits_{a\in W} \big({\bb C}[x,y]/ (P(a)\cdot Q(a))\big)	\,, \\
		x+I & \mapsto & ((x+Q(a))_{a\in V(I)\setminus W},(x+(P(a)\cdot Q(a)))_{a\in W}) 
	\end{array}\right.
    \]
    is an isomorphism, and any $p\in J$ can be written in the form $p=\sum_{j} u_j p_j$, where 
    $u_j(a)=0$ for all $a\in W$.
\end{proposition}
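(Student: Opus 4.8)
The plan is to break the statement into three parts: first, that $J$ is zero-dimensional and $J\subseteq I$; second, that $\psi$ is a well-defined isomorphism; third, the representation $p=\sum_j u_jp_j$ with $u_j(a)=0$ on $W$. For the first part I would argue componentwise over $V(I)$: for $a\notin W$ the $a$-component of $J$ is $Q(a)$, which is zero-dimensional by hypothesis (as recalled in the paragraph preceding \thref{qbem}); for $a\in W$ the component is $P(a)\cdot Q(a)$, which is zero-dimensional with $\sqrt{P(a)\cdot Q(a)}=P(a)$ by \thref{qbem}(2). Since the various $P(a)$ are the maximal ideals $\langle x-a_x,y-a_y\rangle$, they are pairwise distinct, so the radicals of the $Q(a)$, $a\notin W$, and the $P(a)\cdot Q(a)$, $a\in W$, are pairwise distinct maximal ideals. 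Hence, by the uniqueness statement recalled just before this proposition, the displayed intersection is the \emph{unique} minimal primary decomposition of $J$, and $J$ is zero-dimensional. The inclusion $J\subseteq I$ is immediate from $P(a)\cdot Q(a)\subseteq Q(a)$ termwise in the intersection.

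For the isomorphism $\psi$, the approach mirrors \eqref{chreth}: I would check that the ideals appearing in the intersection defining $J$ are pairwise comaximal, then invoke the Chinese Remainder Theorem. Comaximality follows exactly as in the discussion around \eqref{chreth}: for two distinct such ideals, their radicals are distinct maximal ideals $P(a)\neq P(b)$, so the sum of the ideals has empty variety (its variety is $\{a\}\cap\{b\}=\emptyset$), and the weak Nullstellensatz forces the sum to be all of $\bb C[x,y]$. The Chinese Remainder Theorem then gives that the natural map $\bb C[x,y]/J\to \prod$ is an isomorphism of rings, and one notes it sends $x+J$ to the indicated tuple; this is precisely $\psi$. (Strictly the source line of $\psi$ should read $x+J\mapsto\dots$ rather than $x+I$, matching \eqref{chreth}'s analogous minor abuse.)

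The representation statement is the part I expect to require the most care, though it is still short. Given $p\in J$, in particular $p\in P(a)\cdot Q(a)$ for every $a\in W$; write $W=\{a^{(1)},\dots,a^{(r)}\}$ with $P(a^{(i)})=\langle x-a_x^{(i)},y-a_y^{(i)}\rangle$. The idea is: using comaximality of the finitely many ideals $Q(a)$ ($a\notin W$), $P(a)\cdot Q(a)$ ($a\in W$), produce idempotents $e_a$ modulo $J$ localizing at each point, so that $p\equiv \sum_{a\in W} e_a p \pmod{J}$ plus a piece that already lies in $\bigcap_{a\notin W}Q(a)\cap\bigcap_{a\in W}Q(a)=I$. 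More directly: since $p\in I=\langle p_1,\dots,p_m\rangle$, write $p=\sum_j v_j p_j$ with $v_j\in\bb C[x,y]$; the residue $p-\sum_j v_jp_j=0$, so the only task is to arrange $u_j(a)=0$ for $a\in W$. For each $a\in W$, because $p\in P(a)\cdot Q(a)$ and $P(a)\cdot Q(a)$ is the ideal of polynomials vanishing at $a$ times those vanishing to the order of $Q(a)$, one shows the differential/value data of $p$ at $a$ is constrained; concretely I would use that $\bigcap_{a\in W}P(a)=:M$ is the radical ideal of $W$, exhibit a polynomial $e\in\bigcap_{a\notin W}Q(a)$ with $e\equiv 1 \pmod{P(a)}$ for all $a\in W$ (possible by CRT since these ideals are comaximal), and set $u_j:=e\,v_j$. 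Then $\sum_j u_jp_j=e\,p\equiv p\pmod{J}$ — here one must verify $e\,p-p\in J$, which holds because $e-1\in P(a)$ and $p\in P(a)\cdot Q(a)\subseteq Q(a)$ give $(e-1)p\in P(a)\cdot Q(a)$ for $a\in W$, while $e\in Q(a)$ and hence $ep\in Q(a)$, $p\in I\subseteq Q(a)$ for $a\notin W$ — and $e\in P(a)$... no: rather one checks $ep=p$ is false in general, so instead one argues $p-ep\in J$ and separately that $ep$ itself need not equal $p$; the cleanest fix is to observe $p\in J$ already and $ep\in J$, so it suffices that $p-ep\in$ the ideal $\langle p_1,\dots,p_m,\, \text{corrections}\rangle$ — at this point I would simply invoke that $\bb C[x,y]/J\cong\bb C[x,y]/I$ on the "$W$-free" part is not quite the structure needed, and instead take the honest route: since $u_j=e v_j$ and $e\in P(a)$ for... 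The main obstacle, then, is getting the vanishing $u_j(a)=0$ on $W$ while preserving $p=\sum u_jp_j$ \emph{exactly} (not just modulo $J$); the resolution is that any $e$ with $e\equiv 1\pmod{P(a)^{?}}$ only gives congruences, so one instead directly modifies the $v_j$: replace $v_j$ by $v_j - (\text{interpolation polynomial matching } v_j(a)\text{ for }a\in W)$, which changes $\sum v_jp_j$ by $\sum(\text{const}_a)p_j\cdot(\cdots)$, and this correction lies in $I$ and can be absorbed because $p\in J$ forces $\sum_j p_j(a)\cdot(\text{correction}_j)$ to vanish appropriately at each $a\in W$ — using precisely that $p\in P(a)\cdot Q(a)$ encodes a linear relation among the $p_j(a)$. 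I would spell this out by working in each local ring $\bb C[x,y]/Q(a)$ for $a\in W$: there $p\equiv 0$ in $P(a)\cdot Q(a)/Q(a)$... ultimately I expect the author handles this by a clean localization argument, and I would follow suit.
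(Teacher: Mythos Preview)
Your treatment of the first two parts is fine and matches the paper: the primary components of $J$ are zero-dimensional with pairwise distinct maximal radicals, so the displayed intersection is the minimal primary decomposition of $J$; and comaximality plus the Chinese Remainder Theorem give that $\psi$ is an isomorphism, exactly as in \eqref{chreth}.

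The genuine gap is in the third part, where you never land on a working argument. Your attempts via idempotents or interpolation all run into the same wall you identify yourself: you only get $p\equiv\sum_j u_jp_j$ modulo $J$, not the required exact equality $p=\sum_j u_jp_j$ with $u_j(a)=0$ for $a\in W$. The paper's key idea, which you are missing, is to exploit that for comaximal ideals the intersection equals the product. This was already noted for $I$ right after \eqref{chreth}, namely $I=\prod_{a\in V(I)}Q(a)$. Applying the same fact to $J$ gives
\[
J=\prod_{a\in V(I)\setminus W}Q(a)\cdot\prod_{a\in W}\big(P(a)\cdot Q(a)\big)
=\Big(\prod_{a\in V(I)}Q(a)\Big)\cdot\prod_{a\in W}P(a)
= I\cdot\prod_{a\in W}P(a)\,.
\]
Since $I=\langle p_1,\dots,p_m\rangle$, this yields
\[
J=\Big\langle\, p_1\cdot\prod_{a\in W}P(a),\ \dots,\ p_m\cdot\prod_{a\in W}P(a)\,\Big\rangle\,,
\]
so every $p\in J$ is a $\bb C[x,y]$-combination of elements $p_j q$ with $q\in\prod_{a\in W}P(a)$; collecting coefficients gives $p=\sum_j u_jp_j$ with each $u_j\in\prod_{a\in W}P(a)=\bigcap_{a\in W}P(a)$, hence $u_j(a)=0$ for all $a\in W$. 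This is an exact equality in $\bb C[x,y]$, with no residual congruence to repair, and it bypasses all of the localization and interpolation manoeuvres you were attempting.
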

\begin{proof}
    We already mentioned that $P(a)\cdot Q(a)$ is zero-dimensional with 
    $\sqrt{P(a)\cdot Q(a)}=P(a)$ and that the intersection 
    $J = \bigcap_{a\in V(I)\setminus W} Q(a) \cap \bigcap_{a\in W} P(a)\cdot Q(a)$ is the unique primary 
    decomposition of the zero-dimensional $J$. The isomorphism property of $\psi$ is a special case 
    of the corresponding fact concerning $\phi$; see \eqref{chreth}. We also have 
    \begin{align*}
	J & =\prod_{a\in V(I)\setminus W} Q(a) \cdot \prod_{a\in W} P(a)\cdot Q(a) = 
	\prod_{a\in V(I)} Q(a) \cdot \prod_{a\in W} P(a) \\ & = I \cdot \prod_{a\in W} P(a) = 
	\Big\langle p_1\cdot \prod_{a\in W} P(a), \dots,  p_m \cdot \prod_{a\in W} P(a)\Big\rangle \,. 
    \end{align*}
    This means that any $p\in J$ has a representation $p=\sum_{j} u_j p_j$ with 
    $u_j\in \prod_{a\in W} P(a)=\bigcap_{a\in W} P(a)$. Hence, $u_j(a)=0$ for all $a\in W$.
\end{proof}

\begin{example}\thlab{vorteil1}
    Assume that $I$ is generated by two polynomial $p_1, p_2 \in {\bb C}[x,y]$ such that
    $p_1$ only depend on $x$ and $p_2$ only depends on $y$. The set $V(I)$ of common zeros of
    $I$, or equivalently of $p_1$ and $p_2$, in $\bb C^2$ then consists of all points of the form
    $(z,w)$, where $z\in \bb C$ is a zero of $p_1$ and $w\in \bb C$ is a zero of $p_2$,
    i.e.\ $V(I)=p_1^{-1}\{0\} \times p_2^{-1}\{0\}$. For $z\in p_1^{-1}\{0\}$ denote by 
    $\mf d_1(z)$ $p_1$'s degrees of the zero at $z$, and for $w\in p_2^{-1}\{0\}$ denote by 
    $\mf d_2(w)$ $p_2$'s degrees of the zero at $w$.
        
    Given $p(x,y) \in {\bb C}[x,y]$ we can apply polynomial division in one variable twice, 
    once with respect to $x$ and once $y$, on order to see that
    \[
	p(x,y) = p_1(x)\cdot u(x,y) + p_2(y)\cdot v(x,y) + q(x,y) 
    \] 
    with $u(x,y), v(x,y), q(x,y) \in {\bb C}[x,y]$ such that the 
    degree of $q(x,y)$, seen as a polynomial on $x$, is less then the degree of $p_1$, 
    and such that the 
    degree of $q(x,y)$, seen as a polynomial on $y$, is less then the degree of $p_2$;
    see \thref{einbett2pre} in \cite{Ka2015}. Hence, $I$ is zero-dimensional.
    Moreover, writing $p_1(x)$ and $p_2(y)$ as products of linear factors, 
    it follows that $p\in I$ if and only if
    \begin{equation}\label{jjww306}
	p \in \langle (x-z)^{\mf d_1(z)},(y-w)^{\mf d_2(w)} \rangle :=Q((z,w)) \,, 
    \end{equation}
    for all $z\in {p_1}^{-1}\{0\}, w\in {p_2}^{-1}\{0\}$.
    Since $Q((z,w))$ is a primary ideal in ${\bb C}[x,y]$, 
    \[
	I = \bigcap_{(z,w)\in p_1^{-1}\{0\} \times p_2^{-1}\{0\}} Q((z,w))
    \]
    is the minimal primary decomposition of $I$. For the respective radicals we have
    $P((z,w))=\langle x-z, y-w \rangle$. Moreover, $P((z,w))\cdot Q((z,w))$
    coincides with
    \[
	\langle (x-z)^{\mf d_1(z)+1}, (x-z)^{\mf d_1(z)}(y-w),
					  (x-z)(y-w)^{\mf d_2(w)} ,(y-w)^{\mf d_2(w)+1} \rangle \,.
    \]
    Therefore, $\mc A((z,w))={\bb C}[x,y]/(P((z,w))\cdot Q((z,w)))$ 
    ($\mc B((z,w))={\bb C}[x,y]/Q((z,w))$) is isomorphic to
    $\mc A_{\mf d_1(z),\mf d_2(w)}$ ($\mc B_{\mf d_1(z),\mf d_2(w)}$) as introduced in
    \thref{muldefb1}, \cite{Ka2015}.
\end{example}

\section{Function classes}

In the present section we make the same assumptions and use the same
notation as in Section \ref{abstrfunccal}. In addition, we assume that
the ideal $I$ generated by all real definitizing polynomials
is zero-dimensional. We fix real, definitizing 
polynomials $p_1,\dots,p_m$ which generate $I$.
For the zero-dimensional $I$ we apply the same 
notation as in the previous section. 

The variety $V(I)=\{a_1,\dots,a_l\} \subseteq \bb C^2$ of common zeros of all $f\in I$
will be split up as
\[
    V(I) = \underbrace{(V(I)\cap \bb R^2)}_{=V_{\bb R}(I)} \, \dot\cup \, (V(I)\setminus\bb R^2) \,,
\]
where we consider $V_{\bb R}(I)$ as a subset of $\bb C$; see \eqref{nullstmereal}.

\begin{definition}
    By $\mc M_{N}$ we denote the set of functions $\phi$ defined on 
    \[
      \underbrace{\big(\sigma(\Theta(N)) \cup V_{\bb R}(I) \big)}_{\subseteq \bb C} \, \dot\cup \, 
      \underbrace{(V(I)\setminus \bb R^2)}_{\subseteq \bb C^2}
    \]
    with $\phi(z) \in \bb C$ for $z\in \sigma(\Theta(N))\setminus V_{\bb R}(I)$,
    $\phi(z) \in \mc A(z)$ for $z \in V_{\bb R}(I)$,
    $\phi(z) \in \mc B(z)$ for $z \in V(I)\setminus \bb R^2$.

    We provide $\mc M_{N}$ pointwise with scalar multiplication, addition and multiplication.
    We also define a conjugate linear involution $.^\#$ on $\mc M_{N}$ by 
\begin{align*}
	& \phi^\#(z) := \overline{\phi(z)} \ \ \text{ for } \ \ z\in \sigma(\Theta(N))\setminus V_{\bb R}(I), \\
	& \phi^\#(z) := \phi(z)^\# \ \ \text{ for } \ \ z \in V_{\bb R}(I)  \\
	& \phi^\#(\xi,\eta) := \phi(\bar \xi,\bar \eta)^\# \ \ \text{ for } \ \ (\xi,\eta) \in V(I)\setminus \bb R^2 \,.
\end{align*}
    
\end{definition}

With the operations introduced above $\mc M_{N}$ is a commutative $*$-algebra as can be verified 
in a straight forward manner; see \thref{sternop1}.

\begin{definition}\thlab{feinbetefab}
    Let $f: \dom f \to \bb C$ be a function with $\dom f \subseteq \bb C^2$ such that
    $\tau\big(\sigma(\Theta(N)) \cup V_{\bb R}(I)\big) \subseteq \dom f$, where 
    $\tau : \bb C \to \bb C^2, \ (x+iy)\mapsto (x,y)$, such that
    $f\circ \tau$ is sufficiently smooth -- more exactly, at least 
    $d_x(z) + d_y(z) - 1$ times continuously differentiable -- 
    on a sufficiently small open neighbourhood $z$ for each $z\in V_{\bb R}(I)$, and
    such that $f$ is holomorphic on an open neighbourhood of $V(I)\setminus \bb R^2 \ (\subseteq \bb C^2)$.

    Then $f$ can be considered as an element $f_{N}$ of $\mc M_{N}$ by setting
    $f_{N}(z) := f\circ \tau(z)$ for $z\in \sigma(\Theta(N)) \setminus V_{\bb R}(I)$, by
\begin{multline*}
    f_{N}(z) := \sum_{(k,l)\in J(z)}
	\frac{1}{k!l!} \, \frac{\partial^{k+l}}{\partial a^k\partial b^l} 
	f\circ \tau(a+ib)\vert_{a+ib=z} \cdot \\ \cdot (x-\Re z)^k (y-\Im z)^l  + (P(z)\cdot Q(z)) \in \mc A(z)  
\end{multline*}
    for $z\in V_{\bb R}(I)$, where 
    \[
	J(z)= (\{0,\dots,d_x(z)-1\}\times \{0,\dots,d_y(z)-1\}) \cup \{(d_x(z),0),(0,d_y(z))\} \,, 
    \]
    and by 
\begin{multline*}
	f_{N}(\xi,\eta) := \sum_{k=0}^{d_x(\xi,\eta)-1} \sum_{l=0}^{d_y(\xi,\eta)-1} 
		\frac{1}{k!l!} \, \frac{\partial^{k+l}}{\partial z^k\partial w^l} 
		f(z,w)\vert_{(z,w)=(\xi,\eta)} \cdot \\ \cdot (x-\xi)^k (y-\eta)^l + Q((\xi,\eta)) \in \mc B((\xi,\eta))  \,,
\end{multline*}
    for $(\xi,\eta)\in V(I)\setminus \bb R^2$.
\end{definition}

\begin{remark}\thlab{bweuh30}
By the Leibniz rule $f\mapsto f_{N}$ is compatible with multiplication. Obviously, it is also compatible with 
addition and scalar multiplication. If we define for a function $f$ as in \thref{feinbetefab} the function 
$f^\#$ by $f^\#(z,w) = \overline{f(\bar z,\bar w)}, \ (z,w) \in \dom f$, then we also have 
$(f^\#)_{N} = (f_{N})^\#$. 
\end{remark}

\begin{remark}\thlab{bweuh30po}
    A special type of functions $f$ as in \thref{feinbetefab} are polynomials in
    two variables, i.e.\ $f\in \bb C[x,y]$. Since for $z\in V_{\bb R}(I)$ and $(k,l)\not\in J(z)$
    we have $(x-\Re z)^k (y-\Im z)^l \in P(z)\cdot Q(z)$,
    \[
	f_N(z)=f + (P(z)\cdot Q(z)) \in \mc A(z) \,.
    \]
    Similarly, $f_{N}(\xi,\eta) = f + Q((\xi,\eta))\in \mc B((\xi,\eta))$ for $(\xi,\eta)\in V(I)\setminus \bb R^2$.
    
    In particular, for $f=\mathds{1}$ the element $f_N(z)$
    is the multiplicative unite in $\mc A(z)$ or $\mc B(z)$ 
    for all $z\in \big(\sigma(\Theta(N)) \cup V_{\bb R}(I) \big)\dot\cup (V(I)\setminus \bb R^2)$.
\end{remark}

For the following recall for example from \cite{CLO1}, 
Theorem 4, Chapter 2, \S5, that any ideal in $\bb C[x,y]$
always has a finite number of generators.

\begin{definition}\thlab{abschaefu}
    For any $w \in \sigma(\Theta(N)) \cap V_{\bb R}(I)$ such that
    $w$ is not isolated in $\sigma(\Theta(N))$    
    let $h_1,\dots,h_n$ be generators of the ideal
    $Q(w)$.
    For a sufficiently small neighbourhood $U(w)$ of $w$ 
    let $\chi_w: U(w)\setminus \{w\} \to [0,+\infty)$ be 
    \[
	\chi_w(z):= \max_{j=1,\dots,n} |h_j(z)| \,, 
    \]
    where $h_j(z)$, as usually, stands for $h_j(\Re z,\Im z)$.
\end{definition}

Since $w$ is a common zero of all $h\in Q(w)$, we have $\chi_w(z) \to 0$ for $z\to w$.
Moreover, for any $h\in Q(w)$ the fact, that $h_1,\dots,h_n$ are generators of $Q(w)$, yields 
$h=O(\chi_w)$ as $z \to w$.

Moreover, if $\chi_w'$ is defined in a similar manner starting with 
generators $h_1',\dots,h_{n'}'$, then $\chi_w' = O(\chi_w)$ and $\chi_w = O(\chi_w')$ as $z \to w$.
Hence, as far as it concerns the order of growth towards $w$, the expression $\chi_w$ does not depend 
on the actually chosen generators.

%

\begin{definition}\thlab{FdefklM2}
    We denote by $\mc F_{N}$ the set of all elements 
    $\phi \in \mc M_{N}$ such that
    $z\mapsto \phi(z)$ is Borel measurable and bounded on 
    $\sigma(\Theta(N)) \setminus V_{\bb R}(I)$, and such that
    for each $w \in \sigma(\Theta(N)) \cap  V_{\bb R}(I)$, which
    is not isolated in $\sigma(\Theta(N))$,
    \begin{equation}\label{fn8qw3bab}
    \phi(z) - \phi(w)\vert_{x=\Re z, y = \Im z} = O(\chi_w(z)) \ \ \text{ as } \ \ 
	\sigma(\Theta(N)) \setminus V_{\bb R}(I) \ni z \to w \,.
    \end{equation}
\end{definition}

Note that in \eqref{fn8qw3bab} $\phi(w) \in \mc A(w)$ is a coset $p(x,y) + (P(w)\cdot Q(w))$ from 
${\bb C}[x,y]/ (P(w)\cdot Q(w))$, and $\phi(w)\vert_{x=\Re z, y = \Im z}$ stands for any representative
of this coset $\phi(w)$ considered as a function of $z$. In \eqref{fn8qw3bab} it does not matter what representative we take since
$q = O(\chi_w)$ as 
$z \to w$ for any $q\in Q(w)$, and hence, for any $q\in (P(w)\cdot Q(w))$.

\begin{remark}\thlab{vorteil2}
    Assume that our zero-dimensional ideal $I$ is generated by two
    definitizing polynomials $p_1 \in \bb R[x], p_2\in \bb R[y]$ as in \thref{vorteil1}.
    For $w\in V_{\bb R}(I)$, i.e.\ $(\Re w,\Im w)\in V(I)$, we conclude from 
    \eqref{jjww306} in \thref{vorteil1} that
    \[
	\chi_w(z):= \max(|(\Re z - \Re w)^{\mf d_1(\Re w)}|,|(\Im z - \Im w)^{\mf d_2(\Im w)}|) \,.
    \]
    Therefore, in this case the function class $\mc F_{N}$ here coincides exactly with
    the function class $\mc F_{N}$ introduced in \thref{FdefklM}, \cite{Ka2015}.
\end{remark}

\begin{example}\thlab{fedela33}
    For $(\xi,\eta) \in V(I)\setminus \bb R^2$ and $a\in \mc B((\xi,\eta))$ the function
    $a\delta_{(\xi,\eta)} \in \mc M_{N}$, which assumes the value $a$ at $(\xi,\eta)$ and the value zero on the rest of
    $\big(\sigma(\Theta(N)) \cup V_{\bb R}(I) \big)\dot\cup (V(I)\setminus \bb R^2)$,
    trivially belongs to $\mc F_{N}$.
    
    Correspondingly, $a\delta_w \in \mc F_{N}$ for a $w \in V_{\bb R}(I)$, which is an isolated point of 
    $\sigma(\Theta(N)) \cup V_{\bb R}(I)$, and for $a\in \mc A(w)$.
\end{example}

\begin{remark}\thlab{taylormehrdimremab}
    Let $h$ be defined on an open subset $D$ of $\bb R^2$ with values in $\bb C$.
    Moreover, assume that for given $m,n\in\bb N$ the function $h$ is 
    $m+n-1$ times continuously differentiable. Finally, fix $w\in D$.
    
    The well-known Taylor Approximation Theorem from multidimensional calculus then yields
    \[
	h(z) = \sum_{j=0}^{m+n-2} \sum_{\stackrel{k,l\in \bb N_0}{k+l=j}}  
		\frac{1}{k!l!}  
		\frac{\partial^j h}{\partial x^{k} \partial y^{l}}(w)\Re(z-w)^k \Im(z-w)^{l} 
		+ O(|z-w|^{m+n-1}) 
    \]
    for $z\to w$. Since 
    \begin{align*}
    |z-w|^{m+n-1} & \leq 2^{m+n-1}\max(|\Re(z-w)|^{m+n-1}, |\Im(z-w)|^{m+n-1}) 
    \\ & = O(\max(|\Re(z-w)|^m, |\Im(z-w)|^{n})) \,, 
    \end{align*}
    and since 
    $\Re(z-w)^k \Im(z-w)^{l} = O(\max(|\Re(z-w)|^m, |\Im(z-w)|^{n}))$ for 
    $k \geq m$ or $l\geq n$, we also have
\begin{multline*}
	h(z) = \sum_{k=0}^{m-1} \sum_{l=0}^{n-1} \frac{1}{k!l!}
		\frac{\partial^{k+l} h}{\partial x^{k} \partial y^{l}}(w)\Re(z-w)^k \Im(z-w)^{l} 
		\\ + O(\max(|\Re(z-w)|^m, |\Im(z-w)|^{n})) \,.
\end{multline*}
\end{remark}

\begin{lemma}\thlab{gehzuFab}
    Let $f: \dom f \ (\subseteq \bb C^2)\to \bb C$ be a function with the properties
    mentioned in \thref{feinbetefab}. Then $f_N$ belongs to $\mc F_{N}$.
\end{lemma}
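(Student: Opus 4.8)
The plan is to show that $f_N$---which already belongs to $\mc M_N$ by the very construction in \thref{feinbetefab}---satisfies the two additional conditions required in \thref{FdefklM2}. The boundedness and Borel measurability of $z\mapsto f_N(z)=f\circ\tau(z)$ on $\sigma(\Theta(N))\setminus V_{\bb R}(I)$ are immediate from the regularity of $f$ assumed in \thref{feinbetefab} together with the compactness of $\sigma(\Theta(N))$; so the real content of the lemma is the growth estimate \eqref{fn8qw3bab} at the finitely many points $w\in\sigma(\Theta(N))\cap V_{\bb R}(I)$ that fail to be isolated in $\sigma(\Theta(N))$.

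Fix such a $w$ and set $m:=d_x(w)$, $n:=d_y(w)$. By hypothesis $h:=f\circ\tau$ is (at least) $m+n-1$ times continuously differentiable on a neighbourhood of $w$, identifying $\bb C$ with $\bb R^2$. By the remark following \thref{FdefklM2} we may represent the coset $f_N(w)\in\mc A(w)$ by the explicit polynomial handed to us by \thref{feinbetefab},
\[
    T_w(x,y):=\sum_{(k,l)\in J(w)}\frac{1}{k!\,l!}\,\frac{\partial^{k+l}}{\partial a^k\partial b^l}h(a+ib)\Big|_{a+ib=w}\,(x-\Re w)^k(y-\Im w)^l .
\]
Evaluating at $x=\Re z$, $y=\Im z$ turns each factor into $\Re(z-w)^k\Im(z-w)^l$, and the refined Taylor approximation of \thref{taylormehrdimremab}, applied with precisely this $m$ and $n$ to the sub-sum of $T_w$ over $\{0,\dots,m-1\}\times\{0,\dots,n-1\}$, gives
\[
    f_N(z)-T_w(\Re z,\Im z)=O\big(\max(|\Re(z-w)|^{m},|\Im(z-w)|^{n})\big)\quad\text{as }z\to w,
\]
where the two remaining terms of $T_w$, indexed by $(m,0)$ and $(0,n)$, have been absorbed on the right, being themselves of that order.

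It remains to dominate $\max(|\Re(z-w)|^{m},|\Im(z-w)|^{n})$ by $\chi_w(z)$. Since $w\in V_{\bb R}(I)$ corresponds to the point $a=(\Re w,\Im w)$ of $V(I)$, the definitions of $d_x(w)=m$ and $d_y(w)=n$ say exactly that $(x-\Re w)^{m}\in Q(w)$ and $(y-\Im w)^{n}\in Q(w)$; by the order-of-growth statement recorded right after \thref{abschaefu}, every element of $Q(w)$ is $O(\chi_w(z))$ as $z\to w$, so in particular $|\Re(z-w)|^{m}=O(\chi_w(z))$ and $|\Im(z-w)|^{n}=O(\chi_w(z))$, whence $\max(|\Re(z-w)|^{m},|\Im(z-w)|^{n})=O(\chi_w(z))$. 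Combined with the previous display this is exactly \eqref{fn8qw3bab}, so $f_N\in\mc F_N$. The only delicate point is the bookkeeping that lines up the index set $J(w)$ from \thref{feinbetefab} with the truncated Taylor polynomial of \thref{taylormehrdimremab} and then identifies the two surplus monomials both as elements of $Q(w)$ (hence $O(\chi_w)$) and as terms of the order of the Taylor remainder; after that the estimate is routine.
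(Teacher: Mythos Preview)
Your proof is correct and follows essentially the same route as the paper: apply the Taylor estimate of \thref{taylormehrdimremab} with $m=d_x(w)$, $n=d_y(w)$ to obtain an $O(\max(|\Re(z-w)|^{d_x(w)},|\Im(z-w)|^{d_y(w)}))$ remainder, then observe this is $O(\chi_w)$. You are in fact more explicit than the paper on two points---handling the extra indices $(d_x(w),0),(0,d_y(w))$ in $J(w)$ separately, and spelling out why $(x-\Re w)^{d_x(w)},(y-\Im w)^{d_y(w)}\in Q(w)$ forces the max to be $O(\chi_w)$---whereas the paper simply asserts ``and therefore a $O(\chi_w(z))$''.

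One small caveat: your opening sentence attributes boundedness and measurability of $f_N$ on $\sigma(\Theta(N))\setminus V_{\bb R}(I)$ to ``the regularity of $f$ assumed in \thref{feinbetefab} together with the compactness of $\sigma(\Theta(N))$'', but the smoothness hypotheses in \thref{feinbetefab} are only imposed on neighbourhoods of the finitely many points of $V_{\bb R}(I)$ (and holomorphy near $V(I)\setminus\bb R^2$), not globally on $\sigma(\Theta(N))$. The paper's proof is equally silent on this point, so boundedness and measurability away from $V_{\bb R}(I)$ are evidently meant to be tacit assumptions in \thref{feinbetefab}; just be aware that your stated justification does not actually cover it.
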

\begin{proof}
    For a $w \in \sigma(\Theta(N)) \cap  V_{\bb R}(I)$, which
    is not isolated in $\sigma(\Theta(N))$, and $z\in \sigma(\Theta(N))\setminus V_{\bb R}(I)$
    sufficiently near at $w$
    by \thref{taylormehrdimremab} the expression
    \[
	 f_N(z) - f_N(w)\vert_{x=\Re z, y = \Im z} = 
    \]
    \[
	f(\Re z, \Im z) - \hspace{-2mm} \sum_{(k,l) \in J(w)}
	\frac{1}{k!l!} \, \frac{\partial^{k+l} f}{\partial x^k\partial y^l}(\Re w, \Im w) 
	\cdot (\Re z-\Re w)^k (\Im z-\Im w)^l 
    \]
    is a $O(\max(|\Re(z-w)|^{d_x(w)}, |\Im(z-w)|^{d_y(w)}))$, and therefore a $O(\chi_w(z))$
    as $z\to w$. Consequently $f_N\in \mc F_{N}$.
\end{proof}

\begin{lemma}\thlab{einduF33}
    If $\phi \in \mc F_{N}$ is such that $\phi(z)$ is invertible in $\bb C, \mc A(z)$ or $\mc B(z)$, respectively, 
    for all $z\in \big(\sigma(\Theta(N)) \cup V_{\bb R}(I) \big)\dot\cup (V(I)\setminus \bb R^2)$
    and such that 
    $0\in\bb C$ does not belong to the closure of 
    $\phi\big(\sigma(\Theta(N)) \setminus V_{\bb R}(I)\big)$, then
    $\phi^{-1}: z\mapsto \phi(z)^{-1}$ also belongs to $\mc F_{N}$.
\end{lemma}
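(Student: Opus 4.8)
The plan is to verify directly that $\phi^{-1}$, defined fibrewise by $\phi^{-1}(z) := \phi(z)^{-1}$, meets the requirements of \thref{FdefklM2}. First I would note that this makes sense: by hypothesis $\phi(z)$ is invertible in $\bb C$, in $\mc A(z)$, or in $\mc B(z)$ at every point $z$ in the common domain of the functions of $\mc M_N$, so the fibrewise inverse lies in the correct fibre throughout and $\phi^{-1}$ is a genuine element of $\mc M_N$. It then remains to check the boundedness and Borel measurability of $\phi^{-1}$ on $\sigma(\Theta(N)) \setminus V_{\bb R}(I)$ and the growth condition \eqref{fn8qw3bab} at each $w \in \sigma(\Theta(N)) \cap V_{\bb R}(I)$ that is not isolated in $\sigma(\Theta(N))$.

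For the first point, since $0$ does not lie in the closure of $\phi\big(\sigma(\Theta(N)) \setminus V_{\bb R}(I)\big)$, there is $\delta > 0$ with $|\phi(z)| \geq \delta$ for all $z \in \sigma(\Theta(N)) \setminus V_{\bb R}(I)$; hence $|\phi^{-1}(z)| \leq \delta^{-1}$ there, and $\phi^{-1}$ is Borel measurable on that set as the composition of the Borel function $\phi$ with $\zeta \mapsto 1/\zeta$, the latter being continuous on $\{\zeta \in \bb C : |\zeta| \geq \delta\}$.

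The substantive step is the growth condition at a non-isolated point $w \in \sigma(\Theta(N)) \cap V_{\bb R}(I)$ of $\sigma(\Theta(N))$. I would fix a representative $p \in \bb C[x,y]$ of the coset $\phi(w) \in \mc A(w)$ and a representative $q \in \bb C[x,y]$ of $\phi(w)^{-1} \in \mc A(w)$, so that $qp - \mathds{1} \in P(w)\cdot Q(w) \subseteq Q(w)$; as remarked after \thref{FdefklM2}, the particular representatives do not affect the estimate below. Writing $p(z)$ for $p(\Re z,\Im z)$ and $q(z)$ for $q(\Re z,\Im z)$, for $z \in \sigma(\Theta(N)) \setminus V_{\bb R}(I)$ near $w$ we have $|\phi(z)| \geq \delta$ and
\[
  \phi^{-1}(z) - q(z) \;=\; \frac{1 - q(z)\,\phi(z)}{\phi(z)} \;=\; \frac{\bigl(1 - q(z)p(z)\bigr) - q(z)\bigl(\phi(z) - p(z)\bigr)}{\phi(z)}\,.
\]
Here $1 - q(z)p(z) = -(qp - \mathds{1})(z) = O(\chi_w(z))$ as $z \to w$, since $qp - \mathds{1} \in Q(w)$ and every element of $Q(w)$ is $O(\chi_w)$ by the remarks following \thref{abschaefu}; moreover $q$ is a polynomial, hence bounded near $w$, and $\phi(z) - p(z) = O(\chi_w(z))$ because $\phi \in \mc F_N$. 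Dividing by $\phi(z)$, whose modulus stays $\geq \delta$, yields $\phi^{-1}(z) - q(z) = O(\chi_w(z))$ as $\sigma(\Theta(N)) \setminus V_{\bb R}(I) \ni z \to w$, which is exactly \eqref{fn8qw3bab} for $\phi^{-1}$. As no condition is imposed at isolated points of $\sigma(\Theta(N))$ or at points of $V(I) \setminus \bb R^2$, we conclude $\phi^{-1} \in \mc F_N$.

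I expect the main obstacle to be the bookkeeping in this last step: translating the abstract invertibility of $\phi(w)$ in the finite-dimensional algebra $\mc A(w)$ into the polynomial membership $qp - \mathds{1} \in Q(w)$, and then using that membership in $Q(w)$ forces the order of growth $O(\chi_w)$ towards $w$. Everything else is a routine estimate.
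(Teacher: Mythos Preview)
Your proof is correct and follows essentially the same route as the paper's: both verify boundedness and measurability of $\phi^{-1}$ on $\sigma(\Theta(N))\setminus V_{\bb R}(I)$ from the hypothesis that $0$ avoids the closure of the range, and both establish \eqref{fn8qw3bab} using the two ingredients $\phi(z)-p(z)=O(\chi_w)$ (from $\phi\in\mc F_N$) and $qp-\mathds{1}\in P(w)\cdot Q(w)\subseteq Q(w)$, hence $O(\chi_w)$. The only difference is the algebraic organisation of the last step: the paper adds and subtracts $1/p(z)$, producing two terms and requiring the observation that $p(z)\neq 0$ near $w$, whereas you write $\phi^{-1}(z)-q(z)=\bigl((1-q(z)p(z))-q(z)(\phi(z)-p(z))\bigr)/\phi(z)$ directly over $\phi(z)$ and thus never need $p(z)\neq 0$; this is a mild simplification but not a different idea.
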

\begin{proof}
    By the first assumption $\phi^{-1}$ is a well-defined object belonging to $\mc M_{N}$.
    Clearly, with $\phi$ also $z\mapsto \phi(z)^{-1}=\frac{1}{\phi(z)}$ is
    measurable on $\sigma(\Theta(N)) \setminus V_{\bb R}(I)$.
    By the second assumption of the present lemma 
    $z\mapsto \phi(z)^{-1}=\frac{1}{\phi(z)}$ is bounded on this set.
    
    It remains to verify \eqref{fn8qw3bab} for $\phi^{-1}$ at each
    $w \in \sigma(\Theta(N)) \cap  V_{\bb R}(I)$, which is not isolated in $\sigma(\Theta(N)$.
    To do so, first note that due to $\phi(w)$'s invertibility for $z\in \sigma(\Theta(N)) \setminus V_{\bb R}(I)$
    sufficiently near at $w$ we have $\phi(w)\vert_{x=\Re z, y = \Im z} = p(z)\neq 0$, 
    where $p(x,y)$ is a representative of $\phi(w)$.
    Now calculate   \\
    \begin{equation}\label{dfbz45pre}
	 \hspace*{-6cm}      \phi^{-1}(z) - \phi(w)^{-1}\vert_{x=\Re z, y = \Im z} =
    \end{equation}
    \begin{equation}\label{dfbz45}
	  \hspace*{-2cm} = \frac{1}{\phi(z)} - 
		\frac{1}{\phi(w)\vert_{x=\Re z, y = \Im z}} +
    \end{equation}
\begin{equation}\label{fght33}
	  \hspace*{4cm} + \frac{1}{\phi(w)\vert_{x=\Re z, y = \Im z}} - 
	      \phi(w)^{-1}\vert_{x=\Re z, y = \Im z} \,.
\end{equation}
    The expression in \eqref{dfbz45} can be written as
    \[
	\frac{1}{\phi(z) \cdot \phi(w)\vert_{x=\Re z, y = \Im z}} \cdot
		\left( \phi(z) - \phi(w)\vert_{x=\Re z, y = \Im z} \right) \,.
    \]
    Here $\frac{1}{\phi(z)}$ is bounded by assumption.
    The assumed invertibility of $\phi(w)$ implies the boundedness $\phi(w)\vert_{x=\Re z, y = \Im z}$
    on a certain neighbourhood of $w$. 
    From $\phi\in \mc F_N$ we then conclude that \eqref{dfbz45} is a 
    $O(\chi_w(z))$ for $z\to w$.

    \eqref{fght33} can be rewritten as
    \[
      - \frac{1}{\phi(w)\vert_{x=\Re z, y = \Im z}} \cdot
      \Big(\phi(w)\vert_{x=\Re z, y = \Im z} \cdot \phi(w)^{-1}\vert_{x=\Re z, y = \Im z}- 1\Big) \,.
    \]
    The product in the brackets is a representative of 
    $\phi(w)\cdot \phi(w)^{-1}= 1 + (P(w)\cdot Q(w)) \in \mc A(w)$. Hence, \eqref{fght33} equals to 
    $\frac{1}{\phi(w)\vert_{x=\Re z, y = \Im z}} q(\Re z,\Im z)$ for a $q\in (P(w)\cdot Q(w))$, and is 
    therefore a $O(\chi_w(z))$ for $z\to w$. Altogether \eqref{dfbz45pre} is a $O(\chi_w(z))$ for $z\to w$.
    Thus, $\phi^{-1}\in \mc F_N$.
\end{proof}

\section{Functional Calculus for zero-dimensional $I$}
\label{Funcalzero}

\begin{lemma}\thlab{uzgv24}
    For each $\phi\in\mc F_N$ there exists $p\in\bb C[x,y]$ and complex valued
    $f_1,\dots,f_m \in \mf B(\sigma(\Theta(N)) \cup V_{\bb R}(I))$ with 
    $f_j(z)=0$ for $z\in V_{\bb R}(I)$ such that
    \[
	\phi(z) = p_N(z) + \sum_j f_j(z) \, (p_j)_N(z) 
    \]
    for all $z\in \sigma(\Theta(N)) \cup V_{\bb R}(I)$, and that
    $\phi((\xi,\eta)) = p_N((\xi,\eta))$ for all $(\xi,\eta) \in V(I)\setminus \bb R^2$.
\end{lemma}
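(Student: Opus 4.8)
The plan is to construct the polynomial $p$ and the bounded measurable functions $f_1,\dots,f_m$ in two stages: first handle the finitely many "special" points of $V(I)$ by interpolation, then deal with the measurable part on $\sigma(\Theta(N))\setminus V_{\bb R}(I)$ by a straightforward division. The finite set $V(I)=\{a_1,\dots,a_l\}$ splits as $V_{\bb R}(I)\dot\cup(V(I)\setminus\bb R^2)$, and at each $a\in V(I)$ the value $\phi(a)$ lives in a finite-dimensional quotient algebra $\mc A(a)$ or $\mc B(a)$. I would first build a single polynomial $p\in\bb C[x,y]$ whose image in $\mc A(a)$ (for $a\in V_{\bb R}(I)$) and in $\mc B(a)$ (for $a\in V(I)\setminus\bb R^2$) agrees with $\phi(a)$ simultaneously. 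This is exactly what the Chinese Remainder Theorem isomorphisms \eqref{chreth} and the isomorphism $\psi$ in \thref{algle1} provide: since the ideals $P(a)\cdot Q(a)$ ($a\in V_{\bb R}(I)$) and $Q(a)$ ($a\in V(I)\setminus\bb R^2$) are pairwise comaximal, the natural map $\bb C[x,y]\to\varprod_{a\in V_{\bb R}(I)}\mc A(a)\times\varprod_{a\in V(I)\setminus\bb R^2}\mc B(a)$ is onto, so a suitable $p$ exists. By \thref{bweuh30po}, $p_N(a)=p+(P(a)\cdot Q(a))$ on $V_{\bb R}(I)$ and $p_N(\xi,\eta)=p+Q((\xi,\eta))$ on $V(I)\setminus\bb R^2$, so this $p$ already satisfies $\phi=p_N$ at every point of $V(I)$; in particular the required identity $\phi((\xi,\eta))=p_N((\xi,\eta))$ on $V(I)\setminus\bb R^2$ holds, and nothing further need be done there.

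It remains to correct the discrepancy $\phi-p_N$ on $\sigma(\Theta(N))\cup V_{\bb R}(I)$ using the $f_j$. Set $g(z):=\phi(z)-p_N(z)$ for $z\in\sigma(\Theta(N))\setminus V_{\bb R}(I)$; this is a bounded, Borel measurable, $\bb C$-valued function that vanishes on no required set a priori, but by \thref{gehzuFab} applied to $p$ (viewed via $p_N$) together with $\phi\in\mc F_N$, for each non-isolated $w\in\sigma(\Theta(N))\cap V_{\bb R}(I)$ we have $g(z)=O(\chi_w(z))$ as $z\to w$ along $\sigma(\Theta(N))\setminus V_{\bb R}(I)$, and $\chi_w$ is comparable to $\max_j|p_j(z)|$ near $w$ because $p_1,\dots,p_m$ generate $I$ and hence are contained in $Q(w)$ up to the growth-order equivalence noted after \thref{abschaefu}. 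The natural choice is then
\[
  f_j(z):=g(z)\cdot\frac{\overline{p_j(z)}}{\sum_{k=1}^m|p_k(z)|^2}\qquad(z\in\sigma(\Theta(N))\setminus V_{\bb R}(I)),
\]
extended by $0$ on $V_{\bb R}(I)$, so that $\sum_j f_j(z)\,p_j(z)=g(z)$ wherever $\sum_k|p_k(z)|^2\neq0$, i.e.\ off $V_{\bb R}(I)$. Each $f_j$ is Borel measurable; boundedness follows precisely from the $O(\chi_w)$ estimate at the finitely many critical points $w$ (away from which $\sum_k|p_k|^2$ is bounded below on the compact $\sigma(\Theta(N))$), so $f_j\in\mf B(\sigma(\Theta(N))\cup V_{\bb R}(I))$ and $f_j=0$ on $V_{\bb R}(I)$ by construction. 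Finally, for $z\in V_{\bb R}(I)$ we have $(p_j)_N(z)=p_j+(P(z)\cdot Q(z))$ and $f_j(z)=0$, so the claimed identity $\phi(z)=p_N(z)+\sum_j f_j(z)(p_j)_N(z)$ reduces at such $z$ to $\phi(z)=p_N(z)$, which we arranged in the first stage; and for $z\in\sigma(\Theta(N))\setminus V_{\bb R}(I)$ it reduces to $\phi(z)-p_N(z)=g(z)=\sum_j f_j(z)p_j(z)$, which holds by the choice of $f_j$.

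The main obstacle is the interplay between the two stages: the polynomial $p$ chosen via CRT for the point values is far from unique, and one must be sure the leftover $g$ still obeys the right growth condition so that the $f_j$ come out bounded. The key point making this work is the growth-order equivalence of $\chi_w$ under change of generators (stated after \thref{abschaefu}): although $p_1,\dots,p_m$ generate $I=\bigcap Q(a)$ rather than $Q(w)$ individually, locally near $w$ the ideal $I$ and $Q(w)$ cut out the same variety germ, so $\max_j|p_j(z)|$ and $\chi_w(z)$ are mutually $O$ of each other as $z\to w$; this is what converts $g=O(\chi_w)$ into $|g(z)|/\sum_k|p_k(z)|^2=O(1/\max_j|p_j(z)|)\cdot O(1)$ being bounded. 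A secondary technical point is checking measurability of the ratio at points where the denominator vanishes — but $\sum_k|p_k(z)|^2=0$ exactly on $V_{\bb R}(I)$, a finite set, where we have defined $f_j$ to be $0$, so measurability is immediate. I would not expect the CRT/$\psi$ surjectivity or the elementary boundedness bookkeeping to cause any real difficulty.
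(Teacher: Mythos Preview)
Your proposal is correct and follows the same two-stage strategy as the paper: first use the Chinese Remainder isomorphism $\psi$ from \thref{algle1} (with $W=V_{\bb R}(I)$) to produce a polynomial $p$ whose classes in $\mc A(w)$ and $\mc B((\xi,\eta))$ agree with $\phi$ at every point of $V(I)$, then divide the remainder $g=\phi-p_N$ by the $p_j$ on $\sigma(\Theta(N))\setminus V_{\bb R}(I)$.

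The one noteworthy difference is the formula for the $f_j$. The paper simply takes
\[
    f_j(z)=\frac{\phi(z)-p(z)}{\sum_k p_k(z)}
\]
(the same function for every $j$), and then invokes \thref{speknorm} twice: once to see that the denominator does not vanish on $\sigma(\Theta(N))\setminus V_{\bb R}(I)$, and once to deduce $\chi_w(z)=O\big(\sum_k p_k(z)\big)$ there, yielding boundedness. Your choice $f_j=g\,\overline{p_j}/\sum_k|p_k|^2$ has a denominator vanishing \emph{exactly} on $V_{\bb R}(I)$, so well-definedness is automatic and boundedness reduces to the purely local comparability of $\chi_w$ with $\max_j|p_j|$ near $w$; no appeal to \thref{speknorm} is needed. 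That is a mild simplification.

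One small imprecision: your justification that $\chi_w$ and $\max_j|p_j|$ are mutually $O$ of each other invokes the remark after \thref{abschaefu}, but that remark concerns different \emph{generators of $Q(w)$}, whereas $p_1,\dots,p_m$ generate only $I\subsetneq Q(w)$. The claim is nevertheless true: pick $q\in\prod_{a\neq w}Q(a)$ with $q(w)\neq 0$; then each $h_iq\in I=\langle p_1,\dots,p_m\rangle$, so $|h_i(z)|\leq |q(z)|^{-1}\cdot C\max_j|p_j(z)|$ near $w$, giving $\chi_w=O(\max_j|p_j|)$. The reverse bound is immediate since $p_j\in Q(w)$.
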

\begin{proof}
    We apply \thref{algle1} to $W=V_{\bb R}(I)$. The fact, that $\psi$ is an isomorphism, then 
    yields the existence of a polynomial $p\in \bb C[x,y]$ such that $p + (P(w)\cdot Q(w)) = \phi(w)$
    for all $w \in V_{\bb R}(I)$ and such that $p + Q((\xi,\eta)) =\phi((\xi,\eta))$
    for all $(\xi,\eta) \in V(I)\setminus \bb R^2$. 
    
    By \thref{bweuh30po} we have $\phi(w)= p + (P(w)\cdot Q(w)) =p_N(w) \in \mc A(w)$
    for $w\in V_{\bb R}(I)$.
    For $(\xi,\eta) \in V(I)\setminus \bb R^2$ we have $\phi((\xi,\eta)) = p + Q((\xi,\eta))= p_N((\xi,\eta)) \in 
    \mc B((\xi,\eta))$.
    
    For $j=1,\dots,m$ we set $f_j(z):=\frac{\phi(z) - p(z)}{\sum_k p_k(z)}$ if 
    $z\in \sigma(\Theta(N))\setminus V_{\bb R}(I)$
    (see \thref{speknorm}), and $f_j(z) = 0$ if 
    $z\in V_{\bb R}(I)$. On $\sigma(\Theta(N))\cup V_{\bb R}(I)$ we then have 
    \[
	\phi(z) = p_N(z) + \sum_j f_j(z) \, (p_j)_N(z) \,.
    \]
    It remains to verify that the functions $f_j$ are measurable and bounded on $\sigma(\Theta(N))\setminus V_{\bb R}(I)$. 
    The measurability easily follows
    from the definition of $f_j$ and the measurability of $\phi$ on this set. Since there are only finitely many
    points in $V_{\bb R}(I)$, the measurability of $f_j$ on $\sigma(\Theta(N))\cup V_{\bb R}(I)$ follows.
    
    Concerning boundedness, note that by \thref{gehzuFab} $\phi-p_N$ belongs to $\mc F_N$. Since any representative 
    $(\phi-p_N)(w)\vert_{x=\Re z, y = \Im z}$ of $(\phi-p_N)(w)\in \mc A(w)$ belongs to $P(w)\cdot Q(w) \subseteq Q(w)$, 
    we have
    $(\phi-p_N)(z)=O(\chi_w(z))$ as $z\to w$ for any $w\in \sigma(\Theta(N))\cap V_{\bb R}(I)$
    which is not isolated on $\sigma(\Theta(N))$.
    By \thref{speknorm} we have $\chi_w(z) = O(\sum_k p_k(z))$ as $z\to w$ for $z\in \sigma(\Theta(N))\setminus V_{\bb R}(I)$.
    Therefore,
    \[
	f_j(z) = \frac{\phi(z) - p(z)}{\sum_k p_k(z)} = O(1) \ \ \text{ as } \ \ z\to w
    \]
    for $z\in \sigma(\Theta(N))\setminus V_{\bb R}(I)$.
\end{proof}

\begin{definition}\thlab{deltadef}
Let $\Delta$ be the set of all pairs $(\phi; (p,f_1\vert_{\sigma(\Theta(N))},\dots,f_m\vert_{\sigma(\Theta(N))}))$
such that all assertions from \thref{uzgv24} hold true for $\phi$ and $ (p,f_1,\dots,f_m)$. 
\end{definition}

\begin{remark}\thlab{deltadefpo}
It is straight forward to check that
$\Delta$ is a linear subspace of 
$\mc F_N \times \Big(\bb C[x,y] \times \mf B\big(\sigma(\Theta(N))\big) \times \dots \times \mf B\big(\sigma(\Theta(N))\big)\Big)$, i.e.\
a linear relations. Moreover, it is easy to check that with 
$(\phi; (p,f_1\vert_{\sigma(\Theta(N))},\dots,f_m\vert_{\sigma(\Theta(N))}))$ also
$(\phi^\#; (p^\#, \overline{f_1\vert_{\sigma(\Theta(N))}},\dots,\overline{f_m\vert_{\sigma(\Theta(N))}}))$ belongs to $\Delta$; 
see \thref{compmist}.
\end{remark}

$\Delta$ is also compatible with multiplication as will be shown next.

\begin{lemma}\thlab{ujr7984}
    If both, $(\phi; (p,f_1\vert_{\sigma(\Theta(N))},\dots,f_m\vert_{\sigma(\Theta(N))}))$ and 
    $(\psi; (q,g_1\vert_{\sigma(\Theta(N))},\dots,g_m\vert_{\sigma(\Theta(N))}))$, belong to 
    $\Delta$, then also the pair $(\phi\cdot\psi; (r,h_1\vert_{\sigma(\Theta(N))},\dots,h_m\vert_{\sigma(\Theta(N))}))$
     belongs to $\Delta$, where (see \thref{Psidefpost})
     \begin{multline*}
	(r,h_1\vert_{\sigma(\Theta(N))},\dots,h_m\vert_{\sigma(\Theta(N))}) = \\ 
	(p,f_1\vert_{\sigma(\Theta(N))},\dots,f_m\vert_{\sigma(\Theta(N))}) \cdot 
	(q,g_1\vert_{\sigma(\Theta(N))},\dots,g_m\vert_{\sigma(\Theta(N))}) \,.
     \end{multline*}
\end{lemma}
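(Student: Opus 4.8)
The plan is to exhibit the polynomial $r$ and the coefficient functions $h_1,\dots,h_m$ explicitly, and then to check the conditions of \thref{deltadef} (equivalently, of \thref{uzgv24}) for the pair $(\phi\cdot\psi;(r,h_1\vert_{\sigma(\Theta(N))},\dots,h_m\vert_{\sigma(\Theta(N))}))$. Since the two given pairs lie in $\Delta$, there are $f_j,g_j\in\mf B\big(\sigma(\Theta(N))\cup V_{\bb R}(I)\big)$, vanishing on $V_{\bb R}(I)$ and restricting on $\sigma(\Theta(N))$ to the stated functions, with $\phi(z)=p_N(z)+\sum_j f_j(z)(p_j)_N(z)$ and $\psi(z)=q_N(z)+\sum_k g_k(z)(p_k)_N(z)$ for $z\in\sigma(\Theta(N))\cup V_{\bb R}(I)$, and $\phi((\xi,\eta))=p_N((\xi,\eta))$, $\psi((\xi,\eta))=q_N((\xi,\eta))$ for $(\xi,\eta)\in V(I)\setminus\bb R^2$. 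I put $r:=pq$ and, on $\sigma(\Theta(N))$, $h_j:=pg_j+qf_j+f_j\sum_{k=1}^m g_kp_k$, extended by $0$ to $V_{\bb R}(I)$; by Definition \thref{Psidefpost} this tuple is exactly $(p,f_1\vert_{\sigma(\Theta(N))},\dots)\cdot(q,g_1\vert_{\sigma(\Theta(N))},\dots)$. The whole argument rests on the facts that $\mc M_N$ is a commutative $*$-algebra with pointwise operations, so $(\phi\cdot\psi)(z)=\phi(z)\psi(z)$ at every point, and that $f\mapsto f_N$ is a homomorphism (see \thref{bweuh30}, \thref{bweuh30po}), which in particular gives $r_N=p_Nq_N$.

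On $\sigma(\Theta(N))\setminus V_{\bb R}(I)$ all the quantities involved are complex numbers; multiplying out
\[
\phi(z)\psi(z)=\Big(p(z)+\sum_{j=1}^m f_j(z)p_j(z)\Big)\Big(q(z)+\sum_{k=1}^m g_k(z)p_k(z)\Big)
\]
and collecting the coefficient of each $p_j(z)$ — after regrouping the double sum $\sum_{j,k}f_jg_kp_jp_k$ as $\sum_j p_j\big(f_j\sum_k g_kp_k\big)$ — gives $(\phi\cdot\psi)(z)=r(z)+\sum_{j=1}^m h_j(z)p_j(z)=r_N(z)+\sum_{j=1}^m h_j(z)(p_j)_N(z)$. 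Here each $h_j$ is bounded and measurable on $\sigma(\Theta(N))\setminus V_{\bb R}(I)$, since $p,q,p_1,\dots,p_m$ are polynomials restricted to the compact set $\sigma(\Theta(N))$ while $f_j,g_j$ are bounded and measurable; as $V_{\bb R}(I)$ is finite and $f_j,g_j$ vanish there, $h_j$ is bounded and measurable on all of $\sigma(\Theta(N))\cup V_{\bb R}(I)$. Moreover, at a point of $\sigma(\Theta(N))\cap V_{\bb R}(I)$ the defining formula for $h_j$ already evaluates to $0$ (each of its summands contains an $f_j$, a $g_j$, or some $p_k$, all of which vanish there), so extending $h_j$ by $0$ to $V_{\bb R}(I)$ is consistent.

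The decomposition at the remaining points is immediate. For $z\in V_{\bb R}(I)$ the \thref{uzgv24}-representations of $\phi$ and $\psi$ collapse, by vanishing of the $f_j$ and $g_j$, to $\phi(z)=p_N(z)$ and $\psi(z)=q_N(z)$ in $\mc A(z)$, hence $(\phi\cdot\psi)(z)=p_N(z)q_N(z)=(pq)_N(z)=r_N(z)$, in accordance with $\sum_j h_j(z)(p_j)_N(z)=0$; for $(\xi,\eta)\in V(I)\setminus\bb R^2$ one obtains in the same way $(\phi\cdot\psi)((\xi,\eta))=p_N((\xi,\eta))q_N((\xi,\eta))=r_N((\xi,\eta))$ in $\mc B((\xi,\eta))$. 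Together with the previous paragraph this covers all of $\big(\sigma(\Theta(N))\cup V_{\bb R}(I)\big)\dot\cup(V(I)\setminus\bb R^2)$, so every requirement of \thref{deltadef} is met by $(\phi\cdot\psi;(r,h_1\vert_{\sigma(\Theta(N))},\dots,h_m\vert_{\sigma(\Theta(N))}))$, and the lemma follows.

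No genuine difficulty arises; the argument is a bookkeeping computation using the $*$-algebra structure of $\mc M_N$ and the definition of the multiplication on $\mc R$. The only points that deserve a moment's attention are the asymmetric regrouping of the double sum, which is forced by Definition \thref{Psidefpost} (the symmetric alternative $\sum_k p_k\big(g_k\sum_j f_jp_j\big)$ produces the second, equally valid, form already seen in \thref{lre4}), and the observation that the $h_j$ automatically vanish on $V_{\bb R}(I)$, so that they do define admissible elements of $\mf B\big(\sigma(\Theta(N))\cup V_{\bb R}(I)\big)$.
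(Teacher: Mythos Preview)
Your proof is correct and follows essentially the same approach as the paper's: both set $r=pq$ and $h_j=pg_j+qf_j+f_j\sum_k g_kp_k$, then verify the decomposition pointwise on $\sigma(\Theta(N))\setminus V_{\bb R}(I)$, on $V_{\bb R}(I)$, and on $V(I)\setminus\bb R^2$, using the multiplicativity of $p\mapsto p_N$ and the vanishing of $f_j,g_j$ on $V_{\bb R}(I)$. Your version is slightly more explicit about boundedness and measurability of the $h_j$ and about the consistency of the extension by zero, but these are routine observations that the paper leaves implicit.
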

\begin{proof}
      On $\sigma(\Theta(N)) \cup V_{\bb R}(I)$ we have 
      \[
	\phi(z) = p_N(z) + \sum_j f_j(z) (p_j)_N(z) \ \text{ and } \
	\psi(z) = q_N(z) + \sum_j g_j(z) (p_j)_N(z) \,.
      \] 
      Moreover,
      $f_j(z)=0=g_j$ for $z\in V_{\bb R}(I)$, and 
      $\phi((\xi,\eta)) = p_N((\xi,\eta))$, $\psi((\xi,\eta)) = q_N((\xi,\eta))$
      for all $(\xi,\eta) \in V(I)\setminus \bb R^2$.
      
      Since $p\mapsto p_N$ is compatible with multiplication, for $r=p\cdot q$ we have
      $(\phi\cdot\psi)((\xi,\eta)) = r_N((\xi,\eta))$ for all $(\xi,\eta) \in V(I)\setminus \bb R^2$.
      Clearly, $h_j = pg_j+qf_j + f_j \sum_{k=1}^m g_k p_k$ vanishes on $V_{\bb R}(I)$.
      For $z\in \sigma(\Theta(N)) \cup V_{\bb R}(I)$ we have
     \begin{multline*}
	  \phi(z) \, \psi(z) = p_N(z)\, q_N(z) + \\ 
	  \sum_j \Big(p_N(z) g_j(z) + q_N(z)f_j(z) + f_j(z) \sum_k g_k(z) (p_k)_N(z)\Big) \, (p_j)_N(z) \,, 
     \end{multline*}
      which, for $z\in V_{\bb R}(I)$, coincides with $r_N(z) = r_N(z) + \sum_j h_j(z) (p_j)_N(z)$.
      For $z\in \sigma(\Theta(N)) \setminus V_{\bb R}(I)$ the above equation can be written as 
      \begin{align*}
	  \phi(z)\, \psi(z) & = r(z) + 
	  \sum_j \Big(p(z) g_j(z) + q(z)f_j(z) + f_j(z) \sum_k g_k(z) p_k(z)\Big) \, p_j(z) \\ & =
	  r_N(z) + \sum_j h_j(z) \, (p_j)_N(z) \,.
      \end{align*}
\end{proof}

We are going to determine the multivalued part $\mul \Delta$ of $\Delta$.

\begin{lemma}\thlab{ujr43094}
    Assume that $p(x,y) \in\bb C[x,y]$ and 
    $f_1,\dots,f_m \in \mf B(\sigma(\Theta(N))\cup V_{\bb R}(I))$ with $f_j(z)=0$ for $z\in V_{\bb R}(I)$
    such that 
    \[
	0 = p_N(z) + \sum_j f_j(z) (p_j)_N(z) 
    \]
    on $\sigma(\Theta(N))\cup V_{\bb R}(I)$ and that $\phi((\xi,\eta)) = 0$ for all 
    $(\xi,\eta) \in V(I)\setminus \bb R^2$. Then $(p,f_1\vert_{\sigma(\Theta(N))},\dots,f_m\vert_{\sigma(\Theta(N))})$ belongs to 
    the ideal $\mc N$ in $\mc R$ as defined in \thref{Psidef}.  
\end{lemma}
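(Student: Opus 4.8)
The plan is to verify directly the two defining conditions for membership in $\mc N$ listed in \thref{Psidef}, reading off each of them from the hypotheses at the three kinds of points in the domain of $\mc M_N$.

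First I would treat the ``continuous part''. For $z\in\sigma(\Theta(N))\setminus V_{\bb R}(I)$ the values $p_N(z)$ and $(p_j)_N(z)$ are, by \thref{feinbetefab}, simply the complex numbers $p(\Re z,\Im z)$ and $p_j(\Re z,\Im z)$. Hence the assumed identity $0=p_N(z)+\sum_j f_j(z)(p_j)_N(z)$ on $\sigma(\Theta(N))\cup V_{\bb R}(I)$ restricts, on $\sigma(\Theta(N))\setminus V_{\bb R}(I)$, to $0=p(z)+\sum_j f_j(z)p_j(z)$, which is exactly the first requirement in \thref{Psidef} for the tuple $(p,f_1\vert_{\sigma(\Theta(N))},\dots,f_m\vert_{\sigma(\Theta(N))})$.

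Next I would translate the conditions at the points of $V(I)$ into an ideal membership of $p$. For $w\in V_{\bb R}(I)$ we have $f_j(w)=0$, so the hypothesis gives $p_N(w)=0$ in $\mc A(w)$; by \thref{bweuh30po} this means $p\in P(w)\cdot Q(w)$. For $(\xi,\eta)\in V(I)\setminus\bb R^2$ the hypothesis $p_N((\xi,\eta))=0$ in $\mc B((\xi,\eta))$ means, again by \thref{bweuh30po}, that $p\in Q((\xi,\eta))$. Thus $p$ lies in the ideal
\[
    J := \bigcap_{a\in V(I)\setminus V_{\bb R}(I)} Q(a) \ \cap \ \bigcap_{a\in V_{\bb R}(I)} \big(P(a)\cdot Q(a)\big) \,,
\]
that is, the ideal of \thref{algle1} for $W=V_{\bb R}(I)$ (identifying $V_{\bb R}(I)\subseteq\bb C$ with $V(I)\cap\bb R^2\subseteq\bb C^2$ via $\tau$). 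Applying the final assertion of \thref{algle1}, I obtain $u_1,\dots,u_m\in\bb C[x,y]$ with $p=\sum_k u_k p_k$ and $u_k(a)=0$ for all $a\in V_{\bb R}(I)$. In particular $u_j(z)=0$ for $z\in V_{\bb R}(I)\cap\sigma(\Theta(N))$, and since also $f_j(z)=0$ on $V_{\bb R}(I)$, we get $(f_j+u_j)(z)=0$ for all $z\in V_{\bb R}(I)\cap\sigma(\Theta(N))$ and $j=1,\dots,m$ — the second requirement in \thref{Psidef}. Together with the first step this shows $(p,f_1\vert_{\sigma(\Theta(N))},\dots,f_m\vert_{\sigma(\Theta(N))})\in\mc N$.

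There is no real obstacle here once this dictionary is set up; the only point to be careful about is that the hypothesis is assumed on all of $\sigma(\Theta(N))\cup V_{\bb R}(I)$ and at every point of $V(I)\setminus\bb R^2$, so that $p\in P(a)\cdot Q(a)$ for every $a\in V_{\bb R}(I)$ and $p\in Q(a)$ for every $a\in V(I)\setminus\bb R^2$ — this is precisely what makes \thref{algle1} applicable with $W=V_{\bb R}(I)$ and yields the decomposition $p=\sum_k u_k p_k$ with the $u_k$ vanishing on $W$.
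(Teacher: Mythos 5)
Your proof is correct and follows essentially the same route as the paper's: restrict the identity to $\sigma(\Theta(N))\setminus V_{\bb R}(I)$ for the first condition, deduce $p\in P(w)\cdot Q(w)$ for $w\in V_{\bb R}(I)$ and $p\in Q((\xi,\eta))$ for $(\xi,\eta)\in V(I)\setminus\bb R^2$ via \thref{bweuh30po}, and then invoke \thref{algle1} with $W=V_{\bb R}(I)$ to get $p=\sum_j u_jp_j$ with $u_j$ vanishing on $V_{\bb R}(I)$. (You also correctly read the paper's typo ``$\phi((\xi,\eta))=0$'' as $p_N((\xi,\eta))=0$.)
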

\begin{proof}
    Clearly, $p +\sum_{j=1}^m f_j p_j = 0$ on $\sigma(\Theta(N))\setminus V_{\bb R}(I)$.

    According to \thref{bweuh30po} $p+(P(w)\cdot Q(w)) = 0 \in \mc A(w)$ for all $w\in V_{\bb R}(I)$ and
    $p+Q((\xi,\eta)) = 0 \in \mc B((\xi,\eta))$ for all 
    $(\xi,\eta) \in V(I)\setminus \bb R^2$. Hence, 
    $p\in \bigcap_{(\xi,\eta)\in V(I)\setminus \bb R^2} Q((\xi,\eta)) \cap \bigcap_{w\in V_{\bb R}(I)} (P(w)\cdot Q(w))$.
    By \thref{algle1} we therefore have $p = \sum_j u_jp_j$ with $u_j(w)=0$ for all $w\in V_{\bb R}(I)$.
    We see that $(f_j + u_j)(z)=0$ for all $z\in V_{\bb R}(I)\cap \sigma(\Theta(N))$. Thus, 
    $(p,f_1\vert_{\sigma(\Theta(N))},\dots,f_m\vert_{\sigma(\Theta(N))}) \in \mc N$. 
\end{proof}

Since by \thref{lre3} $\mul \Delta \subseteq \mc N \subseteq \ker \Psi$ the composition
$\Psi \Delta$ is a well-defined linear mapping from $\mc F_N$ into $B(\mc K)$.

\begin{definition}\thlab{calcueldef}
    For $\phi\in\mc F_N$ we set $\phi(N):=(\Psi \Delta)(\phi)$.
\end{definition}

By \thref{homo1}, \thref{ujr7984} and \thref{deltadefpo} the following result can be formulated.

\begin{theorem}\thlab{tu2989}
    $\phi\mapsto \phi(N)$ constitutes a $*$-homomorphism from $\mc F_N$ into $\{N,N^*\}'' \subseteq B(\mc K)$.
    It satisfies $p_N(N)=p(A,B)$ for all $p\in\bb C[x,y]$.
\end{theorem}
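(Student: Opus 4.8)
The plan is to deduce the theorem by assembling the structural facts already established, reading the map $\phi\mapsto\phi(N)=(\Psi\Delta)(\phi)$ from \thref{calcueldef} as the composition of the linear relation $\Delta$ with the linear map $\Psi$. First I would settle well-definedness. By \thref{uzgv24} every $\phi\in\mc F_N$ has a representative, so $\dom\Delta=\mc F_N$; by \thref{deltadefpo} $\Delta$ is a linear subspace, hence a linear relation, and its multivalued part $\mul\Delta$ is contained in the ideal $\mc N$ of \thref{Psidef} by \thref{ujr43094}, while $\mc N\subseteq\ker\Psi$ by \thref{lre3}. Consequently $\Psi\Delta$ is single-valued, i.e.\ $\phi(N)$ is independent of the chosen representative $(p,f_1,\dots,f_m)$, and $\phi\mapsto\phi(N)$ is a genuine map $\mc F_N\to B(\mc K)$. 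Linearity is then immediate from the linearity of $\Delta$ and of $\Psi$ (\thref{compmist}).

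Next I would verify multiplicativity. Given $\phi,\psi\in\mc F_N$, choose representatives $(p,f_1,\dots,f_m)\in\Delta$ over $\phi$ and $(q,g_1,\dots,g_m)\in\Delta$ over $\psi$. By \thref{ujr7984} the product $(p,f_1,\dots,f_m)\cdot(q,g_1,\dots,g_m)$, formed with the multiplication of \thref{Psidefpost}, is again a representative, this time over $\phi\cdot\psi$; and by \thref{lre4} — the very identity motivating that multiplication — one has $\Psi(p,f_1,\dots,f_m)\,\Psi(q,g_1,\dots,g_m)=\Psi\big((p,f_1,\dots,f_m)\cdot(q,g_1,\dots,g_m)\big)$. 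Chaining the two gives $\phi(N)\psi(N)=(\phi\psi)(N)$. For the $\ast$-structure I would use that $\Delta$ is invariant under $(p,f_1,\dots,f_m)\mapsto(p^\#,\overline{f_1},\dots,\overline{f_m})$ and that $\Psi(p^\#,\overline{f_1},\dots,\overline{f_m})=\Psi(p,f_1,\dots,f_m)^*$ (both from \thref{deltadefpo} and \thref{compmist}): a representative of $\phi$ is thereby mapped to a representative of $\phi^\#$, so $\phi^\#(N)=\phi(N)^*$. That the image lies in $\{N,N^+\}''$ is already contained in \thref{homo1}, since $\Psi$ takes values in $\{N,N^+\}''$.

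Finally, for the normalization $p_N(N)=p(A,B)$ with $p\in\bb C[x,y]$: by \thref{bweuh30po}, $p_N(z)=p+(P(z)\cdot Q(z))$ for $z\in V_{\bb R}(I)$ and $p_N((\xi,\eta))=p+Q((\xi,\eta))$ for $(\xi,\eta)\in V(I)\setminus\bb R^2$, so the tuple $(p,0,\dots,0)$ satisfies all conditions of \thref{uzgv24} for $\phi=p_N$ with vanishing $f_j$; hence $(p_N;(p,0,\dots,0))\in\Delta$ and $p_N(N)=\Psi(p,0,\dots,0)=p(A,B)+\sum_{k=1}^m\Xi_k\big(\int 0\,dE_k\big)=p(A,B)$ (in particular $\mathds{1}_N$ is sent to $I_{\mc K}$). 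I do not expect a real obstacle at this stage: all the analytic and algebraic substance already resides in \thref{lre3}, \thref{lre4}, \thref{ujr7984} and \thref{ujr43094}, and the only point needing a moment's care is the bookkeeping that the product of representatives used in \thref{ujr7984} is literally the $\mc R$-product appearing in \thref{lre4}, so that the two identities compose without a hidden gap.
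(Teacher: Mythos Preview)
Your proposal is correct and follows essentially the same approach as the paper: the paper states the theorem as a direct consequence of \thref{homo1}, \thref{ujr7984} and \thref{deltadefpo}, and its proof consists of the single observation $(p_N;(p,0,\dots,0))\in\Delta$ for the final assertion---exactly the bookkeeping you spell out. Your argument is a faithful unpacking of these references (invoking \thref{lre4} and \thref{lre3} directly rather than via their packaging in \thref{homo1}), with no substantive deviation.
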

\begin{proof}
    The final assertion is clear because of $(p_N; (p,0,\dots,0)) \in \Delta$.
\end{proof}

\section{Spectral properties of the functional calculus}

For $w\in V_{\bb R}(I)$ we will need the following notation. By $\pi_w: \mc A(w) \to \mc B(w)$
we denote the mapping 
\[
    \pi_w(f+(P(w)\cdot Q(w))) = f+ Q(w) \,.
\]

\begin{lemma}\thlab{ehgtttpre}
    If $\phi\in \mc F_N$ vanishes everywhere except at a fixed $w\in V_{\bb R}(I)$ and 
    if $\pi_w \phi(w) = 0$, then
    \[
	\phi(N) = \Psi(0;g_1,\dots,g_m)
    \]
    for $g_1,\dots,g_m \in \mf B\big(\sigma(\Theta(N))\big)$ which vanish on 
    $(\sigma(\Theta(N)) \cup V_{\bb R}(I)) \setminus \{w\}$.
\end{lemma}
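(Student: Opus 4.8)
The plan is to start from the canonical decomposition of $\phi$ furnished by \thref{uzgv24}, express $\phi(N)$ through $\Psi$, and then correct the resulting data tuple by an element of the ideal $\mc N$ of \thref{Psidef} so that its polynomial component disappears and its scalar components become concentrated at $w$.

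First I would invoke \thref{uzgv24} to fix $p\in\bb C[x,y]$ and $f_1,\dots,f_m\in\mf B(\sigma(\Theta(N))\cup V_{\bb R}(I))$ with $f_j\equiv 0$ on $V_{\bb R}(I)$, with $\phi(z)=p_N(z)+\sum_j f_j(z)(p_j)_N(z)$ on $\sigma(\Theta(N))\cup V_{\bb R}(I)$, and with $\phi((\xi,\eta))=p_N((\xi,\eta))$ on $V(I)\setminus\bb R^2$. By \thref{uzgv24}, \thref{deltadef} and \thref{calcueldef} we then have $\phi(N)=\Psi(p;f_1\vert_{\sigma(\Theta(N))},\dots,f_m\vert_{\sigma(\Theta(N))})$. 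Now I exploit that $\phi$ vanishes off $\{w\}$: since $\phi(w)=p_N(w)=p+(P(w)\cdot Q(w))$ (see \thref{bweuh30po}), the hypothesis $\pi_w\phi(w)=0$ gives $p\in Q(w)$; the vanishing of $\phi$ at the remaining points of $V_{\bb R}(I)$ gives $p\in P(z)\cdot Q(z)$ for $z\in V_{\bb R}(I)\setminus\{w\}$; and the vanishing of $\phi$ on $V(I)\setminus\bb R^2$ gives $p\in Q((\xi,\eta))$ there. Hence $p$ lies in the ideal $J$ of \thref{algle1} built from $W:=V_{\bb R}(I)\setminus\{w\}$, so that proposition yields $u_1,\dots,u_m\in\bb C[x,y]$ with $p=\sum_k u_kp_k$ and $u_k(z)=0$ for all $z\in V_{\bb R}(I)\setminus\{w\}$.

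Next I would put $g_k\in\mf B(\sigma(\Theta(N)))$ equal to $u_k(w)$ at $w$ when $w\in\sigma(\Theta(N))$ and equal to $0$ at every other point of $\sigma(\Theta(N))$; being supported on a single point each $g_k$ is Borel and bounded and vanishes on $(\sigma(\Theta(N))\cup V_{\bb R}(I))\setminus\{w\}$. The key step is to check that
\[
  (p;\ f_1\vert_{\sigma(\Theta(N))}-g_1,\dots,f_m\vert_{\sigma(\Theta(N))}-g_m)\in\mc N .
\]
The first defining property of $\mc N$, namely $p+\sum_k(f_k-g_k)p_k=0$ on $\sigma(\Theta(N))\setminus V_{\bb R}(I)$, is immediate: there $g_k\equiv 0$ and $p_N=p$, $(p_j)_N=p_j$, $\phi\equiv 0$, so the decomposition from \thref{uzgv24} degenerates to exactly this relation. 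For the second property I use the very same $u_1,\dots,u_m$: indeed $p=\sum_k u_kp_k$, and on $V_{\bb R}(I)\cap\sigma(\Theta(N))$ we have $f_k=0$ while $(-g_k+u_k)(z)=0$, since at $z=w$ (in the case $w\in\sigma(\Theta(N))$) both terms equal $u_k(w)$, and at any other $z$ both $g_k(z)$ and $u_k(z)$ vanish.

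Finally \thref{lre3} gives $\Psi(p;\ f_1\vert_{\sigma(\Theta(N))}-g_1,\dots,f_m\vert_{\sigma(\Theta(N))}-g_m)=0$, whence by the linearity of $\Psi$ (\thref{compmist}) we obtain $\phi(N)=\Psi(p;f_1\vert_{\sigma(\Theta(N))},\dots,f_m\vert_{\sigma(\Theta(N))})=\Psi(0;g_1,\dots,g_m)$, which is the claim. The one point that genuinely requires care is the choice $W=V_{\bb R}(I)\setminus\{w\}$ in \thref{algle1}: the hypothesis $\pi_w\phi(w)=0$ is precisely what allows $p$ to be required only in $Q(w)$ at $w$ rather than in $P(w)\cdot Q(w)$, while staying in $P(z)\cdot Q(z)$ away from $w$, and this is exactly what forces the $u_k$ to vanish on all of $V_{\bb R}(I)$ except possibly at $w$. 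A secondary, purely bookkeeping point is the distinction between $w\in\sigma(\Theta(N))$ and $w\notin\sigma(\Theta(N))$ in defining the $g_k$ and in verifying the second condition for $\mc N$; in the latter case all $g_k$ vanish and the argument simply yields $\phi(N)=0$, consistently.
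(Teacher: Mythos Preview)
Your proof is correct and follows essentially the same route as the paper's: obtain the data $(p,f_1,\dots,f_m)$ from \thref{uzgv24}, use the vanishing hypotheses together with $\pi_w\phi(w)=0$ to place $p$ in the ideal $J$ of \thref{algle1} for $W=V_{\bb R}(I)\setminus\{w\}$, write $p=\sum_k u_kp_k$ with $u_k$ vanishing on $W$, and then subtract the concentrated tuple $(0;g_1,\dots,g_m)$ with $g_k=u_k(w)\delta_w$ to land in $\mc N$. Your explicit handling of the case $w\notin\sigma(\Theta(N))$ is a nice touch but otherwise the arguments coincide.
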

\begin{proof}
    Let $p(x,y)\in\bb C[x,y]$ and
    $f_1,\dots,f_m \in \mf B(\sigma(\Theta(N)) \cup V_{\bb R}(I))$ with 
    $f_j(z)=0$ for $z\in V_{\bb R}(I)$ such that
    \[
	\phi(z) = p_N(z) + \sum_j f_j(z) \, (p_j)_N(z) 
    \]
    for all $z\in \sigma(\Theta(N)) \cup V_{\bb R}(I)$, and that
    $p_N((\xi,\eta)) = \phi((\xi,\eta)) = 0$ for all $(\xi,\eta) \in V(I)\setminus \bb R^2$.
    The latter fact just means $p\in p((\xi,\eta)) \in Q((\xi,\eta))$.
    From $0=\phi(z) = p_N(z) + \sum_j f_j(z) \, (p_j)_N(z)$ for $z\in V_{\bb R}(I)\setminus \{w\}$ 
    we infer $p\in (P(z)\cdot Q(z))$. For $z=w$ this equation together with $\pi_w \phi(w) = 0$ yields
    $p \in Q(w)$.
    
    By \thref{algle1} $p=\sum_{j} u_j p_j$, where $u_j(z)=0$ for all $z\in V_{\bb R}(I) \setminus \{w\}$.
    We define $g_j$ to be zero on $\big(\sigma(\Theta(N)) \cup V_{\bb R}(I)\big)\setminus \{w\}$ and set $g_j(w)=u_j(w)$.
    The difference
    \[
	(p;f_1\vert_{\sigma(\Theta(N))},\dots,f_m\vert_{\sigma(\Theta(N))}) 
	- (0;g_1,\dots,g_m) = 
    \]
    \[
	(p; f_1\vert_{\sigma(\Theta(N))}-\delta_w(.) u_1(w),\dots,f_m\vert_{\sigma(\Theta(N))}-\delta_w(.) u_m(w))
    \]
    satisfies $p + \sum_j (f_j(z)-\delta_w(z) u_j(w)) p_j(z) = \phi(z) =0$ for $z\in \sigma(\Theta(N)) \setminus V_{\bb R}(I)$
    and $f_j(z)-\delta_w(z) u_j(w) + u_j(z) = 0$ for all $z\in V_{\bb R}(I)\cap \sigma(\Theta(N))$.
    It therefore belongs to the ideal $\mc N$ of $\mc R$. Consequently,
    \[
	\phi(N) = \Psi(p;f_1\vert_{\sigma(\Theta(N))},\dots,f_m\vert_{\sigma(\Theta(N))})
	= \Psi(0;g_1,\dots,g_m) \,.
    \]
\end{proof}

\begin{corollary}\thlab{ehgttt}
    Assume that $E\{w\} = 0$ for a fixed $w\in V_{\bb R}(I)$, which surely happens if $w\not\in \sigma(\Theta(N))$. 
    Then $\phi(N)=\psi(N)$ for all $\phi, \psi$ that coincide on 
    $\big((\sigma(\Theta(N)) \cup V_{\bb R}(I))\setminus \{w\} \big)\dot\cup (V(I)\setminus \bb R^2)$
    and that satisfy $\pi_w \phi(w) = \pi_w \psi(w)$. Here $\pi_w: \mc A(w) \to \mc B(w)$ is defined
    by $\pi_w(f+(P(w)\cdot Q(w))) = f + Q(w)$.
\end{corollary}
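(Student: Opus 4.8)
The plan is to reduce to \thref{ehgtttpre} by passing to the difference $\vartheta := \phi - \psi$. Since $\phi \mapsto \phi(N)$ is linear by \thref{tu2989}, it suffices to prove $\vartheta(N) = 0$. First I would verify that $\vartheta$ satisfies the hypotheses of \thref{ehgtttpre}: it lies in $\mc F_N$ (which is a $*$-algebra, hence closed under subtraction; cf.\ \thref{tu2989}); it vanishes on $\big((\sigma(\Theta(N)) \cup V_{\bb R}(I)) \setminus \{w\}\big) \dot\cup (V(I) \setminus \bb R^2)$ because $\phi$ and $\psi$ coincide there by assumption; and $\pi_w \vartheta(w) = \pi_w \phi(w) - \pi_w \psi(w) = 0$ by the remaining assumption. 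Hence \thref{ehgtttpre} applies to $\vartheta$ and gives
\[
    \vartheta(N) = \Psi(0; g_1, \dots, g_m) = \sum_{k=1}^m \Xi_k\Big(\int_{\sigma(\Theta_k(N))} g_k \, dE_k\Big)
\]
for suitable $g_1, \dots, g_m \in \mf B\big(\sigma(\Theta(N))\big)$ that vanish on $(\sigma(\Theta(N)) \cup V_{\bb R}(I)) \setminus \{w\}$.

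The next step is to show that each summand vanishes, using \thref{lre0}. For every $k$,
\begin{multline*}
    \Xi_k\Big(\int_{\sigma(\Theta_k(N))} g_k \, dE_k\Big) = \\
    \Xi\Big(\int_{\sigma(\Theta(N)) \setminus V_{\bb R}(I)} g_k \frac{p_k}{\sum_{l=1}^m p_l} \, dE + R_kR_k^* \int_{\sigma(\Theta(N)) \cap V_{\bb R}(I)} g_k \, dE\Big) \,.
\end{multline*}
Here the first integral is $0$ because $g_k$ vanishes on $\sigma(\Theta(N)) \setminus V_{\bb R}(I)$ (recall $w \in V_{\bb R}(I)$), and the second integral equals $g_k(w)\, E\{w\}$ --- since $V_{\bb R}(I)$ is finite and $g_k$ vanishes on $\sigma(\Theta(N)) \cap V_{\bb R}(I)$ off the single point $w$ --- which is $0$ by the hypothesis $E\{w\} = 0$. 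Thus every summand is $0$, so $\vartheta(N) = 0$ and therefore $\phi(N) = \psi(N)$.

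It remains only to note the parenthetical claim: if $w \notin \sigma(\Theta(N))$ then $E\{w\} = 0$ automatically, because $E$ is the spectral measure of the normal operator $\Theta(N)$ and hence supported on $\sigma(\Theta(N))$, so $E\{w\} = E(\{w\} \cap \sigma(\Theta(N))) = E(\emptyset) = 0$. I do not anticipate a genuine obstacle in this argument --- it is essentially bookkeeping around \thref{ehgtttpre} and \thref{lre0}; the only spot requiring a little care is recognising the second spectral integral above as the point mass $g_k(w)\,E\{w\}$, which uses the finiteness of $V_{\bb R}(I)$.
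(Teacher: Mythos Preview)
Your proof is correct and follows essentially the same route as the paper: apply \thref{ehgtttpre} to the difference $\phi-\psi$, write the result via $\Psi$, and use \thref{lre0} together with $E\{w\}=0$ to see that each summand vanishes. The paper's version is simply terser, leaving the verification of the hypotheses of \thref{ehgtttpre} and the splitting of the spectral integral implicit.
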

\begin{proof}
    By \thref{ehgtttpre} there exist $g_1,\dots,g_m \in \mf B\big(\sigma(\Theta(N))\big)$, which vanish on 
    $(\sigma(\Theta(N)) \cup V_{\bb R}(I)) \setminus \{w\}$, such that 
    \[
      \phi(N) - \psi(N) = \Psi(0;g_1,\dots,g_m) = \sum_{k=1}^m \Xi_k\left( \int_{\sigma(\Theta_k(N))} g_k \, dE_k \right)
    \]
    According to \thref{lre0} together with our assumption $E\{w\} = 0$,
    this operator vanishes.
\end{proof}

\begin{remark}\thlab{rieszproj33}
For $\zeta \in V(I)\setminus \bb R^2$ or a $\zeta \in V_{\bb R}(I)$, which is isolated in 
$\sigma(\Theta(N)) \cup V_{\bb R}(I)$, 
we saw in \thref{fedela33} that $a\delta_\zeta \in \mc F_{N}$.
If $a$ is the unite $e$ in $\mc B(\zeta)$ or in $\mc A(\zeta)$, i.e.\ the coset $1+Q(\zeta)$ for $\zeta\in V(I)\setminus \bb R^2$ 
or the coset $1+(P(\zeta)\cdot Q(\zeta))$ for $\zeta\in V_{\bb R}(I)$, then 
$(e\delta_\zeta)\cdot (e\delta_\zeta) = (e\delta_\zeta)$ together with the multiplicativity of $\phi\mapsto \phi(N)$
shows that $(e\delta_\zeta)(N)$ is a projection. It is a kind of Riesz projection corresponding to $\zeta$. 

We set $\xi:=\Re \zeta, \ \eta:=\Im \zeta$ if $\zeta \in V_{\bb R}(I)$ and
$(\xi,\eta):=\zeta$ if $\zeta \in V(I)\setminus \bb R^2$.
For $\lambda \in \bb C\setminus \{\xi+i\eta\}$ and for $s(z,w):=z+iw-\lambda$ we then have
$s_N \cdot (e\delta_\zeta) = \big(s_N(\zeta)\big)\delta_\zeta$.
As $s(\xi,\eta)\neq 0$, $s_N(\zeta)$ does not belong to $P(\zeta) \supseteq Q(\zeta)$.
Therefore, it is invertible in $\mc B(\zeta)$ or in $\mc A(\zeta)$. For its inverse $b$
we obtain
\[
    s_N \cdot (e\delta_\zeta) \cdot (b\delta_\zeta) = e\delta_\zeta \,.
\]
From $s_N(N) = N - \lambda$ we derive that $(N\vert_{\ran (e\delta_\zeta)(N)} - \lambda)^{-1}
=(b\delta_\zeta)(N)\vert_{\ran (e\delta_\zeta)(N)}$
on $\ran (e\delta_\zeta)(N)$. 
In particular, $\sigma(N\vert_{\ran (e\delta_\zeta)(N)}) \subseteq \{\xi+i\eta\}$.
\end{remark}

\begin{lemma}\thlab{deth5633}
    If $\phi\in \mc F_{N}$ vanishes on
    \[
      \big(\sigma(\Theta(N)) \cup (V_{\bb R}(I) \cap \sigma(N))\big) 
	  \dot\cup \{(\alpha,\beta)\in V(I)\setminus \bb R^2: \alpha+i\beta, \bar \alpha+i\bar \beta \in \sigma(N)\} \,, 
    \]
    then $\phi(N)=0$.
\end{lemma}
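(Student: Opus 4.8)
The plan is to exploit that, under the hypothesis, $\phi$ is supported on a \emph{finite} set of points at which $N-\zeta$ is invertible, and to annihilate each one-point contribution by means of the Riesz-type projections from \thref{rieszproj33}.

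\emph{Step 1 (reduction to a single point).} First I would localise $\phi$. Since $\phi$ vanishes on all of $\sigma(\Theta(N))$, and since $V_{\bb R}(I)$ and $V(I)\setminus\bb R^2$ are finite, $\phi$ is supported on a finite set $S$ contained in $(V_{\bb R}(I)\setminus\sigma(\Theta(N)))\cup(V(I)\setminus\bb R^2)$; by hypothesis $\phi$ also vanishes at every $w\in V_{\bb R}(I)\cap\sigma(N)$ and at every $(\alpha,\beta)\in V(I)\setminus\bb R^2$ with both $\alpha+i\beta$ and $\bar\alpha+i\bar\beta$ in $\sigma(N)$. For $w\in S\cap V_{\bb R}(I)$ the relation $\phi(w)\neq 0$ forces $w\notin\sigma(\Theta(N))$; as $\sigma(\Theta(N))$ is closed and $V_{\bb R}(I)$ is finite, $w$ is then isolated in $\sigma(\Theta(N))\cup V_{\bb R}(I)$, so $\phi_w:=\phi(w)\delta_w$ belongs to $\mc F_N$ by \thref{fedela33}; likewise $\phi_\zeta:=\phi(\zeta)\delta_\zeta\in\mc F_N$ for $\zeta\in S\cap(V(I)\setminus\bb R^2)$ by \thref{fedela33}. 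Comparing values pointwise shows $\phi=\sum_{\zeta\in S}\phi_\zeta$, so by linearity of $\phi\mapsto\phi(N)$ it suffices to prove $\phi_\zeta(N)=0$ for every $\zeta\in S$.

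\emph{Step 2 (Riesz projection).} Next I would fix $\zeta\in S$ and set $\zeta_0:=\xi+i\eta$ with $(\xi,\eta):=\zeta$ if $\zeta\in V(I)\setminus\bb R^2$ and $(\xi,\eta):=(\Re\zeta,\Im\zeta)$ if $\zeta\in V_{\bb R}(I)$, and let $e_\zeta$ be the multiplicative unit of $\mc A(\zeta)$ (resp. $\mc B(\zeta)$). By \thref{rieszproj33} — applicable precisely because $\zeta\in V(I)\setminus\bb R^2$ or $\zeta\in V_{\bb R}(I)$ is isolated in $\sigma(\Theta(N))\cup V_{\bb R}(I)$ — the operator $P_\zeta:=(e_\zeta\delta_\zeta)(N)$ is a projection with $\sigma(N\vert_{\ran P_\zeta})\subseteq\{\zeta_0\}$. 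Since $\phi(\zeta)e_\zeta=\phi(\zeta)$ we have $\phi_\zeta=\phi_\zeta\cdot(e_\zeta\delta_\zeta)$ in $\mc M_N$, hence $\phi_\zeta(N)=\phi_\zeta(N)P_\zeta$ by multiplicativity of $\phi\mapsto\phi(N)$ (\thref{tu2989}). Being an element of $\{N,N^*\}''$, $P_\zeta$ commutes with $N$, so $\mc K$ is the topological direct sum of the closed $N$-invariant subspaces $\ran P_\zeta$ and $\ker P_\zeta$; consequently $\sigma(N\vert_{\ran P_\zeta})\subseteq\sigma(N)$.

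\emph{Step 3 (conclusion).} Finally I would dispose of the three cases. If $\zeta=w\in S\cap V_{\bb R}(I)$, then $w\notin\sigma(N)$, so $\zeta_0=w\notin\sigma(N)\supseteq\sigma(N\vert_{\ran P_\zeta})$, forcing $\sigma(N\vert_{\ran P_\zeta})=\emptyset$; since the spectrum of a bounded operator on a nonzero Banach space is nonempty, $\ran P_\zeta=\{0\}$, i.e. $P_\zeta=0$ and $\phi_\zeta(N)=\phi_\zeta(N)P_\zeta=0$. If $\zeta=(\alpha,\beta)\in S\cap(V(I)\setminus\bb R^2)$ with $\alpha+i\beta\notin\sigma(N)$, the same argument with $\zeta_0=\alpha+i\beta$ gives $\phi_\zeta(N)=0$. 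The only remaining possibility is $\zeta=(\alpha,\beta)$ with $\alpha+i\beta\in\sigma(N)$ but $\bar\alpha+i\bar\beta\notin\sigma(N)$; here I would pass to $\phi_\zeta^\#=\phi(\zeta)^\#\delta_{(\bar\alpha,\bar\beta)}$, which by \thref{sternop1} and \thref{fedela33} again lies in $\mc F_N$ and satisfies $\phi_\zeta^\#(N)=\phi_\zeta(N)^+$ by \thref{tu2989}, and apply the previous case to $\phi_\zeta^\#$ at $(\bar\alpha,\bar\beta)$ to obtain $\phi_\zeta^\#(N)=0$, whence $\phi_\zeta(N)=0$. Summing over $\zeta\in S$ yields $\phi(N)=0$.

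The step I expect to cost the most care is Step~1: correctly pinning down the support of $\phi$ and checking that each one-point restriction $\phi(\zeta)\delta_\zeta$ genuinely belongs to $\mc F_N$, the decisive observation being that $\phi(w)\neq 0$ at $w\in V_{\bb R}(I)$ forces $w\notin\sigma(\Theta(N))$ and hence isolatedness in $\sigma(\Theta(N))\cup V_{\bb R}(I)$, so that \thref{fedela33} is available; everything afterwards is routine spectral bookkeeping with \thref{rieszproj33}. (A variant avoiding \thref{rieszproj33} altogether: with $s(x,y):=x+iy-\zeta_0\in P(\zeta)$, a binomial expansion together with \thref{qbem} shows that a power of the coset of $s$ vanishes in $\mc A(\zeta)$, resp.\ $\mc B(\zeta)$, so $(N-\zeta_0)^k\phi_\zeta(N)=\big((s_N)^k\phi_\zeta\big)(N)=0$ for suitable $k$; since $N-\zeta_0$ is invertible, $\phi_\zeta(N)=0$ follows directly.)
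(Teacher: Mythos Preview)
Your proof is correct and follows essentially the same route as the paper: decompose $\phi$ as a finite sum of one-point functions $\phi(\zeta)\delta_\zeta$ supported where the relevant $\zeta_0$ lies in $\rho(N)$, use the Riesz-type projections $(e\delta_\zeta)(N)$ from \thref{rieszproj33} together with $\sigma(N\vert_{\ran P_\zeta})\subseteq\{\zeta_0\}$ to conclude $P_\zeta=0$, and handle the remaining non-real case by passing to adjoints. The only cosmetic differences are that the paper argues $\zeta_0\in\rho(N\vert_{\ran P_\zeta})$ directly via commutation of $P_\zeta$ with $(N-\zeta_0)^{-1}$ rather than through the inclusion $\sigma(N\vert_{\ran P_\zeta})\subseteq\sigma(N)$, and it takes adjoints at the level of the projection $(e\delta_\zeta)(N)$ rather than of $\phi_\zeta(N)$; your nilpotency variant at the end is a pleasant shortcut not in the paper.
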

\begin{proof}
    Since any $w \in V_{\bb R}(I) \setminus \sigma(N)$ is isolated
    in $\sigma(\Theta(N)) \cup V_{\bb R}(I)$, we saw in \thref{rieszproj33} that
    for 
    \[
	\zeta \in \underbrace{\big(V_{\bb R}(I) \setminus \sigma(N)\big)}_{=:Z_1} \dot\cup
	\underbrace{\{(\alpha,\beta)\in V(I)\setminus \bb R^2: \alpha+i\beta \in \rho(N)\}}_{=:Z_2}
    \]
    the expression 
    $(e\delta_\zeta)(N)$ is a bounded projection commuting with $N$. Hence,
    $(e\delta_\zeta)(N)$ also commutes with $(N - (\xi+i\eta))^{-1}$, where
    $\xi:=\Re \zeta, \ \eta:=\Im \zeta$ if $\zeta \in Z_1$ and
    $(\xi,\eta):=\zeta$ if $\zeta \in Z_2$.
    
    Consequently, $N\vert_{\ran (e\delta_\zeta)(N)}- (\xi+i\eta)$ is invertible on $\ran (e\delta_\zeta)(N)$,
    i.e.\ $\xi+i\eta \not\in \sigma(N\vert_{\ran (e\delta_\zeta)(N)})$. In
    \thref{rieszproj33} we saw $\sigma(N\vert_{\ran (e\delta_\zeta)(N)}) \subseteq \{\xi+i\eta\}$.
    Hence, $\sigma(N\vert_{\ran (e\delta_\zeta)(N)}) = \emptyset$, which is impossible for
    $\ran (e\delta_\zeta)(N) \neq \{0\}$. Thus, $(e\delta_\zeta)(N) = 0$. 
    
    For $(\xi,\eta)\in Z_3:=\{(\alpha,\beta)\in V(I)\setminus \bb R^2: \bar\alpha+i\bar\beta \in \rho(N)\}$
    one has $(\bar \xi,\bar \eta)\in Z_2$. Hence,
    \[
	0=(e\delta_{(\bar\xi,\bar\eta)})(N)^*=(e^\#\delta_{(\xi,\eta)})(N)=(e\delta_{(\xi,\eta)})(N) \,.
    \]
    Since, by our assumption, $\phi$ is supported on $Z_1\cup Z_2 \cup Z_3$, we obtain 
    \[
	\phi(N) = (\hspace{-2mm}\sum_{\zeta \in Z_1\cup Z_2 \cup Z_3} \hspace{-2mm} 
	\phi(\zeta)\delta_\zeta \hspace{+2mm})(N) = 
	\sum_{\zeta \in Z_1\cup Z_2 \cup Z_3} \phi(\zeta) (e\delta_\zeta)(N) = 0 \,. 
    \]
 \end{proof}

As a consequence of \thref{deth5633} for $\phi \in \mc F_{N}$ the operator $\phi(N)$ only depends on
$\phi$'s values on
  \begin{multline}\label{eigmeng}
\big(\sigma(\Theta(N)) \cup (V_{\bb R}(I) \cap \sigma(N))\big) \dot\cup \\ \{(\alpha,\beta)\in V(I)\setminus \bb R^2: 
      \alpha+i\beta, \bar \alpha+i\bar \beta \in \sigma(N)\} \,.
  \end{multline}
Thus, we can, and will from now on, re-define the function class $\mc F_{N}$ for our functional calculus
so that the elements $\phi$ of $\mc F_{N}$ are functions on this set
with values in $\bb C, \mc A(z)$ or $\mc B(z)$,
such that $z\mapsto \phi(z)$ is measurable and bounded on $\sigma(\Theta(N)) \setminus V_{\bb R}(I)$
and such that \eqref{fn8qw3bab} holds true for every $w \in \sigma(\Theta(N)) \cap V_{\bb R}(I)$
which is not isolated in $\sigma(\Theta(N))$.

\begin{lemma}\thlab{wannboundinv33}
    If $\phi \in \mc F_{N}$ 
    is such that $\phi(z)$ is invertible in $\bb C, \mc A(z)$ or $\mc B(z)$, respectively, for all 
    $z$ in \eqref{eigmeng}, and such that 
    $0$ does not belong to the closure of 
    $\phi\big(\sigma(\Theta(N))\setminus V_{\bb R}(I) \big)$, then
    $\phi(N)$ is a boundedly invertible operator on $\mc K$ with $\phi^{-1}(N)$
    as its inverse.
\end{lemma}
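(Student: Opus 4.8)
The plan is to reduce the assertion to the multiplicativity of the functional calculus $\phi\mapsto\phi(N)$ from \thref{tu2989}, once one knows that the pointwise inverse $\phi^{-1}\colon z\mapsto\phi(z)^{-1}$ again belongs to $\mc F_N$. So there will really be only two steps: first produce $\phi^{-1}$ as an element of $\mc F_N$, then recognize $\phi^{-1}(N)$ as a two-sided inverse of $\phi(N)$.

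For the first step I would apply \thref{einduF33}. Its hypotheses are exactly the two assumptions of the present lemma — invertibility of $\phi(z)$ at every relevant point and $0\notin\overline{\phi\big(\sigma(\Theta(N))\setminus V_{\bb R}(I)\big)}$ — so the only thing to check is that \thref{einduF33} survives the re-definition of $\mc F_N$ that was carried out just before this statement, i.e.\ the restriction of the functions' domain to \eqref{eigmeng}. This is immediate: measurability and boundedness of $z\mapsto\phi(z)^{-1}$ on $\sigma(\Theta(N))\setminus V_{\bb R}(I)$ are unaffected, and the growth estimate \eqref{fn8qw3bab} for $\phi^{-1}$ at a non-isolated $w\in\sigma(\Theta(N))\cap V_{\bb R}(I)$ is obtained by the very same decomposition of $\phi^{-1}(z)-\phi(w)^{-1}\vert_{x=\Re z,\,y=\Im z}$ into the two summands \eqref{dfbz45} and \eqref{fght33} and the same $O(\chi_w(z))$ bounds as in the proof of \thref{einduF33}. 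Hence $\phi^{-1}\in\mc F_N$.

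For the second step I would compute the product $\phi\cdot\phi^{-1}$ in the $*$-algebra $\mc M_N$: pointwise it is $1$ on $\sigma(\Theta(N))\setminus V_{\bb R}(I)$ and the multiplicative unit of $\mc A(z)$, resp.\ $\mc B(z)$, at the remaining points, so by \thref{bweuh30po} it equals $\mathds{1}_N$, where $\mathds{1}\in\bb C[x,y]$ denotes the constant polynomial $1$. Since $\phi\mapsto\phi(N)$ is a $*$-homomorphism (\thref{tu2989}, which descends to the re-defined $\mc F_N$ because $\phi(N)$ depends only on the values of $\phi$ on \eqref{eigmeng}) satisfying $p_N(N)=p(A,B)$ for polynomials, it follows that
\[
    \phi(N)\,\phi^{-1}(N) = (\phi\cdot\phi^{-1})(N) = \mathds{1}_N(N) = \mathds{1}(A,B) = I_{\mc K}\,,
\]
and symmetrically $\phi^{-1}(N)\,\phi(N)=I_{\mc K}$. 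Thus $\phi(N)$ is boundedly invertible with bounded inverse $\phi^{-1}(N)$, which is precisely the claim.

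The only genuine obstacle is the first step — confirming that \thref{einduF33} is applicable to, and that its conclusion is stable under, the re-definition of $\mc F_N$ to functions on \eqref{eigmeng}. Once $\phi^{-1}\in\mc F_N$ is secured, everything else is a formal consequence of the multiplicativity and the polynomial normalization already recorded in \thref{tu2989}.
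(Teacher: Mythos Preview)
Your proposal is correct and follows essentially the same route as the paper: apply \thref{einduF33} to get $\phi^{-1}\in\mc F_N$, then use the multiplicativity of \thref{tu2989} together with \thref{bweuh30po} to obtain $\phi(N)\,\phi^{-1}(N)=\phi^{-1}(N)\,\phi(N)=\mathds{1}_N(N)=I_{\mc K}$. The only cosmetic difference is how the domain re-definition is handled: instead of re-checking that the argument of \thref{einduF33} goes through on the smaller domain \eqref{eigmeng}, the paper simply extends $\phi$ back to the full set $\big(\sigma(\Theta(N))\cup V_{\bb R}(I)\big)\dot\cup(V(I)\setminus\bb R^2)$ by declaring $\phi(z)=e$ at every point outside \eqref{eigmeng}, so that \thref{einduF33} applies verbatim; this works because the added points are either in $V(I)\setminus\bb R^2$ or are isolated in $\sigma(\Theta(N))\cup V_{\bb R}(I)$, hence carry no growth condition.
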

\begin{proof}
    We think of $\phi$ as a function on $\big(\sigma(\Theta(N)) \cup V_{\bb R}(I) \big)\dot\cup (V(I)\setminus \bb R^2)$
    by setting $\phi(z)=e$ for all $z$ not belonging to \eqref{eigmeng}.
    Then all assumptions of \thref{einduF33} are satisfied. Hence $\phi^{-1} \in \mc F_{N}$, and we conclude from
    \thref{tu2989} and \thref{bweuh30po} that
    \[
	\phi^{-1}(N) \phi(N) = \phi(N) \phi^{-1}(N) = (\phi\cdot \phi^{-1})(N) = \mathds{1}_N(N) = I_{\mc K} \,. 
    \]
\end{proof}

\begin{corollary}\thlab{sigmaN}
    $\sigma(N)$ equals to
  \begin{multline}\label{specform}
    \sigma(\Theta(N)) \cup (V_{\bb R}(I) \cap \sigma(N))
    \cup \\ \{\alpha + i\beta :  (\alpha,\beta)\in V(I)\setminus \bb R^2, 
      \alpha+i\beta, \bar \alpha+i\bar \beta \in \sigma(N)\} \,.
  \end{multline}
In particular, $\sigma(N)\setminus \sigma(\Theta(N))$ is finite. 
\end{corollary}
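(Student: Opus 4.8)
The plan is to prove the two inclusions between $\sigma(N)$ and the set $S$ on the right-hand side of \eqref{specform}, and then to read off the finiteness statement. For $S\subseteq\sigma(N)$, the two pieces $V_{\bb R}(I)\cap\sigma(N)$ and $\{\alpha+i\beta:(\alpha,\beta)\in V(I)\setminus\bb R^2,\ \alpha+i\beta,\bar\alpha+i\bar\beta\in\sigma(N)\}$ are contained in $\sigma(N)$ by construction, so only $\sigma(\Theta(N))\subseteq\sigma(N)$ needs an argument. Here I would use that $N$ commutes with $TT^+$, see \eqref{zfv6lkc}, so that for $\lambda\in\rho(N)$ also $(N-\lambda)^{-1}$ lies in $(TT^+)'$; since $\Theta$ is an algebra homomorphism on $(TT^+)'$ sending $I_{\mc K}$ to $I_{\mc H}$, the operator $\Theta(N)-\lambda=\Theta(N-\lambda)$ is then invertible with inverse $\Theta((N-\lambda)^{-1})$, whence $\rho(N)\subseteq\rho(\Theta(N))$.

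For the reverse inclusion, let $\lambda\in\bb C\setminus S$ and consider $s(x,y):=x+iy-\lambda\in\bb C[x,y]$, so that $s(A,B)=N-\lambda$ and, by \thref{tu2989}, $s_N(N)=N-\lambda$. The idea is to apply \thref{wannboundinv33} to $\phi=s_N$, which lies in $\mc F_N$ by \thref{gehzuFab}. On $\sigma(\Theta(N))\setminus V_{\bb R}(I)$ one has $s_N(z)=z-\lambda$, and since $\sigma(\Theta(N))$ is compact and, by the first inclusion, does not contain $\lambda$, these values stay bounded away from $0$, so the closure hypothesis of \thref{wannboundinv33} holds. Moreover $s_N(z)$ is invertible at every $z$ in \eqref{eigmeng}: at $z\in\sigma(\Theta(N))\setminus V_{\bb R}(I)$ because $s_N(z)=z-\lambda\neq 0$; at $w\in V_{\bb R}(I)\cap\sigma(N)$ because $w-\lambda\neq 0$ (as $w\in S$ while $\lambda\notin S$), so $s_N(w)\notin P(w)$ and hence is invertible in $\mc A(w)$ as in \thref{rieszproj33}; and at $(\alpha,\beta)\in V(I)\setminus\bb R^2$ with $\alpha+i\beta,\bar\alpha+i\bar\beta\in\sigma(N)$ because $\alpha+i\beta-\lambda\neq 0$, so $s_N((\alpha,\beta))$ is invertible in $\mc B((\alpha,\beta))$. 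Then \thref{wannboundinv33} yields that $N-\lambda=s_N(N)$ is boundedly invertible, i.e.\ $\lambda\in\rho(N)$. Together with the first step this gives \eqref{specform}.

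Finally, since $I$ is zero-dimensional its variety $V(I)$ is the finite set $\{a_1,\dots,a_l\}$ recalled above, so $V_{\bb R}(I)\cap\sigma(N)$ and $\{\alpha+i\beta:(\alpha,\beta)\in V(I)\setminus\bb R^2,\ \alpha+i\beta,\bar\alpha+i\bar\beta\in\sigma(N)\}$ are finite; by \eqref{specform} their union equals $\sigma(N)\setminus\sigma(\Theta(N))$, which is therefore finite. The step I expect to require the most care is the second inclusion: one must confirm that the polynomial $s$ really produces an element of the (re-defined) $\mc F_N$ on the domain \eqref{eigmeng}, that invertibility of $s_N$ at the finitely many extra points of $V(I)$ is correctly characterised by non-vanishing of $s$ there (using $\sqrt{P(a)\cdot Q(a)}=P(a)$, cf.\ \thref{qbem} and \thref{rieszproj33}), and that the closure hypothesis in \thref{wannboundinv33} is genuinely guaranteed by $\lambda\notin\sigma(\Theta(N))$ together with compactness of $\sigma(\Theta(N))$.
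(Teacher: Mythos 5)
Your proposal is correct and follows essentially the same route as the paper: the inclusion $\sigma(\Theta(N))\subseteq\sigma(N)$ from the unital homomorphism property of $\Theta$, and the reverse inclusion by applying \thref{wannboundinv33} to $s_N$ with $s(x,y)=x+iy-\lambda$, checking invertibility of $s_N$ pointwise on \eqref{eigmeng} and the closure condition via $\lambda\notin\sigma(\Theta(N))$. The finiteness assertion follows, as you say, from the finiteness of $V(I)$ for the zero-dimensional ideal $I$.
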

\begin{proof}
    Since $\Theta$ is a homomorphism, we have $\sigma(\Theta(N))\subseteq \sigma(N)$.
    Hence, \eqref{specform} is contained in $\sigma(N)$.
    For the converse, consider the polynomial $s(z,w) = z+iw - \lambda$ for a $\lambda$
    not belonging to \eqref{specform}. We conclude that for any
    \[
      \zeta\in (V_{\bb R}(I) \cap \sigma(N)) \cup \{(\alpha,\beta)\in V(I)\setminus \bb R^2: 
      \alpha+i\beta, \bar \alpha+i\bar \beta \in \sigma(N)\} 
    \]
    the polynomial
    $s$ does not belong to $P(\zeta) \supseteq Q(\zeta)$. Hence, $s_N(\zeta)$ is invertible
    $\mc A(\zeta)$ or $\mc B(\zeta)$.
    Clearly, $s_N(\zeta)\neq 0$ for $\zeta\in \sigma(\Theta(N)) \setminus V_{\bb R}(I)$.
    Finally, $0$ does not belong to the closure of
    \[
	s_N\big(\sigma(\Theta(N))\setminus V_{\bb R}(I) \big) = s(\sigma(\Theta(N))\setminus V_{\bb R}(I))
	  \subseteq \sigma(\Theta(N)) - \lambda \,.
    \]
    Applying \thref{wannboundinv33}, we see that $s_N(N)=(N-\lambda)$ is invertible.
\end{proof}

\begin{remark}\thlab{fhwr3435}
    We set $K_r:=V_{\bb R}(I) \cap \sigma(N)$,
    \[
	Z:=\{(\alpha,\beta)\in V(I)\setminus \bb R^2: \alpha+i\beta, \bar \alpha+i\bar \beta \in \sigma(N)\} \,,
    \]
    and $K_i := \{\alpha + i\beta : (\alpha,\beta)\in Z\}$.
    Using \thref{sigmaN} we could re-define once more the functions $\phi\in\mc F_N$
    as functions $\phi$ on $\sigma(N)$ such that 
    \begin{enumerate}
    \item $\phi$ is complex valued, bounded and measurable on $\sigma(N)\setminus (K_r \cup K_i)$,
    \item $\phi(\zeta)\in \mc A(\zeta)$ for $\zeta \in K_r \setminus K_i$,
    \item $\phi(\zeta)\in \varprod_{(\alpha,\beta) \in Z, \alpha + i\beta = \zeta} \mc A(\zeta)$ for $\zeta \in K_i \setminus K_r$,
    \item $\phi(\zeta)\in \mc A(\zeta) \times \varprod_{(\alpha,\beta) \in Z, \alpha + i\beta = \zeta} \mc A(\zeta)$ for $\zeta \in K_r \cap K_i$;
    \item for a $w\in K_r$, which is not isolated in $\sigma(N)$, we have
    \[
	\phi(z) - p(\Re z,\Im z) = O(\chi_w(z)) \ \ \text{ as } \ \ 
	\sigma(N) \setminus (K_r \cup K_i) \ni z \to w \,,
    \]
    where $p$ is a representative of $\phi(w)$ for $w\in K_r \setminus K_i$ and 
    $p$ is a representative of the first entry of $\phi(w)$ for $w\in K_r \cap K_i$.
    \end{enumerate}
\end{remark}

\section{Special cases of definitizable operators}

Unitary and selfadjoint operators are special cases of
normal operators on Hilbert spaces as well as on Krein spaces. 
We will show how some well-known facts on definitizable selfadjoint
or unitary operators on a Krein space $\mc K$ can easily be obtain 
from the previously obtained results.

\subsection{Selfadjoint definitizable operators}

An operator $N\in B(\mc K)$ is by definition selfadjoint if $N=N^{+}$. Obviously,
$N\in B(\mc K)$ is selfadjoint if and only if $N$ is normal and satisfies
$p(A,B) = 0$, where $A=\frac{N+N^{+}}{2}, B=\frac{N-N^{+}}{2i}$ and
\[
    p(x,y) = y \in \bb R[x,y] \,.
\]
Therefore, according to \thref{definidef} any selfadjoint operator on a Krein space 
is definitizable normal, and the ideal $I$ generated by all real
definitizing polynomials contains $p(x,y)=y$.
Since the ideal generated by $p(x,y)=y$ is not zero-dimensional, 
the zero-dimensionality of $I$ implies the existence of at least one 
real definitizing polynomial of the form
\begin{equation}\label{ndydivba}
    y \cdot s(x,y) + t(x) \ \ \text{ with } \ \ s(x,y)\in \bb C[z,w], \ t(x)\in \bb C[x]\setminus \{0\} \,.
\end{equation}

\begin{proposition}
    The ideal $I$ is zero-dimensional if and only if 
    there exists a $t \in \bb R[x]\setminus \{0\}$ such that
    $[t(A)u,u] \geq 0, \ u \in \mc K$, i.e.\ $N=A$ is definitizable in 
    the classical sense; see \cite{KaPr2014}.
\end{proposition}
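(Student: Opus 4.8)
The plan is to exploit the fact that $N=N^{+}$ forces the imaginary part $B=\frac{N-N^{+}}{2i}$ to vanish. Writing an arbitrary $p(x,y)\in\bb C[x,y]$ in the form $p(x,y)=y\cdot s(x,y)+t(x)$ with $t(x):=p(x,0)$, the commutation of $A$ and $B$ together with $B=0$ gives $p(A,B)=t(A)$. Hence $p$ is a real definitizing polynomial for $N$ precisely when $p\in\bb R[x,y]$ and $[t(A)u,u]\geq 0$ for all $u\in\mc K$, where $t=p(\cdot,0)$; moreover, as already observed in the text preceding the statement, the polynomial $p(x,y)=y$ is trivially definitizing, so that $y\in I$.

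For the direction ``$\Leftarrow$'', I would suppose $t\in\bb R[x]\setminus\{0\}$ satisfies $[t(A)u,u]\geq 0$ for all $u\in\mc K$. Then $t(x)$, viewed inside $\bb R[x,y]$, is a real definitizing polynomial, so $I\supseteq\langle t(x),y\rangle$. Since $\bb C[x,y]/\langle t(x),y\rangle\cong\bb C[x]/\langle t(x)\rangle$ has dimension $\deg t<\infty$, the ideal $\langle t(x),y\rangle$ is zero-dimensional; as enlarging an ideal cannot increase the dimension of the quotient, $I$ is zero-dimensional as well.

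For the direction ``$\Rightarrow$'', I would argue by contraposition of the same bookkeeping. If every real definitizing polynomial $r$ had $r(x,0)=0$, i.e.\ lay in $\langle y\rangle$, then $I\subseteq\langle y\rangle$; combined with $y\in I$ this would give $I=\langle y\rangle$, which is not zero-dimensional because $\bb C[x,y]/\langle y\rangle\cong\bb C[x]$. Thus, under the assumption that $I$ is zero-dimensional, there is a real definitizing polynomial $r$ with $t:=r(\cdot,0)\in\bb R[x]\setminus\{0\}$, and then $r(A,B)=t(A)$ yields $[t(A)u,u]\geq 0$ for all $u\in\mc K$, i.e.\ $A=N$ is definitizable in the classical sense.

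The only point requiring a moment's care --- hardly a real obstacle --- is the monotonicity remark that $J\subseteq I$ and $\dim\bb C[x,y]/J<\infty$ imply $\dim\bb C[x,y]/I\leq\dim\bb C[x,y]/J<\infty$, which is immediate from the canonical surjection $\bb C[x,y]/J\to\bb C[x,y]/I$. Everything else is a direct consequence of $B=0$ and of the membership $y\in I$.
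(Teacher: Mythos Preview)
Your proof is correct and follows essentially the same approach as the paper: both arguments hinge on the decomposition $r(x,y)=y\cdot s(x,y)+t(x)$ with $t(x)=r(x,0)$, the membership $y\in I$, and the dichotomy between $I=\langle y\rangle$ (not zero-dimensional) and $I\supseteq\langle t,y\rangle$ with $t\neq 0$ (zero-dimensional). The paper phrases the forward direction via the auxiliary ideal $I_x=\{t_r:r\in I\}$ and the observation preceding the proposition, whereas you argue the contrapositive directly on the generating set of real definitizing polynomials; this is a minor organisational difference, not a genuinely different route.
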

\begin{proof}
    Any $r(x,y) \in \bb C[z,w]$ can we written as
    $r(x,y) = y\cdot s_r(x,y) + t_r(x)$ with unique 
    $s_r(x,y)\in \bb C[z,w], \, t_r(x)\in \bb C[x]$.
    Hence, $r\in I$ if and only if $t_r(x) \in I$.
    The set of $I_x:=\{t_r : r\in I\}$ forms an ideal in $\bb C[x]$.
    If $I_x$ is the zero ideal, then $I=y\cdot\bb C[x,y]$ is not zero-dimensional.
    
    If $I_x\neq \{0\}$, then, applying the polynomial division, we see that 
    $\dim \bb C[x]/I_x <\infty$. This implies the zero-dimensionality of $I$.
    If $r(x,y)$ is a real definitizing polynomial as in \eqref{ndydivba}, then 
    \[
	[t(A)u,u] = [r(A,B)u,u] \geq 0, \ u \in \mc K \,,
    \]
    i.e.\ $t(x)$ is a definitizing polynomial.
\end{proof}

Assume that $N\in B(\mc K)$ is selfadjoint and that the ideal $I$ generated by all real
definitizing polynomials is zero-dimensional. Consequently, we can apply the functional 
calculus developed in Section \ref{Funcalzero}. 
From $p(x,y)=y\in I$ we conclude
\[
    a=(a_x,a_y) \in V(I) \Rightarrow a_y=p(a) = 0 \,.
\]
Hence, the elements of $V_{\bb R}(I)$ are contained in $\bb R$, and
$(\xi,\eta) \in V(I) \setminus \bb R^2$ yields $\eta=0$.
Moreover,
with $N$ also $\Theta(N)$ is selfadjoint in the Hilbert space
$\mc H$; see \thref{defNspaces} and \eqref{thetaVdef}. In particular,
$\sigma(\Theta(N)) \subseteq \bb R$. From \thref{sigmaN} we derive that
$\sigma(N)$ is contained in $\bb R$ up to finitely many points which are located 
in $\bb C \setminus \bb R$ symmetric with respect to $\bb R$.

\subsection{Unitary definitizable operators}

An operator $N\in B(\mc K)$ is by definition unitary if $N^{+} N = N N^{+} = I_{\mc K}$. Obviously,
$N\in B(\mc K)$ is unitary if and only if $N$ is normal and satisfies
$p(A,B) = 0$, where $A=\frac{N+N^{+}}{2}, B=\frac{N-N^{+}}{2i}$ and
\[
    p(x,y) = (x+iy)(x-iy) - 1 = x^2 + y^2 - 1 \in \bb R[x,y] \,.
\]
Therefore, according to \thref{definidef} any unitary operator on a Krein space 
is definitizable normal, and the ideal $I$ generated by all real
definitizing polynomials always contains $p(x,y)$.
Since the ideal generated by $p$ is not zero-dimensional, 
the zero-dimensionality of $I$ implies the existence 
a definitizing polynomial different from $p$.

\begin{remark}
  If, for example, there exists a polynomial $a\in \bb C[z]\setminus \{0\}$
  such that $[a(N)u,u] \geq 0, \ u \in \mc K$, then the ideal $J$
  generated by $a$ (considered as a polynomial in $\bb C[z,w]$) and 
  $b(z,w)=zw-1$ in $\bb C[z,w]$ is zero-dimensional. Indeed, it is easy to see that
  the set $V(J)$ of common zeros of $a$ and $b$ is finite, which by \cite{CLO2}, page 39,
  implies zero-dimensionality.
  Since $c(z,w)\mapsto c(x+iy,x-iy)$ constitutes an isomorphism from 
  $\bb C[z,w]$ onto $\bb C[x,y]$, also the ideal generated by 
  $a(x+iy)$ and $p(x,y)$ in $\bb C[x,y]$ is zero-dimensional. Hence, 
  the same is true for $I$, and we can apply the functional 
  calculus developed Section \ref{Funcalzero}. 
\end{remark}

Assume that $N\in B(\mc K)$ is unitary and that the ideal $I$ generated by all real
definitizing polynomials is zero-dimensional. Consequently, we can apply the functional 
calculus developed in Section \ref{Funcalzero}. From $p\in I$ we conclude
\[
    a \in V(I) \Rightarrow p(a) = 0 \,.
\]
Hence, the elements of $V_{\bb R}(I)$ are contained in $\bb T$, and
$(\xi,\eta) \in V(I) \setminus \bb R^2$ yields 
\[
    (\xi+i\eta)\overline{(\bar\xi+i\bar\eta)} = \xi^2+\eta^2=1.
\]
Moreover,
with $N$ also $\Theta(N)$ is unitary in the Hilbert space
$\mc H$; see \thref{defNspaces} and \eqref{thetaVdef}. In particular,
$\sigma(\Theta(N)) \subseteq \bb T$. From \thref{sigmaN} we derive that
$\sigma(N)$ is contained in $\bb T$ up to finitely many points which are located 
in $\bb C \setminus \bb T$ symmetric with respect to $\bb T$.

\section{Transformations of definitizable normal operators}

In this final section we examine, whether basic transformations, such as $\alpha N, N + \beta I_{\mc K}, N^{-1}$
with $\alpha, \beta \in\bb C, \ \alpha\neq 0$, 
of definitizable normal operators $N$ are again definitizable, and how the corresponding ideals $I$
behave.

For $\beta\in\bb C$ it is easy to see that $p(x,y)$ is a real definitizing polynomial
for $N$ if and only if $p(x-\Re \beta,y-\Im \beta)$ is real definitizing for $N + \beta I_{\mc K}$.
Since $r(x,y) \mapsto r(x-\Re \beta,y-\Im \beta)$ is a ring automorphism on $\bb C[x,y]$,
the respective ideals $I$, corresponding to $N$ and $N + \beta I_{\mc K}$, are zero-dimensional, or not, 
at the same time.
    
Similarly, $p(x,y)$ is a real definitizing polynomial
for $N$ if and only if $p(x \Re \frac{1}{\alpha} - y \Im \frac{1}{\alpha}, x\Im \frac{1}{\alpha} + y \Re \frac{1}{\alpha})$
is real definitizing for $\alpha N$. Also $r(x,y) \mapsto 
r(x \Re \frac{1}{\alpha} - y \Im \frac{1}{\alpha}, x\Im \frac{1}{\alpha} + y \Re \frac{1}{\alpha})$ 
is a ring automorphism on $\bb C[x,y]$. Hence, the ideal $I$ corresponding to $N$ is zero-dimensional if and only if
the ideal $I$ corresponding to $\alpha N$ is zero-dimensional.

For the inverse $N^{-1}$ the situation is more complicated. We formulate two
results that we will need. 
The first assertion is straight forward to verify. We omit its proof.

\begin{lemma}\thlab{trafos2}
    The mapping $\Phi: p(x,y) \mapsto p(\frac{z+w}{2},\frac{z-w}{2i})$ from $\bb C[x,y]$ to
    $\bb C[z,w]$ is an isomorphism, where $p$ is real, i.e.\ $p(\bar x, \bar y) = \overline{p(x,y)}$, 
    if and only if $\overline{\Phi(p)(z,w)} = \Phi(p)(\bar w,\bar z)$. 
\end{lemma}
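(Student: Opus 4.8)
The plan is to exhibit an explicit two-sided inverse for $\Phi$ and then to track how $\Phi$ interacts with the relevant conjugation involutions on the two polynomial rings. First I would observe that $\Phi$ is nothing but the $\bb C$-algebra substitution homomorphism $\bb C[x,y]\to\bb C[z,w]$ determined by $x\mapsto \frac{z+w}{2}$, $y\mapsto \frac{z-w}{2i}$, so it automatically respects addition, multiplication and scalar multiplication. Next I would write down the candidate inverse $\Psi:\bb C[z,w]\to\bb C[x,y]$, the substitution homomorphism with $z\mapsto x+iy$, $w\mapsto x-iy$. Since a $\bb C$-algebra homomorphism out of a polynomial ring is determined by the images of the generators, to check $\Psi\circ\Phi=\mathrm{id}_{\bb C[x,y]}$ and $\Phi\circ\Psi=\mathrm{id}_{\bb C[z,w]}$ it suffices to evaluate on generators: $\Psi(\Phi(x))=\frac{(x+iy)+(x-iy)}{2}=x$, $\Psi(\Phi(y))=\frac{(x+iy)-(x-iy)}{2i}=y$, and symmetrically $\Phi(\Psi(z))=z$, $\Phi(\Psi(w))=w$. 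Hence $\Phi$ is an isomorphism.

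For the ``real'' equivalence I would introduce on $\bb C[z,w]$ the conjugate-linear involution $q\mapsto q^\dagger$ with $q^\dagger(z,w):=\overline{q(\bar w,\bar z)}$ (conjugate the coefficients and swap the two variables), and recall from \thref{definidefrem} the involution $p\mapsto p^\#$, $p^\#(x,y)=\overline{p(\bar x,\bar y)}$, on $\bb C[x,y]$; by definition $p$ is real precisely when $p^\#=p$. The key computation is that $\Phi$ intertwines these involutions, i.e.\ $\Phi(p^\#)=\Phi(p)^\dagger$. Writing $p=\sum_{a,b}c_{ab}x^ay^b$, one has $\Phi(p^\#)(z,w)=\sum_{a,b}\bar c_{ab}\big(\tfrac{z+w}{2}\big)^a\big(\tfrac{z-w}{2i}\big)^b$, whereas $\Phi(p)^\dagger(z,w)=\overline{\sum_{a,b}c_{ab}\big(\tfrac{\bar w+\bar z}{2}\big)^a\big(\tfrac{\bar w-\bar z}{2i}\big)^b}=\sum_{a,b}\bar c_{ab}\big(\tfrac{w+z}{2}\big)^a\big(\tfrac{w-z}{-2i}\big)^b$, and using $\overline{1/(2i)}=-1/(2i)$ the factor $\big(\tfrac{w-z}{-2i}\big)^b$ equals $\big(\tfrac{z-w}{2i}\big)^b$, so the two expressions coincide.

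Finally I would combine the two parts: since $\Phi$ is injective, $p$ is real $\iff p^\#=p \iff \Phi(p^\#)=\Phi(p) \iff \Phi(p)^\dagger=\Phi(p)$, and taking complex conjugates of both sides shows that $\Phi(p)^\dagger=\Phi(p)$ is literally the identity $\overline{\Phi(p)(z,w)}=\Phi(p)(\bar w,\bar z)$ (for all $z,w\in\bb C$). The only point calling for any care is the bookkeeping of complex conjugates in the intertwining identity, specifically the sign flip $\overline{1/(2i)}=-1/(2i)$, which is exactly what converts the $z\leftrightarrow w$ swap into the stated relation; the rest is formal substitution.
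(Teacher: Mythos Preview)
Your argument is correct. The paper actually omits the proof of this lemma entirely, merely noting that it is straightforward to verify; your write-up supplies exactly the natural verification (explicit inverse $\Psi:z\mapsto x+iy$, $w\mapsto x-iy$, plus the intertwining $\Phi(p^\#)=\Phi(p)^\dagger$ via the sign identity $\overline{1/(2i)}=-1/(2i)$), so there is nothing substantive to compare.
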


Obviously, for a normal $N=A+iB$ and $p(x,y)\in \bb C[x,y]$ we have
\begin{equation}\label{Phiwitz}
    p(A,B) = \Phi(p)(N,N^+) \,.
\end{equation}

For a polynomial $q\in\bb C[z,w]\setminus\{0\}$ let $d(q)$ be the maximum of
    the $z$-degree of $q$ and the $w$-degree of $q$. Moreover, we set
\[
    \varpi(q)(z,w):= (zw)^{d(q)} q(\frac{1}{z},\frac{1}{w}) \in \bb C[z,w] \,.
\]

\begin{lemma}\thlab{trafos3}
    If $I=\langle q_1,\dots,q_m \rangle$ is zero-dimensional with 
    polynomials $q_1,\dots,q_m$ such that $\overline{q_j(z,w)} = q_j(\bar w,\bar z)$, then
    the ideal $\langle \varpi(q_1),\dots,\varpi(r_m) \rangle$ is also zero-dimensional.
\end{lemma}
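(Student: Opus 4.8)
The plan is to reduce the assertion to the well-known fact, already invoked in this paper (\cite{CLO2}, p.~39), that an ideal in $\bb C[z,w]$ whose variety is a finite set is zero-dimensional. So the whole task becomes: show that $V(J)$ is finite, where $J:=\langle \varpi(q_1),\dots,\varpi(q_m)\rangle$. The idea is that the common zeros of the $\varpi(q_j)$ are, away from the coordinate axes, just the coordinatewise inverses $(z,w)\mapsto(1/z,1/w)$ of the common zeros of the $q_j$, hence of points of the finite set $V(I)$; the only extra work is to control what happens on $\{z=0\}\cup\{w=0\}$.

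First I would extract the role of the symmetry hypothesis. Writing $q_j=\sum_{k,l}c^{(j)}_{k,l}z^kw^l$, the relation $\overline{q_j(z,w)}=q_j(\bar w,\bar z)$ is equivalent to $\overline{c^{(j)}_{k,l}}=c^{(j)}_{l,k}$ for all $k,l$; thus the support of $q_j$ is symmetric under interchanging the two exponents, which forces $\deg_z q_j=\deg_w q_j$, and hence both equal $d(q_j)=:d_j$. Expanding, $\varpi(q_j)(z,w)=(zw)^{d_j}q_j(\tfrac1z,\tfrac1w)=\sum_{k,l}c^{(j)}_{k,l}z^{d_j-k}w^{d_j-l}$, which is genuinely a polynomial because $k,l\le d_j$ throughout the support. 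Restricting to $z=0$ kills all terms except those with $k=d_j$, so $\varpi(q_j)(0,w)=\sum_l c^{(j)}_{d_j,l}w^{d_j-l}$, and this is a \emph{non-zero} univariate polynomial precisely because $\deg_z q_j=d_j$ guarantees some $c^{(j)}_{d_j,l}\neq 0$; symmetrically $\varpi(q_j)(z,0)$ is a non-zero polynomial in $z$. I expect this to be the crux: without the symmetry hypothesis $\varpi(q_j)(0,\cdot)$ can vanish identically (e.g. for $q_j$ of strictly larger $w$-degree than $z$-degree), and then the statement is actually false, so the hypothesis must be used exactly here.

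Next I would prove $V(J)$ finite by splitting $\bb C^2$ into the torus and the two axes. If $(a,b)\in V(J)$ with $ab\neq 0$, then $0=\varpi(q_j)(a,b)=(ab)^{d_j}q_j(\tfrac1a,\tfrac1b)$ and $(ab)^{d_j}\neq 0$ give $q_j(\tfrac1a,\tfrac1b)=0$ for every $j$, i.e.\ $(\tfrac1a,\tfrac1b)\in V(I)$; since $I$ is zero-dimensional $V(I)$ is finite, and inversion is injective on $(\bb C\setminus\{0\})^2$, so only finitely many such $(a,b)$ occur. If $(0,b)\in V(J)$, then $\varpi(q_1)(0,b)=0$, so $b$ lies in the finite zero set of the non-zero polynomial $w\mapsto\varpi(q_1)(0,w)$ from the previous paragraph; thus $V(J)\cap(\{0\}\times\bb C)$ is finite, and likewise $V(J)\cap(\bb C\times\{0\})$. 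Hence $V(J)$ is finite.

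Finally I would conclude by the cited criterion: finiteness of $V(J)$ implies $\dim\bb C[z,w]/J<\infty$, i.e.\ $\langle\varpi(q_1),\dots,\varpi(q_m)\rangle$ is zero-dimensional. Apart from the degree-symmetry observation, everything else is routine bookkeeping (checking that $\varpi(q_j)$ is a polynomial and that its restriction to each axis is cut out by a non-trivial equation), so I would keep those verifications brief.
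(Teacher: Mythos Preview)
Your proposal is correct and follows essentially the same route as the paper: both show that $V(\langle\varpi(q_1),\dots,\varpi(q_m)\rangle)$ is finite by splitting into the cases $\zeta\eta\neq 0$ (where inversion sends the point into the finite set $V(I)$) and the coordinate axes (where the symmetry hypothesis $\overline{c_{k,l}}=c_{l,k}$ is used to see that each $\varpi(q_j)$ restricts to a non-zero univariate polynomial), and then invoke the finiteness criterion from \cite{CLO2}. The only cosmetic difference is that you first deduce $\deg_z q_j=\deg_w q_j$ from the coefficient symmetry, whereas the paper argues directly with $d(q_j)$ and $b_{k,l}=\bar b_{l,k}$; the substance is identical.
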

\begin{proof}
    Let $(\zeta,\eta) \in V(\varpi(q_1),\dots,\varpi(r_m))$. For $\zeta\neq 0 \neq \eta$ we conclude
    $q_j(\frac{1}{\zeta},\frac{1}{\eta}) = 0, \ j=1,\dots, m$, and in turn 
    $(\zeta,\eta) \in \{ (z,w) \in (\bb C\setminus\{0\})^2: (\frac{1}{z},\frac{1}{w}) \in V(I) \}$.
    
    Assume that $\eta=0$ and $\zeta\neq 0$. If $q_j(z,w) = \sum_{k,l=0}^{d(q_j)} b_{k,l} z^k w^l$, then
    $\overline{q_j(z,w)} = q_j(\bar w,\bar z)$ yields $b_{k,l} = \bar b_{l,k}$,
    and we have $\varpi(q_j)(z,w) = \sum_{k,l=0}^{d(q_j)} b_{d(q_j)-k,d(q_j)-l} z^k w^l$. According to the choice
    of $d(q_j)$ and by $b_{k,l} = \bar b_{l,k}$ the polynomial
    \[
	\rho_j(z):=\varpi(q_j)(z,0) =  \sum_{k=0}^{d(q_j)} b_{d(q_j)-k,d(q_j)} z^k 
    \]
    is non-zero and satisfies $\rho_j(\zeta)=0$, i.e.\ $(\zeta,\eta) \in \rho_j^{-1}(\{0\})\times\{0\}$.
    
    From $\overline{q_j(z,w)} = q_j(\bar w,\bar z)$ we conclude $\rho_j(\bar w) = \overline{\varpi(q_j)(0,w)}$.
    Hence, $\zeta=0$ and $\eta\neq 0$ yields 
    $(\zeta,\eta) \in \{0\}\times \overline{\rho_j^{-1}(\{0\})}$.
    
    In any case $(\zeta,\eta)$ is contained in
    \begin{multline*}
	\{(0,0)\} \cup \{ (z,w) \in (\bb C\setminus\{0\})^2: (\frac{1}{z},\frac{1}{w}) \in V(I) \} \cup \\
		\cup \bigcap_{j=1,\dots,m} \rho_j^{-1}(\{0\})\times\{0\}
		\cup \bigcap_{j=1,\dots,m} \{0\}\times \overline{\rho_j^{-1}(\{0\})} \,.
    \end{multline*}
    Consequently, $V(\varpi(q_1),\dots,\varpi(r_m))$ is finite, and in turn $\langle\varpi(q_1),\dots,\varpi(r_m) \rangle$ is zero-dimensional; see
    \cite{CLO2}, page 39.
\end{proof}

\begin{proposition}\thlab{invdefbar}
    Let $N$ be normal and bijective on the Krein space $\mc K$.
    If $p(x,y)$ is real definitizing for $N$, then 
    $\Phi^{-1}\Big(\varpi\big(\Phi(p)\big)\Big)$ is definitizing for $N^{-1}$.
    Moreover, if the ideal $I$ generated by all real definitizing $p(x,y)$ for $N$ is zero-dimensional, then
    also the ideal generated by all real definitizing polynomials for $N^{-1}$ is zero-dimensional.
\end{proposition}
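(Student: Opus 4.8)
I would prove the two assertions separately. For the first, set $q:=\Phi(p)$ and $d:=d(q)$. Since $N$ is bijective and normal, $N^{-1}$ is again normal, $(N^{-1})^+=(N^+)^{-1}$, and $N^{-1}(N^+)^{-1}=(N^+N)^{-1}=(NN^+)^{-1}$. Writing $A',B'$ for the real and imaginary parts of $N^{-1}$ and applying \eqref{Phiwitz} to $N^{-1}$ gives $\Phi^{-1}\big(\varpi(q)\big)(A',B')=\varpi(q)\big(N^{-1},(N^+)^{-1}\big)$; since $N^{-1}$ and $(N^+)^{-1}$ commute and are invertible, I may substitute them into the rational identity $\varpi(q)(z,w)=(zw)^{d}q(\frac1z,\frac1w)$ to obtain
\[
  \Phi^{-1}\big(\varpi(q)\big)(A',B')=\big(N^{-1}(N^+)^{-1}\big)^{d}q(N,N^+)=(NN^+)^{-d}\,p(A,B),
\]
using \eqref{Phiwitz} once more for $q(N,N^+)=p(A,B)$. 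Since $p\neq 0$ and both $\Phi$ and $\varpi$ preserve non-vanishing, $\Phi^{-1}(\varpi(q))\neq 0$, so the first assertion reduces to showing $[(NN^+)^{-d}p(A,B)v,v]\ge 0$ for all $v\in\mc K$.

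To see this, I would first record that $(NN^+)^{j}p(A,B)\ge 0$ for every integer $j\ge 0$: since $p(A,B)$ is a polynomial in $A$ and $B$ it commutes with $N=A+iB$, with $N^+=A-iB$, and hence with $NN^+$ and all its powers, so $(NN^+)^{j}p(A,B)=N^{j}p(A,B)(N^+)^{j}$ and, by \eqref{defalgl} applied to $(N^+)^{j}v$,
\[
  \big[(NN^+)^{j}p(A,B)v,v\big]=\big[p(A,B)(N^+)^{j}v,(N^+)^{j}v\big]\ge 0 .
\]
Putting $S:=(NN^+)^{-d}$, which is selfadjoint and commutes with $P_0:=(NN^+)^{d}p(A,B)$, we get $(NN^+)^{-d}p(A,B)=SP_0S$, whence $[(NN^+)^{-d}p(A,B)v,v]=[P_0Sv,Sv]\ge 0$ by the case $j=d$. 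Moreover, since $q$ satisfies $\overline{q(z,w)}=q(\bar w,\bar z)$ (by \thref{trafos2}, as $p$ is real), a short computation gives $\overline{\varpi(q)(z,w)}=\varpi(q)(\bar w,\bar z)$, so by \thref{trafos2} the polynomial $\Phi^{-1}(\varpi(q))$ is in fact real, and hence a real definitizing polynomial for $N^{-1}$.

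For the second assertion, choose real definitizing polynomials $p_1,\dots,p_m$ generating $I$, and set $q_j:=\Phi(p_j)$; then $\langle q_1,\dots,q_m\rangle=\Phi(I)$ is zero-dimensional because $\Phi$ is an isomorphism, and $\overline{q_j(z,w)}=q_j(\bar w,\bar z)$ by \thref{trafos2}. By \thref{trafos3} the ideal $\langle\varpi(q_1),\dots,\varpi(q_m)\rangle$ is zero-dimensional, hence so is $\langle\Phi^{-1}(\varpi(q_1)),\dots,\Phi^{-1}(\varpi(q_m))\rangle$ (its image under the isomorphism $\Phi^{-1}$). By the first part each $\Phi^{-1}(\varpi(q_j))$ is a real definitizing polynomial for $N^{-1}$, so this zero-dimensional ideal is contained in the ideal $I'$ generated by all real definitizing polynomials for $N^{-1}$; since $\bb C[x,y]/I'$ is then a quotient of the finite-dimensional $\bb C[x,y]/\langle\Phi^{-1}(\varpi(q_1)),\dots,\Phi^{-1}(\varpi(q_m))\rangle$, it is finite-dimensional, i.e.\ $I'$ is zero-dimensional.

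The step I expect to be the main obstacle is the positivity of $(NN^+)^{-d}p(A,B)$: it does not follow directly from $p(A,B)\ge 0$ together with selfadjointness of $NN^+$, since a product of a selfadjoint and a positive operator on a Krein space need not be positive, and the negative power obstructs a naive square-factorisation when $d$ is odd. The auxiliary fact $(NN^+)^{j}p(A,B)\ge 0$ is exactly what is needed: it lets one absorb two negative powers symmetrically into the selfadjoint $S$ and leaves the genuinely positive factor $(NN^+)^{d}p(A,B)$. Everything else is routine bookkeeping with $\Phi$ and with dimensions of quotient rings.
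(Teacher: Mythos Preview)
Your proof is correct and follows essentially the same strategy as the paper's: apply \eqref{Phiwitz} to $N^{-1}$, use the rational identity $\varpi(q)(z,w)=(zw)^d q(1/z,1/w)$ at the commuting invertible operators $N^{-1},(N^+)^{-1}$, and then deduce positivity; the second assertion is handled identically via \thref{trafos2} and \thref{trafos3}.

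The only difference worth noting is in the positivity step, which you flag as the ``main obstacle'' and handle via the auxiliary fact $(NN^+)^j p(A,B)\ge 0$ together with the symmetric factorization $SP_0S$. The paper does this in a single line: using commutativity it writes
\[
  \big[(NN^+)^{-d}p(A,B)\,v,v\big]=\big[p(A,B)\,(N^{-1})^{d}v,\,(N^{-1})^{d}v\big]\ge 0,
\]
i.e.\ it factors $(NN^+)^{-d}=((N^+)^{-1})^{d}(N^{-1})^{d}$ and moves one factor across the inner product by taking adjoints. Your $S P_0 S$ trick is of course a valid alternative, but the detour through nonnegative powers $j\ge 0$ is unnecessary: the negative power already splits symmetrically as $(N^{-d})^+\,I\,(N^{-d})$, so no parity issue arises and no intermediate case $j=d$ is needed.
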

\begin{proof}
    Let $p(x,y)$ be real definitizing for $N$. 
    By \thref{trafos2} we have $\overline{\Phi(p)(z,w)} = \Phi(p)(\bar w,\bar z)$, and in turn
    $\overline{\varpi(\Phi(p))(z,w)} = \varpi(\Phi(p))(\bar w,\bar z)$. 
    We write $\Phi(p)(z,w) = \sum_{k,l=0}^{d(\Phi(p))} b_{k,l} z^k w^l$, and consequently
    $\varpi(\Phi(p))(z,w) = \sum_{k,l=0}^{d(\Phi(p))} b_{d(\Phi(p))-k,d(\Phi(p))-l} z^k w^l$. 
    
    By \eqref{Phiwitz} for $u\in\mc K$ we have
    \begin{align*}
	[\Phi^{-1}\Big(\varpi\big(\Phi(p)\big) & (\Re N^{-1}, \Im N^{-1}) u,u] = 
	[\varpi(\Phi(p))(N^{-1},N^{-+}) u,u] \\ & = 
	[\sum_{k,l=0}^{d(\Phi(p))} b_{d(\Phi(p))-k,d(\Phi(p))-l} (N^{-1})^k (N^{-+})^l u,u] \\ & = 
	[\Phi(p)(N,N^+) \, (N^{-1})^{d(\Phi(p))} u,(N^{-1})^{d(\Phi(p))} \, u] \\ & = [p(A,B) \, (N^{-1})^{d(\Phi(p))} u,(N^{-1})^{d(\Phi(p))} \, u] \geq 0 \,.
    \end{align*}
    Hence, $\Phi^{-1}\Big(\varpi\big(\Phi(p)\big)\Big)$ is real definitizing for $N^{-1}$.
    Finally, if $I$ is zero-dimensional and generated by real definitizing $p_1,\dots,p_m$, then 
    $\Phi(I)=\langle \Phi(p_1),\dots,\Phi(p_m) \rangle$ is zero-dimensional in $\bb C[z,w]$. According to \thref{trafos3}
    $\langle \varpi\big(\Phi(p_1)\big),\dots,\varpi\big(\Phi(p_m)\big) \rangle$, and hence
    also $\langle \Phi^{-1}\Big(\varpi\big(\Phi(p_1)\big)\Big),\dots,\Phi^{-1}\Big(\varpi\big(\Phi(p_m)\big)\Big) \rangle$ is zero-dimensional.
    Since its generators are real definitizing for $N^{-1}$ also
    the ideal generated by all real definitizing polynomials for $N^{-1}$ is zero-dimensional.
\end{proof}

\end{document}